\numberwithin{equation}{section}
\newtheorem{theorem}{Theorem}[section]
\newtheorem{cor}[theorem]{Corollary}
\newtheorem{prop}[theorem]{Proposition}
\newtheorem{question}{Question}
\theoremstyle{definition}
\newtheorem{definition}[theorem]{Definition}
\theoremstyle{remark}
\newtheorem{remark}[theorem]{Remark}
\newcommand{\A}{\mathcal{A}}
\author[Q. Menet]{Quentin Menet}
\address[Q. Menet]{Service de Probabilit\'e et Statistique, D\'epartement de Math\'ematique\\ Universit\'{e} de Mons\\ Place du Parc 20\\ 7000 Mons, Belgium}
\email{quentin.menet@umons.ac.be}
\author[D. Papathanasiou]{Dimitris Papathanasiou}
\address[D. Papathanasiou]{Sabanci University Tuzla Campus, Orta Mahalle, \"{U}niversite Cadesi No:27 Tuzla, 34956 Istanbul, Turkey}
\email{d.papathanasiou@sabanciuniv.edu}
\thanks{The first author is a Research Associate of the Fonds de la Recherche Scientifique - FNRS}
\subjclass[2020]{47A16}
\keywords{Weighted shift, frequent hypercyclicity, chaos}
\begin{document}

\title[Dynamics of weighted shifts on $\ell^p$-sums and $c_0$-sums]{Dynamics of weighted shifts on $\ell^p$-sums and $c_0$-sums}

\maketitle

\begin{abstract}
We investigate a generalization of weighted shifts where each weight $w_k$ is replaced by an operator $T_k$ going from a Banach space $X_k$ to another one $X_{k-1}$. We then look if the obtained shift operator $B_{(T_k)}$ defined on the $\ell^p$-sum (or the $c_0$-sum) of the spaces $X_k$ is hypercyclic, weakly mixing, mixing, chaotic or frequently hypercyclic. We also compare the dynamical properties of $T$ and of the corresponding shift operator $B_T$. Finally, we interpret some classical criteria in Linear Dynamics in terms of the dynamical properties of a shift operator.
\end{abstract}

\section{Introduction}

An important family of operators in Linear Dynamics is given by the family of weighted shifts on $\ell^p$ ($1\le p<\infty$) or on $c_0$. These operators can help to get some interesting examples and counterexamples but can also help to better understand some dynamical properties through their characterizations for the weighted shifts. 

In this paper, we will focus on the five most important notions in linear dynamics: hypercyclicity, weak mixing, mixing, chaos (in the sense of Devaney) and frequent hypercyclicity. All these properties are completely characterized in terms of weights for the weighted shifts on $\ell^p$ or on $c_0$ (see \cite{BaRu15}, \cite{CoSa}, \cite{Gro00}, \cite{Salas}).

These five notions are defined as follows:

\begin{definition}
Let $X$ be a Banach space and $T\in L(X)$.
\begin{enumerate}
\item $T$ is hypercyclic if there exists $x\in X$ such that $\text{Orb}(x,T):=\{T^n x: n\ge 0\}$ is dense in $X$.
\item $T$ is weakly mixing if $T\oplus T$ is hypercyclic on $X\oplus X$.
\item $T$ is mixing if for every non-empty open sets $U$, $V$ in $X$, the set $N_T(U,V):=\{n\ge 0:T^n U\cap V\ne \emptyset\}$ is cofinite.
\item $T$ is chaotic (in the sense of Devaney) if $T$ is hypercyclic and $T$ possesses a dense set of periodic points.
\item $T$ is  frequently hypercyclic if there exists $x\in X$ such that for every non-empty open set $U$ in $X$, the set $N_T(x,U):=\{n\ge 0:T^nx\in U\}$ has a positive lower density.
\end{enumerate}
\end{definition}

More information on these notions can be found in the two books \cite{BM09, GrPe11}. In general, we have the following implications between these five notions and no other implication is true (see \cite[Section 4]{Me1} for more details). 
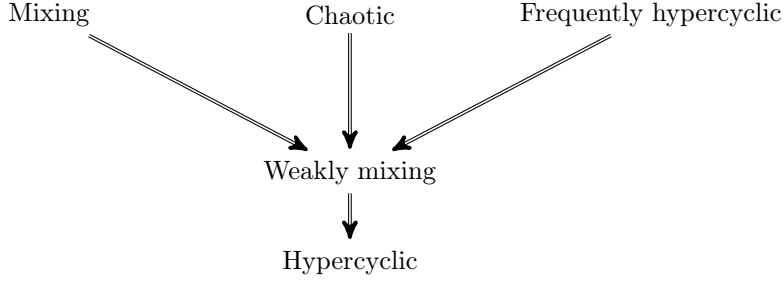
\begin{figure}[H]
    \begin{center}
      \begin{tikzpicture}[xscale=1,yscale=1.5,>=stealth',shorten >=1pt]
       
        \path (6,4) node[] (q0) {Frequently hypercyclic};
        \path (2,4) node[] (q1) {Chaotic};
        \path (-2,4) node[] (q2) {Mixing};

        \path (2,2.6) node[] (q3) {Weakly mixing};
        \path (2,1.8) node[] (q4) {Hypercyclic};

        \draw[double,arrows=->] (q0) -- (q3);
        \draw[double,arrows=->] (q1) -- (q3);
        \draw[double,arrows=->] (q2) -- (q3);
        \draw[double,arrows=->] (q3) -- (q4);

        \end{tikzpicture}
    \end{center}
    \caption{Links between different notions in Linear Dynamics}
\label{fig}
\end{figure}

However, if we restrict ourselves to the study of weighted shifts on $\ell^p$ or on $c_0$, more implications are true. For instance, a hypercyclic weighted shift is always weakly mixing and a chaotic weighted shift is always frequently hypercyclic and mixing. In fact, on $\ell^p$, a weighted shift $B_w$ is chaotic if and only if it is frequently hypercyclic, and on $c_0$, a weighted shift $B_w$ is chaotic if and only if it is mixing. These links are depicted in the following figures and we know that no other implication is true (see \cite{BaGr07}, \cite{BaRu15}, \cite{CoSa}, \cite{Gro00}, \cite{Salas}).
\begin{figure}[H]
\subcaptionbox{Links for weighted shifts on $\ell^p$}[.4\textwidth]{
      \begin{tikzpicture}[xscale=1,yscale=1.5,>=stealth',shorten >=1pt]
       
        \path (0.8,4.2) node[] (q0) {Frequently hypercyclic};
        \path (-4,4.2) node[] (q1) {Chaotic};
        \path (-2,3.4) node[] (q2) {Mixing};

        \path (-2,2.6) node[] (q3) {Weakly mixing};
        \path (-2,1.7) node[] (q4) {Hypercyclic};

		\draw[double,arrows=<->] (q0) -- (q1);
        \draw[double,arrows=->] (q1) -- (q2);
        \draw[double,arrows=->] (q0) -- (q2);
        \draw[double,arrows=->] (q2) -- (q3);
        \draw[double,arrows=<->] (q3) -- (q4);
        \end{tikzpicture}
}
\hspace*{2.2cm}
\subcaptionbox{Links for weighted shifts on $c_0$}[.4\textwidth]{
      \begin{tikzpicture}[xscale=1,yscale=1.5,>=stealth',shorten >=1pt]
       
        \path (6,4.2) node[] (q0) {Mixing};
        \path (2,4.2) node[] (q1) {Chaotic};
        \path (4,3.4) node[] (q2) {Frequently hypercyclic};

        \path (4,2.6) node[] (q3) {Weakly mixing};
        \path (4,1.7) node[] (q4) {Hypercyclic};

		\draw[double,arrows=<->] (q0) -- (q1);
        \draw[double,arrows=->] (q1) -- (q2);
        \draw[double,arrows=->] (q0) -- (q2);
        \draw[double,arrows=->] (q2) -- (q3);
        \draw[double,arrows=<->] (q3) -- (q4);

        \end{tikzpicture}
}
\end{figure}
In view of the importance of weighted shifts in linear dynamics, several generalizations of these operators have already been introduced and investigated such as weighted pseudoshifts \cite{Gro00} or more recently weighted shifts on trees (see \cite{GrPa21}, \cite{JJS12}, \cite{Mar17}). In this paper we consider a version of weighted shifts defined on an infinite product of Banach spaces where the weights are replaced by operators. \\

To be able to deal simultaneously with unilateral and bilateral shifts, we will denote by $J$ the set $\mathbb{N}$ (resp. $\mathbb{Z}$) and by $J^*$ the set $\mathbb{N}\backslash\{1\}$ (resp. $\mathbb{Z}$). 

\begin{definition}\label{defshift}
Let $(X_k)_{k\in J}$ be a sequence of sets.
Given a sequence of maps $T_k:X_k\to X_{k-1}$ with $k\in J^*$, we define $B_{(T_k)_{k\in J^*}}$ on $\prod_{k\in J} X_k$ by 
\[B_{(T_k)_{k\in J^*}} (x_i)_{i\in J}=(T_{k+1}x_{k+1})_{k\in J}.\]
If we have $T_k=T$ and $X_k=X$ for all $k\in J$, we will use the notation $B_T$ for the map $B_{(T_k)_{k\in J^*}}$.
\end{definition}

In this paper, we will focus on the case where each $T_k$ is an operator, each space $X_k$ is a Banach space and the map $B_{(T_k)_{k\in J^*}}$ gives us an operator on $\ell^p((X_k)_k,J)$ or on $c_0((X_k)_k,J)$. We recall that
\[\ell^p((X_k)_k,J)=\{(x_k)_{k\in J}\in \prod_{k\in J} X_k: \sum_{k\in J} \|x_k\|^p<\infty\}\]
and
\[c_0((X_k)_k,J)=\{(x_k)_{k\in J}\in \prod_{k\in J} X_k: \lim_{|k|\to \infty} \|x_k\|=0\}.\]
Moreover, if each $X_k$ is equal to $X$, we will use the notations $\ell^p(X,J)$ and $c_0(X,J)$.\\

Notice that we get back the usual weighted shifts by considering $X_k=\mathbb{K}$ and $T_k=w_k \text{Id}$ where $\text{Id}$ is the identity operator. We can also get the weighted shifts on trees by considering for $X_k$ the $\ell^p$-space (or the $c_0$-space) supported by the $k$-th generation. This idea to replace weights by operators has been considered simultaneously and independently by Carvalho, Darji and Varandas \cite{CaDaVa}. In this one, they consider the additional assumption that each operator $T_k$ is an invertible operator on some Banach space $X$ and investigate other dynamical properties : the shadowing property and the generalized hypercbolicity.\\

The main goal of this paper consists in providing some answers to the following natural questions : 
\begin{question}
How can we characterize that an operator $B_{(T_k)}$ is hypercyclic, weakly mixing, mixing, chaotic or frequently hypercyclic in terms of the sequence $(T_k)$?
\end{question}
\begin{question}
Do the links depicted in Figures (A) and (B) for the weighted shifts $B_w$ can be extended to the operators $B_{(T_k)}$ on $\ell^p((X_k)_k,J)$ and $c_0((X_k)_k,J)$?
\end{question}
\begin{question}
Which dynamical properties an operator $T$ can inherit from $B_T$? Which dynamical properties the operator $B_T$ can inherit from $T$? Is there a quasi-conjugacy between $T$ and $B_T$? 
\end{question}

The answers at these questions are a mix of expected results and surprises.\\

For instance, in section~\ref{hyp}, we show as expected that $B_{(T_k)_{k\in J^*}}$ is hypercyclic on $\ell^p((X_k)_k,J)$ or on $c_0((X_k)_k,J)$ if and only if $B_{(T_k)_{k\in J^*}}$ is weakly mixing, like in Figures (A) and (B). However there are important differences between the unilateral case ($J=\mathbb{N}$) and the bilateral case ($J=\mathbb{Z}$). In the unilateral case, we show that if $T$ is hypercyclic on $X$ then $B_T$ is hypercyclic on $\ell^p(X,\mathbb{N})$ and on $c_0(X,\mathbb{N})$. Notice that $B_T$ can be hypercyclic on $\ell^p(X,\mathbb{N})$ or on $c_0(X,\mathbb{N})$ even if $T$ is not hypercyclic on $X$; it suffices to consider $T=2 \text{Id}$ and $X=\mathbb{K}$. On the other hand, in the bilateral case, we have the nice following equivalence : $B_T$ is hypercyclic on $\ell^p(X,\mathbb{Z})$ or $c_0(X,\mathbb{Z})$ if and only if $T$ is weakly mixing on $X$. However, since there exist hypercyclic operators that are not weakly mixing (see \cite{RoRe}), we deduce that in the bilateral case, it is possible that $T$ is hypercyclic on $X$ but $B_T$ is not hypercyclic on $\ell^p(X,\mathbb{Z})$ or on $c_0(X,\mathbb{Z})$!

In section~\ref{MCF}, we characterize when $B_{(T_k)}$ is mixing and we deduce that in the bilateral case, $T$ is mixing if and only if $B_T$ is mixing on $\ell^p(X,\mathbb{Z})$ or on $c_0(X,\mathbb{Z})$. In view of the previous results, one can wonder if in the bilateral case, $T$ is quasi-conjugate to $B_T$. By investigating the chaos, we will remark that the answer \emph{depends on $p$}. Indeed, $T$ is not quasi-conjugate to $B_T$ on $\ell^p(X,\mathbb{Z})$ for $p>1$ or on $c_0(X,\mathbb{Z})$  but $T$ is always quasi-conjugate to $B_T$ on $\ell^1(X,\mathbb{Z})$. We also show that if $B_{(T_k)_{k\in J^*}}$ is chaotic on $\ell^p((X_k)_k,J)$ (resp. $c_0((X_k)_k,J)$) then $B_{(T_k)_{k\in J^*}}$ is mixing and frequently hypercyclic. However, unlike the case of weighted shifts, there exists a mixing operator $B_T$ on $c_0(X,\mathbb{N})$ such that $B_T$ is not chaotic, and there exists a frequently hypercyclic operator $B_T$ on $\ell^p(X,\mathbb{N})$ such that $B_T$ is not chaotic. We can therefore show that the relations between the five investigated dynamical properties for the family of operators $(B_{(T_k)})$ are as depicted below and no other implication is true. 

\begin{figure}[H]

\begin{subfigure}{.4\textwidth}
      \begin{tikzpicture}[xscale=1,yscale=1.5,>=stealth',shorten >=1pt]
       
        \path (-2,5) node[] (q1) {Chaotic};
        \path (-2,4.2) node[] (q0) {Frequently hypercyclic};
        \path (-2,3.4) node[] (q2) {Mixing};

        \path (-2,2.6) node[] (q3) {Weakly mixing};
        \path (-2,1.8) node[] (q4) {Hypercyclic};

		\draw[double,arrows=->] (q1) -- (q0);
        \draw[double,arrows=->] (q0) -- (q2);
        \draw[double,arrows=->] (q2) -- (q3);
        \draw[double,arrows=<->] (q3) -- (q4);

        \end{tikzpicture}
    \caption*{Links for $B_{(T_k)}$ on $\ell^p((X_k)_k,J)$}
\label{fig2}
\end{subfigure}
\hspace*{0.2cm}
\begin{subfigure}{.4\textwidth}
      \begin{tikzpicture}[xscale=1,yscale=1.5,>=stealth',shorten >=1pt]
       
        \path (6,3.4) node[] (q2) {Mixing};
        \path (4,4.2) node[] (q1) {Chaotic};
        \path (2,3.4) node[] (q0) {Frequently hypercyclic};

        \path (4,2.6) node[] (q3) {Weakly mixing};
        \path (4,1.7) node[] (q4) {Hypercyclic};

		\draw[double,arrows=->] (q1) -- (q0);
        \draw[double,arrows=->] (q1) -- (q2);
         \draw[double,arrows=->] (q0) -- (q3);        
         \draw[double,arrows=->] (q2) -- (q3);
        \draw[double,arrows=<->] (q3) -- (q4);

        \end{tikzpicture}
    \caption*{Links for $B_{(T_k)}$ on $c_0((X_k)_k,J)$}
\label{fig3}
\end{subfigure}
\end{figure}

In section~\ref{Criteria}, we interpret some classical hypercyclicity criteria on $T$ in terms of $B_T$. More precisely, we remark that some version of the Kitai Criterion is equivalent to require that $B_T$ is chaotic on $c_0(X,\mathbb{Z})$ and some version of the Frequent Hypercyclicity Criterion is equivalent to require that $B_T$ is chaotic on $\ell^1(X,\mathbb{Z})$. We then investigate in depth the link between different versions of these criteria and the dynamical properties of $B_T$.

\section{Hypercyclicity and weakly mixing}\label{hyp}
We recall that we will denote by $J$ the set $\mathbb{N}$ (resp. $\mathbb{Z}$) and by $J^*$ the set $\mathbb{N}\backslash\{1\}$ (resp. $\mathbb{Z}$). Let $(X_k,\|\cdot\|_k)_{k\in J}$ be Banach spaces and $T_k\in L(X_k, X_{k-1})$, $k\in J^*$.  We will also let 
\[T_{n,n}=I\in L(X_n,X_n)  \quad \text{and} \quad T_{k,n}=T_{k+1}\circ \cdots \circ T_{n}\in L(X_n,X_k) \quad \text{for $k< n$}.\]
We will finally denote by $B_k(x,r)$ the open ball in $X_k$ centered at $x$ with radius $r$ and by $B(x,r)$ the open ball in the considered product space $\ell^p((X_k)_k,J)$ or $c_0((X_k)_{k},J)$. The shift $B_{(T_k)}$ (as defined in Definition~\ref{defshift}) will give us an operator on $\ell^p((X_k)_k,J)$ or $c_0((X_k)_{k},J)$ as soon as $\sup_{k\in J^*}\|T_k\|<\infty$:

\begin{prop}
 The following assertions are equivalent:
\begin{enumerate}[\upshape (1)]
\item $B_{(T_k)}$ maps $\ell^p((X_k)_{k},J)$ (resp. $c_0((X_k)_{k},J)$) in itself;
\item ${\sup_{k\in J^*}\|T_k\|<\infty}$;
\item $B_{(T_k)}$ is a continuous operator on $\ell^p((X_k)_{k},J)$ (resp. $c_0((X_k)_{k},J)$). 
\end{enumerate}
Moreover, we have
$\|B_{(T_k)}\|=\sup_{k\in J^*}\|T_k\|$.
\end{prop}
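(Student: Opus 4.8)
The plan is to set $M:=\sup_{k\in J^*}\|T_k\|\in[0,\infty]$ and to establish the cycle of implications $(2)\Rightarrow(3)\Rightarrow(1)\Rightarrow(2)$, after which the norm identity follows by pairing the upper bound coming from $(2)\Rightarrow(3)$ with a matching lower bound. The implication $(3)\Rightarrow(1)$ requires no work: a continuous operator on $\ell^p((X_k)_k,J)$ (resp. $c_0((X_k)_k,J)$) is in particular a well-defined self-map of that space.

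For $(2)\Rightarrow(3)$ I would argue coordinatewise. If $M<\infty$ and $x=(x_k)_{k\in J}$ lies in the space, then each coordinate of $B_{(T_k)}x$ satisfies $\|T_{k+1}x_{k+1}\|\le\|T_{k+1}\|\,\|x_{k+1}\|\le M\|x_{k+1}\|$. In the $\ell^p$ case, raising to the $p$-th power and summing over $k\in J$ gives $\|B_{(T_k)}x\|^p\le M^p\|x\|^p$; in the $c_0$ case the same pointwise bound forces $\|T_{k+1}x_{k+1}\|\to 0$ as $|k|\to\infty$ and yields $\|B_{(T_k)}x\|\le M\|x\|$. In both cases $B_{(T_k)}x$ is again in the space and $B_{(T_k)}$ is continuous with $\|B_{(T_k)}\|\le M$.

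The heart of the argument, and the step I expect to be the main obstacle, is $(1)\Rightarrow(2)$, which I would prove by contraposition. Assuming $M=\infty$, and using that each individual $\|T_k\|$ is finite, I would extract a sequence of \emph{distinct} indices $k_1,k_2,\dots\in J^*$ with $\|T_{k_j}\|\ge 2^{j}$ (choosing them moreover so that $|k_j|\to\infty$, which can always be arranged), together with unit vectors $u_j\in X_{k_j}$ satisfying $\|T_{k_j}u_j\|\ge\tfrac12\|T_{k_j}\|$. The delicate point is the bookkeeping of indices: the coordinate of $B_{(T_k)}x$ at position $k_j-1$ equals $T_{k_j}x_{k_j}$, and these positions are again pairwise distinct. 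I would then place mass only on the coordinates $k_j$, tuning the scalars so that membership in the domain and failure of membership in the range are separated. For $\ell^p$, setting $x_{k_j}=\|T_{k_j}\|^{-1}u_j$ gives $\sum_j\|x_{k_j}\|^p\le\sum_j 2^{-jp}<\infty$, while the image has norm at least $\tfrac12$ in each of the distinct slots $k_j-1$, so $\sum_j\|T_{k_j}x_{k_j}\|^p=\infty$; for $c_0$, setting $x_{k_j}=2\|T_{k_j}\|^{-1}u_j\to 0$ keeps $x$ in $c_0$ while $\|T_{k_j}x_{k_j}\|\ge 1$ for all $j$. Either way $x$ lies in the space but $B_{(T_k)}x$ does not, contradicting $(1)$.

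Finally, for the norm identity it remains to show $\|B_{(T_k)}\|\ge M$ once the equivalent conditions hold. For each $k\in J^*$ and $\varepsilon>0$ I would choose a unit vector $u\in X_k$ with $\|T_ku\|>\|T_k\|-\varepsilon$ and test $B_{(T_k)}$ against the vector whose only nonzero coordinate is $u$ in slot $k$; this vector has norm $1$ and image of norm $\|T_ku\|$, so $\|B_{(T_k)}\|\ge\|T_k\|-\varepsilon$, whence $\|B_{(T_k)}\|\ge\|T_k\|$ for every $k$ and $\|B_{(T_k)}\|\ge M$. Combined with $\|B_{(T_k)}\|\le M$ from $(2)\Rightarrow(3)$, this gives $\|B_{(T_k)}\|=M$. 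As an alternative route to $(1)\Rightarrow(2)$ one could instead observe that $B_{(T_k)}$ has closed graph, since coordinatewise limits are preserved by the continuity of each $T_{k+1}$, so the closed graph theorem makes it bounded, and then the single-coordinate test already forces $\|T_k\|\le\|B_{(T_k)}\|<\infty$ for all $k$.
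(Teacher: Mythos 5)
Your proof is correct, and it diverges from the paper's in the one step that carries real content. For $(2)\Rightarrow(3)$, the norm lower bound via single-coordinate test vectors, and the resulting identity $\|B_{(T_k)}\|=\sup_k\|T_k\|$, you and the paper do exactly the same thing. The difference is in getting from $(1)$ to boundedness: the paper argues softly, noting that the coordinate projections $P_n$ are continuous so that $B_{(T_k)}$ has closed graph, and invokes the closed graph theorem to pass directly from $(1)$ to $(3)$. You instead prove $(1)\Rightarrow(2)$ by contraposition with an explicit witness: distinct indices $k_j$ with $\|T_{k_j}\|\ge 2^j$ and $|k_j|\to\infty$, near-norming unit vectors $u_j$, and scalars tuned so that $x$ lies in the space while the image coordinates $T_{k_j}x_{k_j}$, sitting in the pairwise distinct slots $k_j-1$, force $B_{(T_k)}x$ out of it. The bookkeeping you flag as delicate all checks out (each output coordinate receives exactly one contribution, the summability/vanishing estimates are right, and $|k_j|\to\infty$ is indeed arrangeable since any finite set of indices has bounded norms). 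What your route buys is an elementary, self-contained argument that exhibits concretely how unboundedness of the weights destroys the self-map property, at the cost of some index management; what the paper's route buys is brevity, at the cost of leaning on the closed graph theorem (and hence completeness). You even note the paper's argument as your alternative, so you have both in hand.
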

\begin{proof}
We perform the proof for $\ell^p((X_k)_{k},J)$ (the case of $c_0((X_k)_{k},J)$ being similar). 

If $\sup_{k\in J^*}\|T_k\|=K<\infty$, then for any $x\in \ell^p((X_k)_{k},J)$, we have
\[\|B_{(T_k)}x\|^p=\sum_{k\in J^*}\|T_kx_k\|_{k-1}^p\le \sum_{k\in J^*}\|T_k\|^p \|x_k\|_k^p\le K^p \|x\|^p.\]
The map $B_{(T_k)}$ is thus continuous and $\|B_{(T_k)}\|\le\sup_{k\in J^*}\|T_k\|$.

On the other hand, if $B_{(T_k)}$ is a continuous operator on $\ell^p((X_k)_{k},J)$ then for any $k\in J^*$, we have
\[\|T_k\|=\sup_{\substack{x_k\in X_k,\\ \|x_k\|_k\le 1}}\|T_kx_k\|_{k-1}\le \sup_{\substack{x\in \ell^p((X_k)_{k},J),\\ \|x\|\le 1}}\|B_{(T_k)}x\|=\|B_{(T_k)}\|,\]
since $\|(0,\dots,0,x_k,0,\dots)\|=\|x_k\|_k$.
The assertion $(2)$ is thus equivalent to $(3)$ and
\[\|B_{(T_k)}\|=\sup_{k\ge 1}\|T_k\|.\]

We also remark that each coordinate map $P_n:\ell^p((X_k)_{k},J)\to X_n$ defined by $P_n((x_k)_{k\in J})=x_n$ is continuous. Therefore, if $B_{(T_k)}$ maps $\ell^p((X_k)_{k},J)$ in itself, we deduce that 
$B_{(T_k)}$ is continuous by using the closed graph theorem. Finally, since it is obvious that $(3)$ implies $(1)$, we get the desired result.
\end{proof}

It is well-known that if a hypercyclic operator $T$ possesses a dense set of vectors whose orbits tend to $0$ then $T$ is weakly mixing (see \cite{Gri}). We can therefore deduce that in the unilateral case, every hypercyclic operator $B_{(T_k)}$ on $\ell^p((X_k)_{k},\mathbb{N})$ or $c_0((X_k)_{k},\mathbb{N})$ is actually weakly mixing. To this end, we first investigate in terms of $(T_k)_{k\in \mathbb{N}\backslash\{1\}}$ when $B_{(T_k)}$ is hypercyclic (or equivalently weakly mixing).

\begin{prop}\label{unilhyp}
Let $B_{(T_k)}$ be an operator on $\ell^p((X_k)_{k},\mathbb{N})$ (resp. $c_0((X_k)_{k},\mathbb{N})$).
The following assertions are equivalent:
\begin{enumerate}[\upshape (1)]
\item $B_{(T_k)}$ is hypercyclic;
\item $B_{(T_k)}$ is weakly mixing;
\item For every $N\geq 1$, for every $(U_j)_{1\leq j\leq N}$, where each set $U_j$ is a non-empty open subsets of $X_j$, there exists $n\ge 1$ such that for every $1\leq j\leq N$
$$
T_{j,j+n}B_{j+n}(0,1)\cap U_j \neq \emptyset.
$$
\end{enumerate}
\end{prop}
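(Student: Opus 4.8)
The plan is to prove the cycle $(2)\Rightarrow(1)\Rightarrow(3)\Rightarrow(1)$ together with $(1)\Rightarrow(2)$, so that all three assertions become equivalent. The implication $(2)\Rightarrow(1)$ is immediate, since weak mixing implies hypercyclicity. For $(1)\Rightarrow(2)$ I would invoke the criterion recalled just above the statement (see \cite{Gri}): the finitely supported vectors are dense in $\ell^p((X_k)_k,\mathbb{N})$ (resp. $c_0((X_k)_k,\mathbb{N})$), and if $x$ is supported on $\{1,\dots,N\}$ then $B_{(T_k)}^n x=0$ for every $n\ge N$, so each such orbit tends to $0$; hence any hypercyclic $B_{(T_k)}$ is automatically weakly mixing. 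This reduces the real work to the equivalence $(1)\Leftrightarrow(3)$, which I would handle through Birkhoff transitivity, writing $B:=B_{(T_k)}$ throughout.

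For $(1)\Rightarrow(3)$, fix $N$ and nonempty open sets $U_j\subseteq X_j$, $1\le j\le N$. Since the coordinate maps $P_j$ are continuous, $V:=\bigcap_{j=1}^N P_j^{-1}(U_j)$ is a nonempty open subset of the product space (it contains the finitely supported vector with coordinates chosen in the $U_j$), and $U:=B(0,1)$ is nonempty open. A hypercyclic operator is topologically transitive and, for such an operator, every return set is infinite, so there exist $n\ge 1$ and $x\in B(0,1)$ with $B^n x\in V$. Then for each $j$ one has $\|x_{j+n}\|\le\|x\|<1$, hence $x_{j+n}\in B_{j+n}(0,1)$, while $(B^nx)_j=T_{j,j+n}x_{j+n}\in U_j$; this is exactly assertion $(3)$.

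For $(3)\Rightarrow(1)$ I would verify transitivity directly. After approximating by finitely supported vectors, it suffices to find, for finitely supported $u,v$ with supports in $\{1,\dots,N\}$ and any $\varepsilon>0$, some $n$ and $w$ with $\|w-u\|<\varepsilon$ and $\|B^n w-v\|<\varepsilon$. The natural idea is to take $w=u+z$ with $z$ supported on the block $\{n+1,\dots,n+N\}$, so that $(B^n z)_j=T_{j,j+n}z_{j+n}$ can be arranged to match $v_j$. The main obstacle is that this only works when $n\ge N$: otherwise the block of $z$ overlaps the support of $u$ and the unwanted term $B^n u$ spoils the target. I would overcome this by feeding $(3)$ the rescaled targets $U_j=B_j(Mv_j,1)$ for a large parameter $M$. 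Since the finitely many operators $T_{j,j+m}$ with $m<N$ are bounded, the image of the unit ball under $T_{j_0,j_0+m}$ cannot come within distance $1$ of $Mv_{j_0}$ once $M\|v_{j_0}\|>\|T_{j_0,j_0+m}\|+1$ (choosing $j_0$ with $v_{j_0}\neq 0$); hence any $n$ produced by $(3)$ is forced to satisfy $n\ge N$. Writing the resulting vectors as $\xi_j\in B_{j+n}(0,1)$ with $\|T_{j,j+n}\xi_j-Mv_j\|<1$ and setting $z_{j+n}:=\xi_j/M$, the same factor $1/M$ makes $\|z\|$ arbitrarily small while keeping $\|B^n z-v\|$ small; since $n\ge N$ gives $B^n u=0$ and disjoint supports, $w=u+z$ does the job. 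The case $v=0$ is trivial (take $w=u$ and any $n\ge N$).

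Thus the decisive point is the double role of the scaling parameter $M$: it both forces the shift length $n$ to exceed the common support size $N$ (via boundedness of the finitely many initial compositions $T_{j,j+m}$, $m<N$) and shrinks the corrective perturbation $z$. The argument for $c_0((X_k)_k,\mathbb{N})$ is identical, the only change being that the estimate $\|z\|_{c_0}=\max_{j\le N}\|\xi_j\|/M$ replaces the $\ell^p$ computation.
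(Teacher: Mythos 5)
Your proof is correct, and for most of the statement it follows the paper's route: the equivalence $(1)\Leftrightarrow(2)$ via the dense set of finitely supported vectors with orbits eventually equal to $0$, and $(1)\Rightarrow(3)$ by applying transitivity (or a hypercyclic vector) from the unit ball to $\bigcap_j P_j^{-1}(U_j)$, are exactly what the paper does. The one place where you genuinely diverge is $(3)\Rightarrow(1)$. The paper invokes the known criterion for operators with dense generalized kernel (as in the cited works of Bernal-Gonz\'alez--Grosse-Erdmann and Le\'on-Saavedra), which reduces transitivity to finding, for each target, a single $n$ sending a small ball around $0$ into a neighbourhood of that target; the scaled sets $U_j=B_j(\tfrac{N}{\varepsilon}x_j,1)$ then finish the job. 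You instead verify full topological transitivity from $u$ to $v$ by hand, and to do so you need the extra observation that choosing the scaling parameter $M$ large enough forces the integer $n$ produced by $(3)$ to satisfy $n\ge N$ (since the finitely many compositions $T_{j_0,j_0+m}$, $m<N$, map the unit ball into a bounded set that misses $B_{j_0}(Mv_{j_0},1)$), whence $B^n u=0$ and the perturbation $z$ has support disjoint from that of $u$. The trade-off is clear: the paper's version is shorter but leans on an external lemma whose proof is not entirely trivial, while yours is self-contained at the cost of one additional (and correct) forcing argument; the core construction --- rescale the targets, pull back unit-ball vectors, divide by $M$ --- is the same in both.
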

\begin{proof}
We already know that $B_{(T_k)}$ is weakly mixing if and only if $B_{(T_k)}$ is hypercyclic.\\
Assume now that $B_{(T_k)}$ is hypercyclic. Let $N\ge 1$, and $(U_{j})_{1\le j\le N}$ where each set $U_j$ is a non-empty open subset of $X_j$. We can consider $\varepsilon>0$ and $x$ such that for every $1\le j\le N$,  $B_j(x_j,\varepsilon)\subset U_j$ and $x_j=0$, for $j>N$. Let $y$ be a hypercyclic vector for $B_{(T_k)}$ with $\|y\|<1$. We pick $n\geq 1$ such that 
\[\|B^n_{(T_k)}y-x\|<\varepsilon.\]
If $1\le j\le N$, we deduce that $y_{j+n}\in B_{j+n}(0,1)$ and that  
\[T_{j,j+n}y_{j+n}\in B_{j}(x_j,\varepsilon)\subset U_j.\]
In other words, the assertion (3) is satisfied.\\

On the other hand, assume that for every $N\ge 1$ and every $(U_{j})_{1\le j\le N}$ where $U_j$ is a non-empty open subset in $X_j$, there exists $n\ge 1$ such that for every $1\le j\le N$,
\[T_{j,j+n}B_{j+n}(0,1)\cap U_j\ne \emptyset.\]
 Since $B_{(T_k)_k}$ has a dense generalized kernel, we can deduce that $B_{(T_k)_k}$ is topologically transitive and thus hypercyclic (and even weakly mixing), if we show that for every $\varepsilon>0$, every $x\in  c_{00}((X_k)_k,\mathbb{N})$ (the space of finitely supported sequences), there exists $n\ge 1$ such that
\[B^n_{(T_k)}(B(0,\varepsilon))\cap B(x,\varepsilon)\ne \emptyset .\]
Let $\varepsilon>0$ and $x\in c_{00}((X_k)_k,\mathbb{N})$. We consider $N\ge 1$, such that if $j>N$ then $x_j=0$, and the open sets $(U_j)_{1\le j\le N}$ given by $U_j=B_j(\frac{N}{\varepsilon}x_j,1)$. We deduce from our assumption that there exists $n\geq 1$ such that 
for every $1\le j\le N$,
\[T_{j,j+n}B_{j+n}(0,1)\cap U_j\ne \emptyset.\]
For every $1\le j\le N$, there thus exists $x'_{j+n}\in X_{j+n}$ such that $\|x'_{j+n}\|_{j+n}<1$ and $T_{j,j+n}x'_{j+n}\in U_j$. If we complete by letting $x'_j=0$ for $j\le n$ or $j>n+N$, we deduce
that  $\frac{\varepsilon}{N} x'\in B(0,\varepsilon)$ and \[B^n_{(T_k)}(\frac{\varepsilon}{N} x')\in B(x,\varepsilon).\]
We can thus conclude the proof.
\end{proof}

In the case of an operator $B_T$, it is natural to investigate the link between the dynamical properties of $T$ and $B_T$. The third condition in Proposition~\ref{unilhyp} can be simplified as follows in this case.

\begin{cor}\label{corhyp}
Let $B_{T}$ be an operator on $\ell^p(X,\mathbb{N})$ or $c_0(X,\mathbb{N})$.
The following assertions are equivalent:
\begin{enumerate}[\upshape (1)]
\item $B_{T}$ is hypercyclic;
\item $B_{T}$ is weakly mixing;
\item for any $N\ge 0$, any family $(U_j)_{1\le j\le N}$ of non-empty open sets in $X$,
\[N_{T}(B_X(0,1),U_1)\cap\cdots\cap N_{T}(B_X(0,1),U_N)\ne \emptyset.\]
\end{enumerate}
\end{cor}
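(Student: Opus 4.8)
The plan is to obtain the corollary directly from Proposition~\ref{unilhyp} by specializing to $X_k=X$ and $T_k=T$ for all $k$, and then to reconcile the resulting condition with the formulation in terms of $N_T$. First I would observe that in this constant case $T_{j,j+n}=T_{j+1}\circ\cdots\circ T_{j+n}=T^n$ and $B_{j+n}(0,1)=B_X(0,1)$, so assertions (1) and (2) coincide verbatim with those of Proposition~\ref{unilhyp}, while its condition (3) becomes: for every $N\ge 1$ and every family $(U_j)_{1\le j\le N}$ of non-empty open subsets of $X$, there exists $n\ge 1$ with $T^nB_X(0,1)\cap U_j\neq\emptyset$ for all $j$, equivalently $n\in\bigcap_{j=1}^N N_T(B_X(0,1),U_j)$ for some $n\ge 1$. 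Comparing with assertion (3) of the corollary, the only discrepancy is that $N_T$ permits the index $n=0$ (and the corollary also admits the vacuous case $N=0$). So the entire proof reduces to bridging the gap between ``$n\ge 1$'' and ``$n\ge 0$''.

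The implication from the specialized Proposition~\ref{unilhyp}(3) to Corollary~\ref{corhyp}(3) is immediate: any witness $n\ge 1$ lies in $\{n\ge 0\}$ and hence in each $N_T(B_X(0,1),U_j)$, so the intersection is non-empty; and for $N=0$ the intersection is the whole set $\{n\ge 0\}$.

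The substantive step is the converse, and here the linearity of $T$ is essential; this is the step I expect to be the main obstacle, since $n=0$ can spuriously satisfy the intersection condition whenever every $U_j$ meets the unit ball, so the ``for all families'' hypothesis must be used on cleverly chosen sets rather than family by family. Assuming Corollary~\ref{corhyp}(3), I would fix a family $(U_j)_{1\le j\le N}$ and, for each $j$, pick $u_j\in U_j$ with $u_j\neq 0$ together with $0<\delta_j<\|u_j\|$ such that $B_X(u_j,\delta_j)\subseteq U_j$ (possible since $U_j$ is open and non-empty); the constraint $\delta_j<\|u_j\|$ ensures $0\notin B_X(u_j,\delta_j)$. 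I would then choose $\lambda>0$ so large that each dilated ball $V_j:=B_X(\lambda u_j,\lambda\delta_j)$ is disjoint from $B_X(0,1)$, which is possible because the infimum of norms over $V_j$ equals $\lambda(\|u_j\|-\delta_j)\to\infty$. Applying the hypothesis to $(V_j)_{1\le j\le N}$ produces some $m\in\bigcap_{j=1}^N N_T(B_X(0,1),V_j)$, and since each $V_j$ is disjoint from $B_X(0,1)$ we necessarily have $m\neq 0$, hence $m\ge 1$.

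Finally I would transport this witness back to the original sets by homogeneity. For each $j$ there is $z_j\in B_X(0,1)$ with $T^m z_j\in V_j$, and dividing by $\lambda$ gives $T^m(\tfrac1\lambda z_j)=\tfrac1\lambda T^m z_j\in\tfrac1\lambda V_j=B_X(u_j,\delta_j)\subseteq U_j$, while $\tfrac1\lambda z_j\in B_X(0,1/\lambda)\subseteq B_X(0,1)$. Thus the single index $m\ge 1$ satisfies $T^m B_X(0,1)\cap U_j\neq\emptyset$ for every $j$, which is exactly condition (3) of Proposition~\ref{unilhyp} in the constant case. Invoking Proposition~\ref{unilhyp} then closes the chain of equivalences with hypercyclicity and weak mixing, completing the proof.
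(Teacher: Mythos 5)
Your proof is correct and follows essentially the same route as the paper, which states this corollary as a direct specialization of Proposition~\ref{unilhyp} to the constant case $T_k=T$, $X_k=X$ (so that $T_{j,j+n}=T^n$ and $B_{j+n}(0,1)=B_X(0,1)$) without spelling out further details. Your additional dilation argument, ruling out the spurious witness $n=0$ allowed by the definition of $N_T$ and recovering a witness $n\ge 1$ via homogeneity, correctly fills a small gap that the paper leaves implicit.
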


In particular, if $T$ is weakly mixing, then $B_T$ is hypercyclic. Since there exist hypercyclic operators which are not weakly mixing, we can wonder if under the assumption that $T$ is hypercyclic then $B_T$ is hypercyclic (and hence weakly mixing). 

\begin{prop}
    If $T$ is hypercyclic on $X$, then $B_T$ is hypercyclic on $\ell^p(X,\mathbb{N})$ and $c_0(X,\mathbb{N})$.
\end{prop}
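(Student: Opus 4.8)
The plan is to verify the reformulation of hypercyclicity provided by Corollary~\ref{corhyp}. Concretely, writing $B_X(0,1)$ for the open unit ball of $X$, it suffices to show that for every $N\ge 1$ and every family $U_1,\dots,U_N$ of non-empty open subsets of $X$ one has
\[
N_T(B_X(0,1),U_1)\cap\dots\cap N_T(B_X(0,1),U_N)\neq\emptyset,
\]
that is, that there exists a single $n\ge 0$ together with points $v_1,\dots,v_N\in B_X(0,1)$ (the $v_j$ being allowed to depend on $j$) such that $T^nv_j\in U_j$ for every $j$. Since the set of hypercyclic vectors of $T$ is dense, I would start by fixing a hypercyclic vector $x$ with $\|x\|<1$. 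Its orbit is dense and, being the orbit of a hypercyclic vector, it meets every non-empty open set infinitely often; in particular it returns to $B_X(0,1)$ at infinitely many times, a fact I will use to keep the chosen source points inside the unit ball.

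The construction I have in mind takes each source $v_j$ to be an iterate $T^{a_j}x$ of the \emph{same} hypercyclic vector. Then $T^n v_j=T^{\,n+a_j}x$, so the requirement $T^nv_j\in U_j$ becomes $T^{\,n+a_j}x\in U_j$, while the requirement $v_j\in B_X(0,1)$ becomes $\|T^{a_j}x\|<1$, i.e. $a_j$ must be a time at which the orbit returns to the unit ball. Thus the problem reduces to selecting one shift time $n$ and return times $a_1,\dots,a_N$ with $T^{\,n+a_j}x\in U_j$ for all $j$. Equivalently, after picking for each $j$ a hitting time $t_j$ with $T^{t_j}x\in U_j$ and setting $n=\min_j t_j$, it reduces to arranging that the finite pattern $\{t_j-n\}_{j}$ of hitting times, translated so as to begin at $0$, falls entirely inside the set $S=\{m\ge 0:\|T^mx\|<1\}$ of return times; the freedom here is that each $t_j$ may be re-chosen among the infinitely many hitting times for $U_j$, while $S$ is itself infinite.

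The main obstacle is precisely this simultaneous selection of a \emph{single} $n$ serving all the targets at once. I expect this to be where hypercyclicity must be used in full, through the recurrence of the orbit to the unit ball rather than through density alone: I only need the \emph{existence} of one good time for each finite family $U_1,\dots,U_N$, not that the good times form a cofinite or syndetic set. This is the conceptual reason why plain hypercyclicity of $T$ suffices and no Hypercyclicity-Criterion-type or weak-mixing hypothesis on $T$ is required — in sharp contrast with the bilateral situation, where the analogous statement forces $T$ to be weakly mixing. Once the selection is carried out, combining it with Corollary~\ref{corhyp} (equivalently, with the scaling/homogeneity reduction already used in the proof of Proposition~\ref{unilhyp}) immediately yields that $B_T$ is hypercyclic, simultaneously on $\ell^p(X,\mathbb{N})$ and on $c_0(X,\mathbb{N})$, since condition (3) there is phrased identically for both spaces.
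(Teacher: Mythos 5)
Your reduction to condition (3) of Corollary~\ref{corhyp} is the right starting point, but the argument stops exactly where the mathematical content of the proposition begins. You acknowledge that ``the main obstacle is precisely this simultaneous selection of a single $n$ serving all the targets at once'' and then write ``once the selection is carried out\dots'' --- but that selection \emph{is} the proposition, and nothing in your proposal carries it out. Moreover, the particular framework you set up makes it look harder than necessary: you fix one hypercyclic vector $x$ in advance and insist that every source point be an iterate $T^{a_j}x$ with $a_j$ in the return-time set $S=\{m\ge 0:\|T^mx\|<1\}$, so that you need a single $n$ with $(n+S)\cap N_T(x,U_j)\neq\emptyset$ for every $j$. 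Plain hypercyclicity gives no a priori control on how the hitting-time sets $N_T(x,U_j)$ sit relative to translates of $S$, and the ``recurrence to the unit ball'' you invoke is only the statement that $S$ is infinite, which is far from enough. Your heuristic (one only needs existence of a good time, not cofiniteness) explains why the statement is plausible, but it is not an argument.

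The paper closes the gap with a chaining device in which the source point depends on the targets. Set $U_N'=U_N$ and, by transitivity, find for each $2\le j\le N$ an integer $m_j$ and a non-empty open set $U_{j-1}'\subset U_{j-1}$ with $T^{m_j}U_{j-1}'\subset U_j'$; by continuity choose a neighbourhood $W$ of $0$ with $T^iW\subset B_X(0,1)$ for all $0\le i\le m_2+\dots+m_N$; by transitivity again find $m_1$ and $x\in W$ with $T^{m_1}x\in U_1'$. Then $v_j:=T^{m_2+\dots+m_j}x$ lies in $B_X(0,1)$ \emph{by the choice of $W$} (continuity, not recurrence of a fixed orbit), while $T^{m_1}v_j=T^{m_1+\dots+m_j}x\in U_j'\subset U_j$, so $m_1\in\bigcap_j N_T(B_X(0,1),U_j)$. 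Your framework can in fact be made to work, but only by importing exactly this chaining: take $x$ hypercyclic, pick $k_0$ with $T^{k_0}x\in W$, use that $T^{k_0}x$ is again hypercyclic to get $m_1$ with $T^{k_0+m_1}x\in U_1'$, and set $a_j=k_0+m_2+\dots+m_j$. Either way, the construction of the chained sets $U_j'$, the relative times $m_j$, and the small neighbourhood $W$ is the missing idea, and without it the proof is incomplete.
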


\begin{proof}
    Let $N\geq 1$ and a family $(U_j)_{1\le j\le N}$ of non-empty open sets in $X$. By topological transitivity of $T$, setting $U_N'=U_N$, we may find, for each $2\leq j\leq N$, an integer $m_j\in \mathbb{N}$ and a non-empty open subset $U_{j-1}'\subset U_{j-1}$ such that $T^{m_j}U_{j-1}'\subset U_j'$. By continuity of $T$, we may also choose a neighbourhood $W$ of $0$ such that $T^iW\subset B_X(0,1)$, for each $0\leq i\leq m_2+\dots +m_N$. Again using topological transitivity, we pick $m_1\in \mathbb{N}$ such that $T^{m_1}W\cap U_1'\neq \emptyset$. Therefore, there is $x\in W$ such that $T^{m_1}x\in U_1'$. This implies that $T^{m_1+\dots +m_i}x\in U_i'$ for each $1\leq i\leq N$. Since, for $i\geq 2$,  $T^{m_2+\dots +m_i}x\in B(0,1)$, we conclude that \[m_1\in N_T(B_X(0,1),U_1)\cap \dots \cap N_T(B_X(0,1),U_N)\] which gives that $B_T$ is hypercyclic by Corollary~ \ref{corhyp}.
\end{proof}

Notice that $B_T$ can be hypercyclic on $\ell^p(X,\mathbb{N})$ or $c_0(X,\mathbb{N})$ even if $T$ is not hypercyclic.
A simple example is given by $T=2Id$ on $\mathbb{K}$. In particular, there exists some operator $T$ such that \textbf{$T$ is not quasi-conjugate to $B_T$ on $\ell^p(X,\mathbb{N})$ or $c_0(X,\mathbb{N})$}. However, in some cases, the operator $T$ can be quasi-conjugate to $B_T$ on $\ell^p(X,\mathbb{N})$ or $c_0(X,\mathbb{N})$. This is the subject of the following result.

\begin{prop}\label{comparison}
    Let $T=B_{(T_k)}$ be an operator on $X=\ell^p((X_k)_k,\mathbb{N})$ (resp. $c_0((X_k)_k,\mathbb{N}))$). The operator $T$ is quasi-conjugate to $B_T$ on $\ell^{p}(X,\mathbb{N})$ (resp. $c_0(X,\mathbb{N)}$).
\end{prop}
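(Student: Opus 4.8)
The plan is to exhibit an explicit bounded linear surjection $\phi\colon \ell^p(X,\mathbb{N})\to X$ (resp.\ $c_0(X,\mathbb{N})\to X$) satisfying $\phi\circ B_T=T\circ\phi$; surjectivity gives dense range, and this is precisely the required quasi-conjugacy (so that $B_T$ hypercyclic would force $T$ hypercyclic, etc.). Writing an element of $\ell^p(X,\mathbb{N})$ as $y=(y^{(n)})_{n\in\mathbb{N}}$ with $y^{(n)}=(y^{(n)}_k)_{k\in\mathbb{N}}\in X$, the map I propose is the diagonal read-off
\[\phi(y)=(y^{(k)}_k)_{k\in\mathbb{N}}.\]

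The conceptual point, and the only genuine difficulty, is to select the correct linear combination of the coordinates $y^{(n)}_k$, since the naive candidates all fail: the slice projection $y\mapsto y^{(1)}$, the summation $y\mapsto\sum_n y^{(n)}$ (off by the term $Ty^{(1)}$), and the ``power series'' $y\mapsto\sum_n T^{n}y^{(n)}$ (which in fact intertwines $T$ with the unweighted \emph{forward} shift rather than with $B_T$) all break the intertwining. The map $\phi$ is the coordinate projection onto the diagonal subspace $E=\{y:y^{(n)}_k=0\text{ whenever }n\neq k\}$. This subspace is invariant under $B_T$ because $B_T$ produces the coordinate indexed by $(n,k)$ from the coordinate indexed by $(n+1,k+1)$ with weight $T_{k+1}$, thereby preserving the diagonal $n=k$; moreover the weights read off along this diagonal are exactly $T_2,T_3,\dots$, so that $(E,B_T|_E)$ is conjugate to $(X,T)$. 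I would state this structural observation briefly to motivate $\phi$, but the verification itself does not require it.

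The verification splits into three routine steps. First, $\phi$ is bounded with $\|\phi\|\le1$, since $\|\phi(y)\|^p=\sum_k\|y^{(k)}_k\|^p\le\sum_{n,k}\|y^{(n)}_k\|^p=\|y\|^p$ in the $\ell^p$ case and $\|\phi(y)\|_\infty=\sup_k\|y^{(k)}_k\|\le\|y\|$ in the $c_0$ case; in the latter one also checks that $\phi(y)\in X$, namely $\|y^{(k)}_k\|\le\|y^{(k)}\|_X\to0$ as $k\to\infty$ because $y\in c_0(X,\mathbb{N})$. Second, $\phi$ is onto: given $x=(x_k)_k\in X$, the vector $y$ defined by $y^{(k)}_k=x_k$ and $y^{(n)}_k=0$ for $n\neq k$ satisfies $\|y\|=\|x\|$ and $\phi(y)=x$, so the range is all of $X$, hence dense. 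Third, the intertwining is a direct coordinatewise computation: since $(B_Ty)^{(k)}=Ty^{(k+1)}$,
\[\phi(B_Ty)_k=(B_Ty)^{(k)}_k=(Ty^{(k+1)})_k=T_{k+1}\,y^{(k+1)}_{k+1},\]
while $(T\phi(y))_k=T_{k+1}(\phi(y))_{k+1}=T_{k+1}\,y^{(k+1)}_{k+1}$, so the two coincide for every $k$.

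I expect the hard part to be exclusively the identification of $\phi$; once the diagonal is chosen, the argument reduces to the bookkeeping above. The same map $\phi$ should serve verbatim in both the $\ell^p$ and the $c_0$ settings, the only extra point in the $c_0$ case being the verification that $\phi(y)$ lies in the $c_0$-sum.
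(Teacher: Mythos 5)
Your proof is correct and coincides with the paper's: the authors use exactly the same diagonal map $\phi(y)=(y^{(k)}_k)_{k\in\mathbb{N}}$, noting only that it is a linear surjective contraction intertwining $B_T$ with $T$. Your verification of boundedness, surjectivity, and the intertwining identity (including the $c_0$-membership check) is accurate and, if anything, more detailed than the paper's one-line justification.
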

\begin{proof}
Let $Y=\ell^{p}(X,\mathbb{N})$ or $c_0(X,\mathbb{N})$.
It suffices to consider the continuous map $\phi:Y\to X$ given by
    $$
    \phi:((x_{1,1},x_{1,2},\dots),(x_{2,1},x_{2,2},\dots),\dots) \mapsto (x_{1,1},x_{2,2},\dots).
    $$
It is not difficult to check that $\phi$ is a linear surjective contraction, which satisfies that $\phi\circ B_{T}=T\circ \phi$.
\end{proof}
\begin{remark}\label{comprem}
Notice that for every operator $B_{(T_k)_{k\in \mathbb{Z}}}$ defined on $\ell^p((X_k)_k,\mathbb{Z})$ (resp. $c_0((X_k)_k,\mathbb{Z}))$), the restriction $B_{(T_k)_{k\in \mathbb{N}}}$ on $\ell^p((X_k)_k,\mathbb{N})$ (resp. $c_0((X_k)_k,\mathbb{N}))$) is always quasi-conjugate to $B_{(T_k)_{k\in \mathbb{Z}}}$ on $\ell^p((X_k)_k,\mathbb{Z})$ (resp. $c_0((X_k)_k,\mathbb{Z}))$).
\end{remark}

In particular, if $T$ is a weighted shift on $X=\ell^p(\mathbb{N})$ (resp. $c_0(\mathbb{N})$)  or a weighted shift on trees then $T$ is quasi-conjugate to $B_T$ on $\ell^{p}(X,J)$ (resp. $c_0(X,J)$). We will use several times this fact to establish different counterexamples.

As we will see below, there will be several differences between the unilateral case and the bilateral case. We can already notice that there is no operator $T$ on $\mathbb{K}$ such that $B_T$ is hypercyclic on $\ell^p(\mathbb{K},\mathbb{Z})$ or $c_0(\mathbb{K},\mathbb{Z})$. Let's start by adapting Proposition~\ref{unilhyp} to the bilateral case.

\begin{prop}\label{bilhyp}
On $c_0((X_k)_k,\mathbb{Z})$ and $\ell^p((X_k)_k,\mathbb{Z})$ with $1\le p<\infty$, the following assertions are equivalent:
\begin{enumerate}
\item $B_{(T_k)}$ is hypercyclic;
\item $B_{(T_k)}$ is weakly mixing;
\item for every $N\ge 1$, for every $(U_{j})_{-N\le j\le N}$, $(V_{j})_{-N\le j\le N}$ where the sets $U_j$ and $V_j$ are non-empty open subsets in $X_j$, there exists $n$ such that for every $-N\le j\le N$,
\[T_{j-n,j}U_j\cap B_{j-n}(0,1)\ne \emptyset \quad \text{and} \quad T_{j,j+n}B_{j+n}(0,1)\cap V_j\ne \emptyset.\]
\end{enumerate}
\end{prop}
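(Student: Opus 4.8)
The plan is to mirror the proof of Proposition~\ref{unilhyp}, adapting the one-sided approximation argument to the bilateral setting where a hypercyclic vector must be approximated on both a block of negative and a block of positive coordinates simultaneously. The equivalence of (1) and (2) is again immediate: in the bilateral case $B_{(T_k)}$ still possesses a dense generalized kernel (the finitely supported sequences are sent to $0$ after finitely many iterations), so hypercyclicity, topological transitivity, and weak mixing all coincide by the criterion of \cite{Gri} already invoked above. The real content is the equivalence with the quantitative condition (3), and this I would prove by the two implications $(1)\Rightarrow(3)$ and $(3)\Rightarrow(1)$.

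For $(1)\Rightarrow(3)$, I would fix $N\ge 1$ and the open sets $(U_j)_{-N\le j\le N}$, $(V_j)_{-N\le j\le N}$, then choose $\varepsilon>0$ and a vector $x$ supported on $\{-N,\dots,N\}$ with $B_j(x_j,\varepsilon)\subset V_j$ for $-N\le j\le N$. Let $y$ be a hypercyclic vector with $\|y\|<1$ and pick $n$ large with $\|B^n_{(T_k)}y-x\|<\varepsilon$. The positive block is handled exactly as before: for $-N\le j\le N$ we have $y_{j+n}\in B_{j+n}(0,1)$ and $T_{j,j+n}y_{j+n}\in B_j(x_j,\varepsilon)\subset V_j$, giving the second intersection. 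The new ingredient is the negative block: here I would instead approximate a vector supported so as to force the coordinates of $y$ in positions $j$ (for $-N\le j\le N$) to lie in $U_j$ while controlling where they are sent under $T_{j-n,j}$. The clean way is to use hypercyclicity to approximate, by $B^n_{(T_k)}y$, a target vector whose coordinate in position $j-n$ is near $0$ and then read off that $y_j\in U_j$ pushes into $B_{j-n}(0,1)$; choosing $n$ so that the same iterate simultaneously realizes both blocks is possible precisely because a single hypercyclic vector approximates any finite configuration, and one takes $N$ and $\varepsilon$ to encode both $U_j$ and $V_j$ at once.

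For $(3)\Rightarrow(1)$, I would again reduce to topological transitivity between balls $B(0,\varepsilon)$ and $B(x,\varepsilon)$ with $x\in c_{00}((X_k)_k,\mathbb{Z})$, using the dense generalized kernel as in Proposition~\ref{unilhyp}. Given $x$ supported on $\{-N,\dots,N\}$, I would apply (3) with $V_j=B_j(\tfrac{2N+1}{\varepsilon}x_j,1)$ to obtain, for the chosen $n$, preimages $x'_{j+n}$ of norm $<1$ with $T_{j,j+n}x'_{j+n}\in V_j$; assembling $\tfrac{\varepsilon}{2N+1}x'$ places a small vector whose image under $B^n_{(T_k)}$ lands near $x$ on the positive-index part. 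The first half of condition~(3), $T_{j-n,j}U_j\cap B_{j-n}(0,1)\ne\emptyset$, is what guarantees the construction is consistent on the negative side and that no mass is forced to appear in coordinates outside the intended support, which in the two-sided case is the subtlety absent from the unilateral argument.

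The main obstacle I anticipate is the bookkeeping that keeps the negative-index and positive-index approximations from interfering: in the unilateral case every index to the left of the support is automatically zero after shifting, whereas here one must simultaneously control preimages (to produce the right image near $x$) and images (to certify that the source vector really is small, i.e.\ lies in $B(0,\varepsilon)$), and these two requirements are exactly the two clauses of (3). Making the single integer $n$ serve both clauses at once, and verifying that the resulting assembled vectors have the claimed norms in the $\ell^p$ (resp.\ $c_0$) norm without cross-contamination between the left and right blocks, is where the care is needed; the estimates themselves are routine, being sums of at most $2N+1$ controlled terms.
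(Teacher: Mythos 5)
There is a genuine gap at the foundation of your argument: in the bilateral case $B_{(T_k)}$ does \emph{not} have a dense generalized kernel. Applying $B_{(T_k)}$ to a sequence supported on $\{-N,\dots,N\}$ produces a sequence supported on $\{-N-1,\dots,N-1\}$; the support is translated, not annihilated, so finitely supported sequences are not eventually mapped to $0$ (indeed, if all the $T_k$ are injective, the generalized kernel is trivial). This false premise is used twice: first to declare the equivalence of (1) and (2) ``immediate'' via the Grivaux-type argument, and second to reduce $(3)\Rightarrow(1)$ to plain topological transitivity between $B(0,\varepsilon)$ and $B(x,\varepsilon)$. Neither reduction is available here. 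The paper does not claim $(1)\Leftrightarrow(2)$ up front; it closes the cycle $(1)\Rightarrow(3)\Rightarrow(2)\Rightarrow(1)$, and the step $(3)\Rightarrow(2)$ invokes the characterization of weak mixing (Bernal-Gonz\'alez--Grosse-Erdmann, Le\'on-Saavedra) requiring, for a single $n$, both $B^n_{(T_k)}(B(x,\varepsilon))\cap B(0,\varepsilon)\ne\emptyset$ and $B^n_{(T_k)}(B(0,\varepsilon))\cap B(y,\varepsilon)\ne\emptyset$. The two clauses of (3) are exactly these two conditions in coordinates: the clause $T_{j-n,j}U_j\cap B_{j-n}(0,1)\ne\emptyset$ is not a ``consistency/bookkeeping'' device as you describe it, but the substantive half producing a vector near $x$ whose $n$-th iterate is small. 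Without recognizing this, your $(3)\Rightarrow(1)$ only establishes one of the two required intersections and does not yield hypercyclicity.

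The direction $(1)\Rightarrow(3)$ is also not quite assembled correctly as written. You take a hypercyclic vector $y$ with $\|y\|<1$ and then separately discuss forcing its coordinates into the $U_j$; but one cannot in general have $\|y\|<1$ and $y_j\in U_j$ simultaneously. The clean mechanism (the paper's) is to use density of hypercyclic vectors to pick a single hypercyclic $z$ with $\|z-x\|<\varepsilon$, where $x$ is supported on $[-N,N]$ with $B_j(x_j,\varepsilon)\subset U_j$, and then one $n>2N$ with $\|B^n_{(T_k)}z-y\|<\varepsilon$, where $y$ is supported on $[-N,N]$ with $B_j(y_j,\varepsilon)\subset V_j$. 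Reading off coordinates of $z$ and $B^n_{(T_k)}z$ at positions $j$, $j\pm n$ (using that $x$ and $y$ vanish outside $[-N,N]$) gives both clauses of (3) for that single $n$. Your sketch gestures at this but describes two approximations that you then assert can be merged, which is precisely the point that needs the one-vector, one-iterate argument.
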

\begin{proof}
Assume that $B_{(T_k)}$ is hypercyclic. Let $N\ge 1$, $(U_{j})_{-N\le j\le N}$ and $(V_{j})_{-N\le j\le N}$ where $U_j$ and $V_j$ are non-empty open subsets in $X_j$. We can consider $0<\varepsilon<1$ and $x, y$ such that for every $-N\le j\le N$,  $B_j(x_j,\varepsilon)\subset U_j$, $B_j(y_j,\varepsilon)\subset V_j$, and $x_j=y_j=0$ for $|j|>N$. Let $z$ be a hypercyclic vector for $B_{(T_k)}$ with $\|z-x\|<\varepsilon$. We pick $n>2N$ such that 
\[\|B^n_{(T_k)}z-y\|<\varepsilon.\]
If $-N\le j\le N$, we deduce that $z_{j}\in B_j(x_j,\varepsilon)\subset U_j$ and that \[T_{j-n,j}z_{j}\in B_{j-n}(0,\varepsilon)\subset B_{j-n}(0,1).\]
Similarly, we get that if $-N\le j\le N$, $z_{j+n}\in B_{j+n}(0,\varepsilon)\subset B_{j+n}(0,1)$ and that  
\[T_{j,j+n}z_{j+n}\in B_{j}(y_j,\varepsilon)\subset V_j.\]
We have thus shown that (1) $\implies$ (3).\\

On the other hand, assume that for every $N\ge 1$, for every $(U_{j})_{-N\le j\le N}$, $(V_{j})_{-N\le j\le N}$ where $U_j$ and $V_j$ are non-empty open subsets in $X_j$, there exists $n$ such that for every $-N\le j\le N$,
\[T_{j-n,j}U_j\cap B_{j-n}(0,1)\ne \emptyset \quad \text{and} \quad T_{j,j+n}B_{j+n}(0,1)\cap V_j\ne \emptyset.\]
In order to deduce that $B_{(T_k}$ is weakly mixing, it is enough to show that for every $\varepsilon>0$, every $x,y\in c_{00}((X_k)_k,\mathbb{Z})$, there exists $n$ such that
\[B^n_{(T_k)}(B(x,\varepsilon))\cap B(0,\varepsilon)\ne \emptyset \quad \text{and} \quad B^n_{(T_k)}(B(0,\varepsilon))\cap B(y,\varepsilon)\ne \emptyset \quad \text{(see \cite{BeGr}, \cite{Le})}.\]
Let $\varepsilon>0$ and $x,y\in c_{00}((X_k)_k,\mathbb{Z})$. We consider $N$ such that if $|j|>N$ then $x_j=y_j=0$. Let $(U_j)_{-N\le j\le N}$ and $(V_j)_{-N\le j\le N}$ be given by $U_j=B(\frac{(2N+1)}{\varepsilon}x_j,1)$ and $V_j=B(\frac{(2N+1)}{\varepsilon}y_j,1)$. We deduce from our assumption that there exists $n$ such that 
for every $-N\le j\le N$,
\[T_{j-n,j}U_j\cap B_{j-n}(0,1)\ne \emptyset \quad \text{and} \quad T_{j,j+n}B_{j+n}(0,1)\cap V_j\ne \emptyset.\]
For every $-N\le j\le N$, there thus exist $x'_j\in U_j$ such that $\|T_{j-n,n}x'_j\|_{j-n}<1$ and $y'_{j+n}\in X_{j+n}$ such that $\|y'_{j+n}\|_{j+n}<1$ and $T_{j,j+n}y'_{j+n}\in V_j$. If we let $x'_j=0$ if $|j|>N$ and $y'_{j+n}=0$ if $|j|>N$, we deduce
that $\frac{\varepsilon}{2N+1}x'\in B(x,\varepsilon)$ and that \[B^n_{(T_k)_k}(\frac{\varepsilon}{2N+1} x')\in B(0,\varepsilon)\]
and we also deduce that $\frac{\varepsilon}{2N+1} y'\in B(0,\varepsilon)$ and \[B^n_{(T_k)_k}(\frac{\varepsilon}{2N+1} y')\in B(y,\varepsilon).\]
Since every weakly mixing operator is hypercyclic, we get the desired equivalences
\end{proof}

If we consider the operator $B_T$ on $c_0(X,\mathbb{Z})$ and $\ell^p(X,\mathbb{Z})$, we then get the following characterization.

\begin{cor}
On $c_0(X,\mathbb{Z})$ and $\ell^p(X,\mathbb{Z})$ with $1\le p<\infty$, the following assertions are equivalent:
\begin{enumerate}
\item $B_T$ is hypercyclic;
\item $B_T$ is weakly mixing;
\item $T$ is weakly mixing on $X$.
\end{enumerate}
\end{cor}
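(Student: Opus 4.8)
The plan is to deduce everything from Proposition~\ref{bilhyp} applied to the special case $X_k=X$, $T_k=T$, in which $T_{j-n,j}=T_{j,j+n}=T^n$ and every ball $B_k(0,1)$ equals $B_X(0,1)$. Under this specialization, condition~(3) of Proposition~\ref{bilhyp} becomes: for every $N\ge 1$ and all families $(U_j)_{-N\le j\le N}$, $(V_j)_{-N\le j\le N}$ of nonempty open sets in $X$, there is $n$ with
\[T^n U_j\cap B_X(0,1)\ne\emptyset\quad\text{and}\quad T^nB_X(0,1)\cap V_j\ne\emptyset\qquad(-N\le j\le N).\]
Since Proposition~\ref{bilhyp} already yields $(1)\Leftrightarrow(2)$, it remains only to match this displayed statement with the weak mixing of $T$.

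For $(2)\Rightarrow(3)$ I would show that $B_T$ weakly mixing forces $T$ weakly mixing. By the three--open--set characterization of weak mixing already invoked in the proof of Proposition~\ref{bilhyp} (see \cite{BeGr},\cite{Le}), it suffices to prove that for every $\varepsilon>0$ and $x,y\in X$ there is $n$ with $T^nB_X(x,\varepsilon)\cap B_X(0,\varepsilon)\ne\emptyset$ and $T^nB_X(0,\varepsilon)\cap B_X(y,\varepsilon)\ne\emptyset$. Dividing by $\varepsilon$ and using linearity of $T^n$, these two requirements are exactly $T^nB_X(x/\varepsilon,1)\cap B_X(0,1)\ne\emptyset$ and $T^nB_X(0,1)\cap B_X(y/\varepsilon,1)\ne\emptyset$; taking $N=1$, $U_0=B_X(x/\varepsilon,1)$ and $V_0=B_X(y/\varepsilon,1)$ in the specialized condition~(3) (which holds because $B_T$ is weakly mixing) produces such an $n$. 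Hence $T$ is weakly mixing.

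For $(3)\Rightarrow(2)$ I would verify the specialized condition~(3) directly. If $T$ is weakly mixing, then all its finite direct powers $T\oplus\cdots\oplus T$ are hypercyclic; equivalently, for any finite list of pairs of nonempty open sets there is a single $n$ making each intersection $T^n(\cdot)\cap(\cdot)$ nonempty (see \cite{GrPe11}). Applying this to the $2(2N+1)$ pairs $(U_j,B_X(0,1))$ and $(B_X(0,1),V_j)$, $-N\le j\le N$, yields one $n$ satisfying the displayed condition, so $B_T$ is weakly mixing by Proposition~\ref{bilhyp}, and hypercyclic by the same proposition.

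The index bookkeeping and the scaling are routine; the only point needing care is that the specialized condition~(3) always uses the \emph{fixed} neighbourhood $B_X(0,1)$ of $0$ as the intermediate set, so to recover weak mixing of $T$ for arbitrarily small neighbourhoods one must rescale, which is legitimate because $T^n$ is linear and the balls $B_X(0,\varepsilon)$ form a neighbourhood base at $0$. I expect the main conceptual step to be recognizing that, after this specialization, condition~(3) is precisely the three--open--set form of weak mixing on one side and the finite--family transitivity form on the other, so that no work beyond Proposition~\ref{bilhyp} and these standard equivalences is required.
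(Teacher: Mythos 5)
Your proposal is correct and follows essentially the same route as the paper: specialize condition~(3) of Proposition~\ref{bilhyp} to the constant case, use linearity to rescale the unit ball to arbitrary neighbourhoods of $0$, and identify the resulting condition with weak mixing of $T$ via the three-open-set characterization and Furstenberg's theorem that all finite direct sums $T\oplus\cdots\oplus T$ are transitive exactly when $T$ is weakly mixing. The only cosmetic difference is that you split the equivalence into two directions (using $N=1$ for one and the full finite families for the other), whereas the paper states the identification in one stroke.
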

\begin{proof}
Since $T_k=T$ for any $k$, Condition (3) in Proposition~\ref{bilhyp} can be replaced by:\\
for every $N\ge 1$, every $(U_{j})_{-N\le j\le N}$, $(V_{j})_{-N\le j\le N}$ where $U_j$ and $V_j$ are non-empty open subsets in $X$, there exists $n$ such that for every $-N\le j\le N$,
\[T^nU_j\cap B_X(0,1)\ne \emptyset \quad \text{and} \quad T^n B_X(0,1)\cap V_j\ne \emptyset.\]
By linearity, we can replace $B_X(0,1)$ by any open ball centered at $0$ and thus by any neighbourhood of $0$. Condition (3) is therefore equivalent to require that every direct sum $T\oplus\cdots\oplus T$ is weakly mixing (see \cite{BeGr}, \cite{Le}). Moreover, every direct sum $T\oplus\cdots\oplus T$ is weakly mixing if and only if $T$ is weakly mixing (\cite{Fur}).
\end{proof}

We notice that unlike the unilateral case, there exists a hypercyclic operator $T$ on $X$ such that $B_T$ is not hypercyclic on $c_0(X,\mathbb{Z})$ (resp. $\ell^p(X,\mathbb{Z})$). It suffices to consider for $T$ a hypercyclic operator which is not weakly mixing (\cite{RoRe}). Therefore we can deduce that \textbf{$B_T$ is not quasi-conjugate to $T$ on $\ell^p(X,\mathbb{Z})$ or $c_0(X,\mathbb{Z})$.} On the other hand, while a bilateral weighted shift $B_w$ on $c_0(\mathbb{K},\mathbb{Z})$ or $\ell^p(\mathbb{K},\mathbb{Z})$ can never be hypercyclic if $w$ is constant, the operator $B_{(T_k)}$ can be hypercyclic under the condition that $T_k=T$ for every $k$. However, it is only possible if $T$ is weakly mixing on $X$ and thus if $X$ is infinite-dimensional.

\section{Mixing, Chaos and Frequent hypercyclicity}\label{MCF}

In this section, we investigate three strong dynamical properties that imply hypercyclicity : mixing, chaos and frequent hypercyclicity. This study will allow us to prove that for the operator $B_T$, we do not have in general the equivalence between chaos and frequent hypercyclicity on $\ell^p(X,J)$ and we do not have the equivalence between chaos and mixing on $c_0(X,J)$.

\subsection{Mixing}
We start by characterizing when $B_{(T_k)}$ is mixing in terms of the sequence $(T_k)$.  We give below the proof for the bilateral case. The characterization in the unilateral case can easily be obtained by adapting the following proof. Moreover, in view of the characterization obtained in Proposition~\ref{bilhyp}, the following characterization is not surprising.

\begin{prop} \label{prop mixing}
On $\ell^p((X_k)_k,\mathbb{Z})$ with $1\le p<\infty$ and $c_0((X_k)_k,\mathbb{Z})$, the following assertions are equivalent:
\begin{enumerate}
\item $B_{(T_k)}$ is mixing;
\item for every $j\in \mathbb{Z}$, for every non-empty open subsets $U_j$, $V_j$ of $X_j$, there exists $n_0$ such that for every $n\ge n_0$
\[T_{j-n,j}U_j\cap B_{j-n}(0,1)\ne \emptyset \quad \text{and} \quad T_{j,j+n}B_{j+n}(0,1)\cap V_j\ne \emptyset.\]
\end{enumerate}
\end{prop}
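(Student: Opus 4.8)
The plan is to follow the proof of Proposition~\ref{bilhyp} almost verbatim, replacing everywhere the transitivity phrase ``there exists $n$'' by the mixing phrase ``for all sufficiently large $n$'', and, as usual, reducing mixing to the verification that $N_{B_{(T_k)}}(U,V)$ is cofinite when $U,V$ range over the basis of balls centered at finitely supported vectors (this reduction is legitimate since $N_{B_{(T_k)}}(\cdot,\cdot)$ is monotone in each argument). For the implication $(1)\Rightarrow(2)$ I would fix $j$ and non-empty open sets $U_j,V_j\subset X_j$ and apply mixing twice. For the forward condition, apply it to the pair $\big(B(0,1),\mathcal V\big)$, where $\mathcal V$ is a small ball in the product space around the vector supported only at coordinate $j$ by a point $v$ with $B_j(v,\delta)\subset V_j$; any $u\in B(0,1)$ with $B^n_{(T_k)}u\in\mathcal V$ then satisfies $\|u_{j+n}\|_{j+n}\le\|u\|<1$ and $(B^n_{(T_k)}u)_j=T_{j,j+n}u_{j+n}\in V_j$, which is exactly $T_{j,j+n}B_{j+n}(0,1)\cap V_j\ne\emptyset$. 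Symmetrically, applying mixing to $\big(\mathcal U,B(0,1)\big)$ with $\mathcal U$ a small ball around a vector supported at $j$ by a point of $U_j$ yields the backward condition, using $(B^n_{(T_k)}u)_{j-n}=T_{j-n,j}u_j$. Each application produces cofinitely many admissible $n$, and intersecting the two cofinite sets gives the single threshold $n_0$ required in (2).

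The substance is the converse $(2)\Rightarrow(1)$. Given $\varepsilon>0$ and $x,y\in c_{00}((X_k)_k,\mathbb Z)$, choose $N$ with $x_j=y_j=0$ for $|j|>N$ and apply (2), for each $|j|\le N$, to the balls $U_j=B_j(\frac{2N+1}{\varepsilon}x_j,1)$ and $V_j=B_j(\frac{2N+1}{\varepsilon}y_j,1)$; let $n_0$ be the maximum of $2N+1$ and the finitely many thresholds obtained. For every $n\ge n_0$ I would then build one vector $u$, supported on the two blocks $\{\,|j|\le N\,\}$ and $\{\,j+n:|j|\le N\,\}$, by setting $u_j=\frac{\varepsilon}{2N+1}x'_j$ on the first block, where $x'_j\in U_j$ is chosen with $T_{j-n,j}x'_j\in B_{j-n}(0,1)$, and $u_{j+n}=\frac{\varepsilon}{2N+1}y'_{j+n}$ on the second block, where $\|y'_{j+n}\|_{j+n}<1$ and $T_{j,j+n}y'_{j+n}\in V_j$. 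Since $n>2N$ the two blocks are disjoint, so $u$ is well defined. One then checks $u\in B(x,\varepsilon)$ (the first block keeps $u$ close to $x$, while on the second block $u$ is small and $x$ vanishes) and $B^n_{(T_k)}u\in B(y,\varepsilon)$ (the second block yields $(B^n_{(T_k)}u)_j\approx y_j$, and the first block contributes only the terms $(B^n_{(T_k)}u)_{j-n}=\frac{\varepsilon}{2N+1}T_{j-n,j}x'_j$ of norm $<\frac{\varepsilon}{2N+1}$, which sit at indices $j-n<-N$ where $y$ is zero). As $n\ge n_0$ was arbitrary, $N_{B_{(T_k)}}(B(x,\varepsilon),B(y,\varepsilon))$ is cofinite.

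The only genuinely new point compared with Proposition~\ref{bilhyp} is that mixing forbids the two-step ``four-set'' splitting used there: I cannot use one value of $n$ for the approach to $x$ and a different one for the image near $y$, but must realize both requirements with a single $u$ for \emph{every} large $n$ simultaneously. What makes this possible is precisely that the hypotheses in (2) already hold for all large $n$, together with the disjointness of the source block $\{\,|j|\le N\,\}$ and the target block $\{\,j+n:|j|\le N\,\}$ once $n>2N$, so the two prescriptions for $u$ do not interfere and the unwanted cross-terms fall outside the support of $y$. Once this is arranged, controlling the two norms is routine: trivial in the $c_0$ sup-norm, and in $\ell^p$ a bounded sum of $2(2N+1)$ terms that is made $<\varepsilon$ by an obvious adjustment of the scaling constant. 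The unilateral case and the $c_0$ case require only the evident notational changes.
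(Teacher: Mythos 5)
Your proof is correct. The implication $(1)\Rightarrow(2)$ is essentially the paper's argument: the paper applies mixing to the cylinder sets $P_j^{-1}(U_j)$, $P_j^{-1}(V_j)$ against the unit ball, which is the same device as your small balls $\mathcal U$, $\mathcal V$ around vectors supported at coordinate $j$. Where you genuinely diverge is in $(2)\Rightarrow(1)$. The paper invokes the Grosse-Erdmann--Peris characterization of mixing (it suffices that $N_{B_{(T_k)}}(U,W)$ and $N_{B_{(T_k)}}(W,V)$ be cofinite for $W$ a neighbourhood of zero), which lets it treat the ``forward'' and ``backward'' requirements with two \emph{separate} vectors $w$ and $z$, each handling only one condition. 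You instead verify the definition of mixing directly on basic open sets $N_{B_{(T_k)}}(B(x,\varepsilon),B(y,\varepsilon))$, building a \emph{single} vector $u$ supported on the two disjoint blocks $\{|j|\le N\}$ and $\{j+n:|j|\le N\}$; the disjointness for $n>2N$ and the fact that the cross-terms $T_{j-n,j}x'_j$ land outside the support of $y$ are exactly what make this work. Your route is self-contained (no appeal to the zero-neighbourhood criterion) at the price of the combined construction, and it is the same combination already used in the paper's proof of Proposition~\ref{bilhyp}, so it transfers cleanly; the only bookkeeping item, which you correctly flag, is that the $\ell^p$ estimate now involves $2(2N+1)$ terms rather than $2N+1$, so the scaling constant must be halved. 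Both arguments are sound; the paper's is marginally shorter because each of its two vectors satisfies only one estimate.
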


\begin{proof}
We first show that (1) $\Rightarrow$ (2). Let $j\in \mathbb{Z}$ and some non-empty open subsets $U_j$ and $V_j$ of $X_j$. Since the projection $P_j$ onto the $j$-th coordinate is bounded and since $B_{(T_k)}$ is mixing, there exists $n_0$ such that for every $n\geq n_0$, 
\[B^n_{(T_k)}(P_j^{-1}(U_j)) \cap B(0,1)\neq \emptyset\quad\text{and}\quad P_j^{-1}(V_j) \cap B_{(T_k)}^{n}(B(0,1))\neq \emptyset.\] Since $P_l(B(0,1))\subset B_l(0,1)$ for any $l\in \mathbb{Z}$, we get (2).\\

We now show that (2) $\Rightarrow$ (1). The operator $B_{(T_k)}$ is mixing if and only if all the sets $N_{B_{(T_k)}}(U,W)$ and $N_{B_{(T_k)}}(W,V)$ are cofinite when $U$ and $V$ are non-empty open subsets of $\ell^p((X_k)_k,\mathbb{Z})$ (resp. $c_0((X_k)_k,\mathbb{Z})$) and $W$ is a neighbourhood of zero (\cite{GrPe10}). By the homogeneity of the norm, it is enough to show that the sets $N_{B_{(T_k)}}(U,B(0,1))$ and $N_{B_{(T_k)}}(B(0,1),V)$ are cofinite.

Let $U$ and $V$ be non-empty open subsets of $\ell^p((X_k)_k,\mathbb{Z})$ (resp. $c_0((X_k)_k,\mathbb{Z})$). We consider $N\geq 1$, $x\in U$ and $y\in V$ such that $x_j=y_j=0$ for $|j|>N$. We then select for all $-N\leq j\leq N$, $U_j$ and $V_j$ open subsets of $X_j$ such that \[x\in \dots  \times \{0\} \times \prod_{j=-N}^NU_j \times \{0\} \times \dots \subset U\ \text{and}\ y\in \dots \times \{0\} \times \prod_{j=-N}^NV_j \times \{0\} \times  \dots \subset V.\] By $(2)$, there exists $n_0$ such that for each $n\geq n_0$ and each $|j|\leq N$, we can find $w_j\in (2N+1)U_j$ and $z_{j+n} \in B_{j+n}(0,1)$ such that $T_{j-n,j}w_j\in B_{j-n}(0,1)$ and $T_{j,j+n}z_j\in (2N+1)V_j$. Then we have that 
    $$
    w=(\dots,0,0,\frac{w_{-N}}{2N+1},\dots ,\frac{w_{N}}{2N+1},0,0,\dots )\in U
    $$
    and for every $n\ge n_0$,
    $
    B^n_{(T_k)}w\in  B(0,1).
    $
    Similarly,
    $$
    z=(\dots,0,0,\frac{z_{-N+n}}{2N+1},\dots ,\frac{z_{n+N}}{2N+1},0,0,\dots )\in B(0,1)
    $$
    and for every $n\ge n_0$, $B^n_{(T_k)}z\in V$
    which concludes the proof.
\end{proof}

In particular, if each operator $T_k$ is equal to $T$, Condition $(2)$ in Proposition~\ref{prop mixing} is equivalent to require that for every non-empty open set $U$ in $X$, the sets $N_T(B(0,1),U)$ and $N_T(U,B(0,1))$ are cofinite. Since this is equivalent to require that $T$ is mixing (see \cite{GrPe10}), we deduce that in the bilateral case, $B_T$ is mixing if and only if $T$ is mixing.

\begin{cor}\label{bilmix}
On $c_0(X,\mathbb{Z})$ and $\ell^p(X,\mathbb{Z})$ with $1\le p<\infty$, the following assertions are equivalent:
\begin{enumerate}
\item $B_T$ is mixing;
\item $T$ is mixing on $X$.
\end{enumerate}
\end{cor}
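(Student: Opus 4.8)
The plan is to obtain this directly from Proposition~\ref{prop mixing} by specializing to the case $T_k=T$ and $X_k=X$ for all $k\in\mathbb{Z}$. Under this specialization the iterated operators collapse: from the definition $T_{k,n}=T_{k+1}\circ\cdots\circ T_n$ one reads off that $T_{j-n,j}=T^n$ and $T_{j,j+n}=T^n$, independently of $j$. Hence the two membership conditions in Condition~(2) of Proposition~\ref{prop mixing} become $T^nU_j\cap B_X(0,1)\ne\emptyset$ and $T^nB_X(0,1)\cap V_j\ne\emptyset$, and since every $X_j$ equals $X$ the dependence on the index $j$ disappears entirely.

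First I would rewrite Condition~(2) in this collapsed form: for all non-empty open sets $U,V$ in $X$ there is an $n_0$ with $T^nU\cap B_X(0,1)\ne\emptyset$ and $T^nB_X(0,1)\cap V\ne\emptyset$ for every $n\ge n_0$. This is exactly the statement that both $N_T(U,B_X(0,1))$ and $N_T(B_X(0,1),V)$ are cofinite for every pair of non-empty open sets $U,V$.

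It then remains to identify this with mixing of $T$. By the homogeneity of the norm, replacing $B_X(0,1)$ by $rB_X(0,1)$ for arbitrary $r>0$ changes nothing, so the condition asserts that $N_T(U,W)$ and $N_T(W,V)$ are cofinite for the family of neighbourhoods $W$ of zero given by balls centered at the origin; since every neighbourhood of zero contains such a ball, this is precisely the characterization of mixing recalled from \cite{GrPe10} and already invoked in the proof of Proposition~\ref{prop mixing}. Thus Condition~(2) for $B_T$ is equivalent to $T$ being mixing, which yields the stated equivalence.

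The argument carries no real obstacle; it is a bookkeeping specialization of the preceding proposition. The only point requiring a little care is the reduction from an arbitrary neighbourhood of zero to the single ball $B_X(0,1)$, which is handled by linearity exactly as in the cited characterization of mixing.
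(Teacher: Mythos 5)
Your proposal is correct and follows exactly the route the paper takes: the paper derives the corollary by specializing Condition~(2) of Proposition~\ref{prop mixing} to $T_k=T$, so that $T_{j-n,j}=T_{j,j+n}=T^n$, and then identifies the resulting cofiniteness of $N_T(B_X(0,1),U)$ and $N_T(U,B_X(0,1))$ with the characterization of mixing from \cite{GrPe10}. Your extra remark about using homogeneity to pass from the unit ball to arbitrary neighbourhoods of zero is precisely the (implicit) point the paper relies on.
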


In the unilateral case, we can benefit from the dense generalized kernel and adapt the proof of Proposition~\ref{prop mixing} to get the following statement.

\begin{prop}\label{mixing:uni}
Assume that $B_{(T_k)}$ is an operator on $\ell^p((X_k)_{k},\mathbb{N})$ (resp. on $c_0((X_k)_{k},\mathbb{N})$).
The following assertions are equivalent:
\begin{enumerate}[\upshape (1)]
\item $B_{(T_k)}$ is mixing;
\item for every $j\in \mathbb{N}$, every non-empty open subset $U_j$ of $X_j$, there exists $n_0$ such that for every $n\ge n_0$,
\[T_{j,j+n}(B_{j+n}(0,1))\cap U_j\ne \emptyset.\]
\end{enumerate}
\end{prop}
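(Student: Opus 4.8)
The plan is to follow the same two-step scheme as in Proposition~\ref{prop mixing}, the crucial simplification being that in the unilateral case the \emph{backward} half of the mixing condition comes for free from the dense generalized kernel, so that only the forward condition (2) survives. Throughout I use the identity $P_j\circ B_{(T_k)}^n = T_{j,j+n}\circ P_{j+n}$, i.e. that the $j$-th coordinate of $B_{(T_k)}^n x$ is $T_{j,j+n}x_{j+n}$.

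For the implication (1) $\Rightarrow$ (2), I would fix $j\in\mathbb{N}$ and a non-empty open set $U_j\subset X_j$ and apply mixing to the non-empty open set $P_j^{-1}(U_j)$ and the neighbourhood $B(0,1)$ of zero: there is $n_0$ such that for all $n\ge n_0$ one has $B_{(T_k)}^n(B(0,1))\cap P_j^{-1}(U_j)\ne\emptyset$. Reading off the $j$-th coordinate of such a point $B_{(T_k)}^n z$ with $z\in B(0,1)$, namely $T_{j,j+n}z_{j+n}\in U_j$, and using $\|z_{j+n}\|\le\|z\|<1$, immediately yields $T_{j,j+n}(B_{j+n}(0,1))\cap U_j\ne\emptyset$, which is exactly (2). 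Note that only the forward direction of mixing, $N_{B_{(T_k)}}(B(0,1),\cdot)$ cofinite, is used here.

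For (2) $\Rightarrow$ (1) I would invoke the characterization of \cite{GrPe10}: $B_{(T_k)}$ is mixing as soon as $N_{B_{(T_k)}}(U,B(0,1))$ and $N_{B_{(T_k)}}(B(0,\varepsilon),V)$ are cofinite for all non-empty open $U,V$, homogeneity of the norm letting me restrict the neighbourhood of zero to a ball centered at $0$. The set $N_{B_{(T_k)}}(U,B(0,1))$ is disposed of at once by the dense generalized kernel: picking a finitely supported $x\in U$ with $x_j=0$ for $j>M$, the coordinate $T_{j,j+n}x_{j+n}$ of $B_{(T_k)}^n x$ vanishes for every $j\ge 1$ as soon as $n\ge M$ (since $j+n>M$), so $B_{(T_k)}^n x=0\in B(0,1)$ for all large $n$. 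This is precisely the step where the unilateral structure removes the need for any backward hypothesis.

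The only substantive part is the cofiniteness of $N_{B_{(T_k)}}(B(0,\varepsilon),V)$, where I would transcribe the forward half of the proof of Proposition~\ref{prop mixing}. Choosing a finitely supported $y\in V$ with $y_j=0$ for $j>N$ and $\varepsilon>0$ with $B(y,\varepsilon)\subset V$, I would apply (2) to each open set $V_j:=B_j(\frac{N}{\varepsilon}y_j,1)$, $1\le j\le N$, obtaining a single $n_0$ (the maximum over the finitely many $j$) so that for every $n\ge n_0$ there are $y'_{j+n}\in B_{j+n}(0,1)$ with $T_{j,j+n}y'_{j+n}\in V_j$. Assembling $y'$ from these coordinates and rescaling by $\frac{\varepsilon}{N}$ keeps the vector in $B(0,\varepsilon)$, the estimate $\sum_{j=1}^N(\varepsilon/N)^p=\varepsilon^p N^{1-p}\le\varepsilon^p$ (resp. the corresponding $c_0$ bound) being where the normalizing factor $N$ is spent, while forcing $B_{(T_k)}^n(\frac{\varepsilon}{N}y')$ to lie within $\varepsilon$ of $y$, hence inside $V$. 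The main thing to get right is this bookkeeping of the scaling constant, so that all the finitely many target coordinates are matched simultaneously without the norm of the preimage exceeding $1$; everything else is routine and the equivalence follows.
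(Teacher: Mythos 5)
Your proof is correct and is exactly the adaptation the paper has in mind: the paper omits an explicit proof of Proposition~\ref{mixing:uni}, saying only that one adapts the proof of Proposition~\ref{prop mixing} using the dense generalized kernel, and your argument carries out precisely that adaptation (forward half of the bilateral proof verbatim, backward half replaced by the observation that finitely supported vectors have eventually zero orbit). The coordinate identity, the scaling by $\varepsilon/N$, and the reduction via \cite{GrPe10} all match the paper's bilateral argument.
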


We immediately get the following characterization for $B_T$.

\begin{cor}\label{mix,uni,T}
On $c_0(X,\mathbb{Z})$ and $\ell^p(X,\mathbb{Z})$ with $1\le p<\infty$, the following assertions are equivalent:
\begin{enumerate}[\upshape (1)]
\item $B_{T}$ is mixing;
\item for any non-empty open set $U$ in $X$,
\[N_{T}(B_X(0,1),U)\quad \text{is cofinite}.\]
\end{enumerate}
In particular, if $T$ is mixing on $X$ then $B_{T}$ is mixing on $\ell^p(X,\mathbb{N})$ and on $c_0(X,\mathbb{N})$.
\end{cor}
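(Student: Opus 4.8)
The plan is to obtain this corollary as a direct specialization of Proposition~\ref{mixing:uni} to the homogeneous situation $X_k=X$ and $T_k=T$ for all $k$, followed by a reformulation of the resulting condition in terms of the sets $N_T$.

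First I would record the effect of the hypothesis $T_k=T$, $X_k=X$ on the quantities appearing in Proposition~\ref{mixing:uni}. Since $T_{j,j+n}=T_{j+1}\circ\cdots\circ T_{j+n}=T^n$ and every ball $B_{j+n}(0,1)$ is the ball $B_X(0,1)$, condition~(2) of Proposition~\ref{mixing:uni} reads: for every $j\in\mathbb{N}$ and every non-empty open $U_j\subset X$ there exists $n_0$ with $T^n(B_X(0,1))\cap U_j\neq\emptyset$ for all $n\geq n_0$. Because all the coordinate spaces $X_j$ coincide with $X$, the quantification over $j$ is redundant, and this collapses to the single requirement that for every non-empty open $U\subset X$ there exists $n_0$ with $T^n(B_X(0,1))\cap U\neq\emptyset$ for all $n\geq n_0$.

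Second I would note that this last statement is, by definition, the assertion that $N_T(B_X(0,1),U)=\{n\geq 0: T^nB_X(0,1)\cap U\neq\emptyset\}$ is cofinite, which yields the equivalence $(1)\Leftrightarrow(2)$ of the corollary straight from Proposition~\ref{mixing:uni}. For the ``in particular'' part, I would invoke the definition of mixing: if $T$ is mixing on $X$, then $N_T(U',V')$ is cofinite for every pair of non-empty open sets $U',V'$ in $X$; specializing to $U'=B_X(0,1)$ shows that $N_T(B_X(0,1),U)$ is cofinite for every non-empty open $U$, which is exactly condition~(2), hence $B_T$ is mixing on $\ell^p(X,\mathbb{N})$ and on $c_0(X,\mathbb{N})$.

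There is no real obstacle here, since the statement is a specialization of the already-established Proposition~\ref{mixing:uni}. The only points requiring (routine) care are the identification $T_{j,j+n}=T^n$ in the homogeneous case and the translation of the ``$\exists n_0\,\forall n\geq n_0$'' formulation into cofiniteness of $N_T(B_X(0,1),U)$.
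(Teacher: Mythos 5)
Your proposal is correct and follows exactly the route the paper intends: the corollary is stated as an immediate specialization of Proposition~\ref{mixing:uni} to the homogeneous case $X_k=X$, $T_k=T$, with $T_{j,j+n}=T^n$ and the ``$\exists n_0\,\forall n\ge n_0$'' clause rephrased as cofiniteness of $N_T(B_X(0,1),U)$, and the ``in particular'' part obtained by specializing the mixing condition for $T$ to $U'=B_X(0,1)$. Note in passing that the corollary's header should read $c_0(X,\mathbb{N})$ and $\ell^p(X,\mathbb{N})$ (the unilateral case), as your argument and the paper's own ``in particular'' clause make clear.
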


Notice that $B_T$ can be mixing even if $T$ is not hypercyclic. It suffices to consider again $T=2Id$ on $\mathbb{K}$.

\subsection{Chaos}

In view of all the above results, we cannot yet deduce if in general, $B_T$ is quasi-conjugate to $T$ in the unilateral context, and if in general, $T$ is quasi-conjugate to $B_T$ in the bilateral one. The study of chaos will provide us with an answer to these two questions.

\begin{prop}\label{chaos-uni}
The operator $B_{(T_k)}$ is chaotic on $\ell^p((X_k)_k,J)$ with $1\le p<\infty$ (resp. on $c_0((X_k)_k,J)$) if and only if for every $k\in J$, there exists a set $\mathcal{D}_k$ in $X_k$ such that $\text{\emph{span}}(\mathcal{D}_k)$ is dense in $X_k$ and such that for every $x_k\in \mathcal{D}_k$, there exists $(x_n)_{n\ge k+1}$ such that $T_{n}x_n=x_{n-1}$ for any $n\ge k+1$ and such that 
\begin{itemize}
\item if $J=\mathbb{N}$,
\[(0,\cdots,0,x_k,x_{k+1},\cdots)\in \ell^p((X_k)_k,\mathbb{N})\ (\text{resp. $c_0((X_k)_k,\mathbb{N})$}),\]
\item if $J=\mathbb{Z}$, 
\[(\cdots,T_{k-2,k}x_k,T_{k-1,k}x_k,x_k,x_{k+1},\cdots)\in \ell^p((X_k)_k,\mathbb{Z})\ (\text{resp. $c_0((X_k)_k,\mathbb{Z})$}).\]
\end{itemize}
Moreover, if $B_{(T_k)}$ is chaotic on $\ell^p((X_k)_k,J)$ (resp. $c_0((X_k)_k,J)$) then $B_{(T_k)}$ is mixing on $\ell^p((X_k)_k,J)$ (resp. $c_0((X_k)_k,J)$).
\end{prop}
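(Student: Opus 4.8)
The plan is to prove the stated equivalence in both directions and to obtain the \emph{moreover} clause as a by-product: I will show directly that the displayed condition forces $B_{(T_k)}$ to be mixing, so that the chain ``chaotic $\Rightarrow$ condition $\Rightarrow$ mixing'' gives the last assertion for free. Throughout write $K=\sup_{k\in J^*}\|T_k\|=\|B_{(T_k)}\|$ and recall the bound $\|T_{i,n}\|\le K^{\,n-i}$ and the composition law $T_{i,m}T_{m,n}=T_{i,n}$ for $i\le m\le n$. Since chaos means hypercyclicity together with a dense set of periodic points, each direction splits into a \emph{periodic-points} part and, for sufficiency, a \emph{transitivity} part supplied by mixing.

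\emph{Sufficiency.} Assume the condition, fix $k$ and $x_k\in\mathcal{D}_k$ with its chain $(x_n)_{n\ge k}$, $T_nx_n=x_{n-1}$. For each $N\ge 1$ I build a period-$N$ vector $u^{(N)}$ supported on the arithmetic progression $k+N\mathbb{Z}$: set $u^{(N)}_{k+jN}=x_{k+jN}$ for $j\ge 0$, set $u^{(N)}_{k+jN}=T_{k+jN,k}x_k$ for $j<0$ (only in the bilateral case), and $u^{(N)}_i=0$ otherwise. A direct check using $T_{i,i+N}x_{i+N}=x_i$ and $T_{i,i+N}T_{i+N,k}=T_{i,k}$ shows $B^N_{(T_k)}u^{(N)}=u^{(N)}$; membership in $\ell^p$ (resp. $c_0$) holds because the nonzero coordinates form sub-sequences of the single sequence that the hypothesis places in the space. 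As $N\to\infty$ the off-diagonal mass $\sum_{j\ge1}\|x_{k+jN}\|^p+\sum_{j\ge1}\|T_{k-jN,k}x_k\|^p$ is a tail of a convergent series and tends to $0$, so $u^{(N)}\to(\dots,0,x_k,0,\dots)$. Hence every elementary vector whose $k$-th entry lies in $\mathcal{D}_k$ is a limit of periodic points, and by linearity and density of $\mathrm{span}(\mathcal{D}_k)$ the periodic points are dense. To get transitivity I verify the mixing criterion of Proposition~\ref{mixing:uni} (resp. Proposition~\ref{prop mixing}): given an open $U_j$, pick $w=\sum_i c_i x^{(i)}_j\in\mathrm{span}(\mathcal{D}_j)\cap U_j$; then $T_{j,j+n}\big(\sum_i c_i x^{(i)}_{j+n}\big)=w$ with $\|\sum_i c_i x^{(i)}_{j+n}\|\to0$, and in the bilateral case $\|T_{j-n,j}w\|\to0$ as well, since these are respectively forward and backward tails of vectors lying in the space. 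Thus $B_{(T_k)}$ is mixing, in particular hypercyclic, and together with dense periodic points it is chaotic.

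\emph{Necessity.} Suppose $B_{(T_k)}$ is chaotic and let $P$ be its dense set of periodic points. Put $\mathcal{D}_k=\{u_k:u\in P\}=P_k(P)$; as $P_k$ is a bounded linear surjection between Banach spaces it is open, so $P_k(P)$, hence its span, is dense in $X_k$. Fix $x_k=u_k$ with $B^N_{(T_k)}u=u$. The difficulty is that periodicity yields only the step-$N$ relations $T_{i,i+N}u_{i+N}=u_i$, whereas I need a step-$1$ chain; I recover one block by block, setting $x_n=T_{n,\,k+(j+1)N}\,u_{k+(j+1)N}$ for $k+jN\le n\le k+(j+1)N$. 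Then $T_nx_n=x_{n-1}$ inside each block, the endpoints $x_{k+jN}=u_{k+jN}$ match at the junctions, and $x_k=T_{k,k+N}u_{k+N}=u_k$. For the tail I use $\|x_n\|\le K^{\,k+(j+1)N-n}\|u_{k+(j+1)N}\|\le K^{N}\|u_{k+(j+1)N}\|$, so $\sum_{n\ge k}\|x_n\|^p\le NK^{Np}\sum_{j\ge1}\|u_{k+jN}\|^p<\infty$ (and $\|x_n\|\to0$ in the $c_0$ case), placing $(0,\dots,x_k,x_{k+1},\dots)$ in the space. In the bilateral case the hypothesis prescribes the left tail $T_{k-m,k}x_k$, and since $u_{k-jN}=T_{k-jN,k}u_k=T_{k-jN,k}x_k$ while $\|T_{k-m,k}x_k\|\le K^{N}\|u_{k-jN}\|$ for $jN\le m<(j+1)N$, summability of that tail follows from $u\in\ell^p$ (resp. $u\in c_0$) exactly as above.

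The main obstacle is the necessity step: turning the period-$N$ information carried by a single periodic vector into a genuine one-step backward chain while simultaneously controlling its $\ell^p$/$c_0$ tail. The block formula $x_n=T_{n,\,k+(j+1)N}u_{k+(j+1)N}$ resolves the first issue, and the uniform estimate $\|T_{i,n}\|\le K^{\,n-i}$ over blocks of fixed length $N$ resolves the second; the bilateral left tail needs the extra observation that it coincides, up to boundedly many applications of the $T_k$, with the coordinates of $u$ itself. Everything else (periodicity of the explicit $u^{(N)}$, convergence to elementary vectors, and the mixing verification) is routine.
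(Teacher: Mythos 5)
Your proof is correct. The sufficiency half (building period-$N$ vectors supported on the progression $k+N\mathbb{Z}$ from a single chain, letting $N\to\infty$ to approximate elementary vectors, and checking the mixing criterion of Propositions~\ref{mixing:uni}/\ref{prop mixing} directly from the chains) is essentially the paper's argument. The necessity half is genuinely different. The paper sparsifies a periodic point $x$ of period $d$ to the progression $i+d\mathbb{Z}$ and then applies $I+B_{(T_k)}+\dots+B_{(T_k)}^{d-1}$ to produce a \emph{fixed point} $z$ with $z_i=x_i$; the coordinates of $z$ then form the required step-$1$ chain with no further estimates, since $z$ already lies in the space. You instead keep the periodic point $u$ and interpolate a step-$1$ chain block by block via $x_n=T_{n,\,k+(j+1)N}u_{k+(j+1)N}$, paying for it with the uniform bound $\|T_{i,n}\|\le K^{\,n-i}$ over blocks of length $N$ to control the $\ell^p$/$c_0$ tails, together with the identification $u_{k-jN}=T_{k-jN,k}u_k$ on the left. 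Both are valid; the averaging trick is shorter and yields the stronger structural fact that chaos forces a dense set of $i$-th coordinates of \emph{fixed points}, which the paper reuses later (in Proposition~\ref{chaosfhc}, Proposition~\ref{mixc0} and Theorem~\ref{kitai Thm}), whereas your block interpolation is self-contained and makes explicit where the boundedness of $B_{(T_k)}$ enters. Two cosmetic points: the estimate $K^{\,k+(j+1)N-n}\le K^N$ tacitly uses $K\ge 1$, which is automatic since a chaotic operator has norm greater than $1$; and density of $P_k(P)$ needs only continuity and surjectivity of $P_k$, not the open mapping theorem.
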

\begin{proof}
We perform the proof in the bilateral case. Let us assume that $B_{(T_k)}$ is chaotic. Let $i\in \mathbb{Z}$ and $U_i$ be a non-empty open subset of $X_i$. By density of the periodic points, there exists a periodic point $(x_j)_{j\in \mathbb{Z}}$ for $B_{(T_k)}$ such that $x_i\in U_i$. Let $d$ be the period of $(x_j)_{j\in \mathbb{Z}}$ for $B_{(T_k)}$. We consider the sequence $(y_j)_{j\in\mathbb{Z}}$ defined by
$$
y_j=\begin{cases}
x_j, \quad \text{if} \quad j\in i+d\mathbb{Z},\\
0, \quad \text{otherwise}.
\end{cases}
$$
The sequence $(y_j)_{j\in \mathbb{Z}}$ is thus also a periodic point with period $d$ for $B_{(T_k)}$ and by letting
$$
(z_j)_{j\in \mathbb{Z}}:=(I+B_{(T_k)}+\dots +B_{(T_k)}^{d-1})((y_j)_{j\in \mathbb{Z}}),
$$
we get a fixed point for $B_{(T_k)}$ with $z_i=x_i\in U_i$. We then get the first implication.\\

On the other hand, we first observe that we may assume that $\mathcal{D}_k$ is dense for every $k$ (since the validity of the assumptions for $\mathcal{D}_k$ implies their validity for $\text{span}(\mathcal{D}_k)$). Now, let $U_i$ be a non-empty open set of $X_i$ and $x_i\in \mathcal{D}_i\cap U_i$. There exists a sequence  $(x_n)_{n\ge i+1}$ such that $T_{n}x_n=x_{n-1}$ for any $n\ge i+1$ and such that 
\[y:=(\cdots,T_{i-2,i}x_i,T_{i-1,i}x_i,x_i,x_{i+1},\cdots)\in \ell^p((X_k)_k,\mathbb{Z}))\ (\text{resp. $c_0((X_k)_k,\mathbb{Z})$}).\] For every $d\ge 1$, the sequence $y^{(d)}$ defined by
$$
y_j^{(d)}=\begin{cases}
y_j, \quad \text{if} \quad j\in i+d\mathbb{Z},\\
0, \quad \text{otherwise}
\end{cases}
$$
is a periodic point for $B_{(T_k)}$ of period $d$ and $(y^{(d)})_d$ tends to the sequence $(\dots,0,x_i,0,\dots)$ as $d\rightarrow \infty$ in $\ell^p((X_k)_k,\mathbb{Z}))$ (resp. in $c_0((X_k)_k,\mathbb{Z})$). Since the finitely supported sequences are dense in $\ell^p((X_k)_k,\mathbb{Z}))$ (resp. in $c_0((X_k)_k,\mathbb{Z})$) and the set of periodic points form a linear subspace, we may conclude that $B_{(T_k)}$ has a dense set of periodic points. Finally, we observe that if $U_i$ is a non-empty open subset of $X_i$, we have by considering a fixed point $x$ for $B_{(T_k)}$ with $x_i\in U_i$ that there exists $n_0$ such that for every $n\ge n_0$,
\[T_{i,i+n}B_{i+n}(0,1)\cap U_i\ne \emptyset.\]
In the same way, if $V_i$ is a non-empty open subset of $X_i$, we have by considering a fixed point $x$ for $B_{(T_k)}$ with $x_i\in V_i$ that there exists $n_0$ such that for every $n\ge n_0$,
\[ T_{i-n,i}V_i\cap B_{i-n}(0,1)\ne \emptyset.\]
Hence $B_{(T_k)}$ is hypercyclic and even mixing by Proposition~\ref{prop mixing}.
\end{proof}

\begin{cor}\label{cor-chaos-uni}
The operator $B_{T}$ is chaotic on $\ell^p(X,J)$ with $1\le p<\infty$ (resp. on $c_0(X,J)$) if and only if there exists a set $\mathcal{D}$ in $X$ such that $\text{\emph{span}}(\mathcal{D})$ is dense in $X$ and such that for every $x_1\in \mathcal{D}$, there exists $(x_n)_{n\ge 2}$ such that $T_{n}x_n=x_{n-1}$ for any $n\ge 2$ and such that 
\begin{itemize}
\item if $J=\mathbb{N}$,
\[(x_1,x_{2},\cdots)\in \ell^p(X,\mathbb{N})\ (\text{resp. $c_0(X,\mathbb{N})$}),\]
\item if $J=\mathbb{Z}$, 
\[(\cdots,T^2x_1,Tx_1,x_1,x_{2},\cdots)\in \ell^p(X,\mathbb{Z})\ (\text{resp. $c_0(X,\mathbb{Z})$}).\]
\end{itemize}
\end{cor}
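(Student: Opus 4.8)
The corollary is the specialization of Proposition~\ref{chaos-uni} to the case $T_k=T$ and $X_k=X$ for all $k$. My plan is to simply check that the two conditions match once we substitute $T_k=T$, handling the $\mathbb{N}$ and $\mathbb{Z}$ cases separately, and to observe that the per-coordinate family $(\mathcal{D}_k)_{k}$ from the Proposition can be collapsed to a single set $\mathcal{D}$ in $X$. The only genuine content is verifying that nothing is lost in this collapse, i.e.\ that producing one dense-span set $\mathcal{D}\subset X$ suffices to recover dense-span sets $\mathcal{D}_k$ at \emph{every} index $k$.

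\begin{proof}
Since $X_k=X$ and $T_k=T$ for every $k$, we have $T_{k,n}=T^{n-k}$, and the condition of Proposition~\ref{chaos-uni} reads: for every $k\in J$ there is a set $\mathcal{D}_k\subset X$ with $\text{span}(\mathcal{D}_k)$ dense in $X$, such that each $x_k\in\mathcal{D}_k$ admits a sequence $(x_n)_{n\ge k+1}$ with $Tx_n=x_{n-1}$ and with the appropriate summability of the corresponding sequence (which, after substituting $T_{k,n}=T^{n-k}$, is exactly the displayed condition of the present corollary written with base index $k$ instead of $1$).

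It therefore suffices to show that the family $(\mathcal{D}_k)_{k\in J}$ can be replaced by a single set $\mathcal{D}\subset X$ used at index $1$ (in the $\mathbb{N}$ case) or at index $0$, say, in the $\mathbb{Z}$ case. One direction is immediate: given a single $\mathcal{D}$ satisfying the stated condition at the base index, set $\mathcal{D}_k=\mathcal{D}$ for every $k$; for $x_k=x_1\in\mathcal{D}$ the prescribed forward sequence $(x_n)_{n\ge 2}$ just gets re-indexed to start at $k+1$, and the summability tail condition is even easier to satisfy at a larger base index since the sequence is merely shifted. Conversely, by translation invariance of the hypotheses under the index shift—the spaces and operators being constant in $k$—the set $\mathcal{D}_1$ (resp.\ $\mathcal{D}_0$) already witnesses the condition for \emph{every} $k$ after re-indexing, so one may take $\mathcal{D}=\mathcal{D}_1$ (resp.\ $\mathcal{D}_0$). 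The two displayed summability conditions in the corollary are obtained verbatim from those in Proposition~\ref{chaos-uni} by writing $T_{i-j,i}x_i=T^{j}x_i$ and specializing $i$ to the base index.

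The main point to check carefully is precisely this index-shift invariance in the unilateral ($J=\mathbb{N}$) case, where the summability requirement $(0,\dots,0,x_k,x_{k+1},\dots)\in\ell^p$ (resp.\ $c_0$) depends a priori on $k$ through the number of leading zeros; but since finitely many leading zeros affect neither $\ell^p$-summability nor the $c_0$-condition, the requirement at base index $k$ is equivalent to the requirement at base index $1$, and the single set $\mathcal{D}=\mathcal{D}_1$ works throughout. This completes the equivalence, and the chaos-implies-mixing conclusion is inherited directly from Proposition~\ref{chaos-uni}.
\end{proof}
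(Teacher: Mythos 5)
Your proof is correct and follows the same route the paper intends: the corollary is stated there without proof as an immediate specialization of Proposition~\ref{chaos-uni} to the constant case $T_k=T$, $X_k=X$. Your verification that the family $(\mathcal{D}_k)_k$ collapses to a single $\mathcal{D}$ via the index-shift invariance of the $\ell^p$- and $c_0$-membership conditions is exactly the (routine) point the paper leaves implicit.
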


We remark that unlike the properties of hypercyclicity, weak mixing and mixing, chaos cannot always be transferred from $T$ to $B_T$ on $c_0(X,\mathbb{N})$ or $\ell^p(X,\mathbb{N})$.

\begin{prop}
There exists an operator $T$ on a Hilbert space $X$ such that $T$ is chaotic but the only periodic point of $B_T$ in $\ell^p(X,J)$ for any $1\le p<\infty$ or $c_0(X,J)$ is zero. In particular, $T$ is chaotic on $X$ but $B_T$ is not chaotic on $\ell^p(X,J)$ for any $1\le p<\infty$ nor on $c_0(X,J)$.
\end{prop}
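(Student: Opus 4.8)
The plan is to reduce the statement to a property of the single operator $T$ concerning its \emph{backward chains}, and then to exhibit a chaotic $T$ whose spectral data lies entirely on the unit circle.

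First I would record the following reduction. Suppose $(x_k)_{k\in J}$ is a periodic point of $B_T$ of period $d$ in $\ell^p(X,J)$ or $c_0(X,J)$. Since $(B_T^d x)_k = T^d x_{k+d}$, periodicity means $T^d x_{k+d}=x_k$ for all $k$. Fixing any $k$ and setting $v_m := x_{k+md}$, the sequence $(v_m)_{m\ge 0}$ satisfies $T^d v_{m+1}=v_m$, and because $x\in c_0(X,J)$ (a fortiori if $x\in\ell^p(X,J)$) we have $\|v_m\|\to 0$ (using the tail towards $+\infty$ in the bilateral case). Thus every coordinate of a periodic point is the initial term of a \emph{null backward $T^d$-chain}, i.e. a sequence $(v_m)$ with $T^d v_{m+1}=v_m$ and $\|v_m\|\to 0$. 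Consequently it suffices to construct a chaotic operator $T$ with the property
\[(\star)\qquad \text{for every } d\ge 1,\ \text{the only null backward } T^d\text{-chain is } 0.\]
Indeed $(\star)$ forces $x_k=0$ for every $k$, so $0$ is the only periodic point of $B_T$; and since a chaotic operator on the nonzero space $\ell^p(X,J)$ (resp. $c_0(X,J)$) would need a dense, hence nonzero, set of periodic points, $B_T$ cannot be chaotic.

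The key device to guarantee $(\star)$ is a pairing argument with the adjoint. I would require that the eigenvectors of $T^*$ attached to \emph{unimodular} eigenvalues form a total set in $X$. Telescoping a backward chain gives $v_0=T^{dm}v_m$, so for any $w$ with $T^*w=\mu w$, $|\mu|=1$,
\[|\langle v_0,w\rangle| = |\langle T^{dm}v_m,w\rangle| = |\langle v_m,(T^*)^{dm}w\rangle| = |\mu|^{dm}\,|\langle v_m,w\rangle| \le \|v_m\|\,\|w\|,\]
and the right-hand side tends to $0$ as $m\to\infty$. Hence $\langle v_0,w\rangle=0$ for all such $w$, and totality yields $v_0=0$; applying this to every tail of the chain gives $v_m\equiv 0$, which is exactly $(\star)$. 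One checks for consistency that totality of the unimodular eigenvectors of $T^*$ automatically precludes $T$ from having any eigenvalue of modulus $>1$ (an eigenvector $u$ of $T$ with $Tu=\lambda u$ satisfies $\lambda\langle u,w\rangle=\bar\mu\langle u,w\rangle$, forcing $\langle u,w\rangle=0$ when $|\lambda|\neq 1$), which is precisely the mechanism that would otherwise manufacture summable backward chains and thus nonzero fixed points of $B_T$.

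It then remains to produce a chaotic operator $T$ on a Hilbert space whose adjoint has a total set of unimodular eigenvectors. Here I would take the Kalisch operator on $L^2(0,2\pi)$ (see \cite{BM09}): it is chaotic, indeed frequently hypercyclic, its point spectrum is the whole unit circle, the eigenvectors attached to roots of unity span a dense subspace (giving the dense periodic points of $T$), and the same Volterra-type structure shows that $T^*$ likewise possesses a total family of eigenvectors with unimodular eigenvalues. Feeding this $T$ into the pairing argument establishes $(\star)$ and completes the proof. The main obstacle is exactly this last construction: for a \emph{generic} chaotic (or merely hypercyclic) operator the Godefroy--Shapiro mechanism supplies eigenvectors of modulus $>1$, whose geometric backward orbits are summable and hence produce nonzero periodic points of $B_T$; the whole difficulty is to realize chaos while confining all of the spectral mass of $T$ and of $T^*$ to the unit circle, so that no nonzero vector admits a null backward orbit.
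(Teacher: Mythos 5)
Your reduction of the problem to property $(\star)$ --- that every coordinate of a periodic point of $B_T$ of period $d$ generates a backward $T^d$-chain tending to $0$ in norm, so it suffices to find a chaotic $T$ admitting no nonzero null backward chain --- is correct, and it is exactly the mechanism underlying the paper's proof. The fatal problem is the device you propose to establish $(\star)$. You require that the eigenvectors of $T^*$ associated with unimodular eigenvalues form a total set in $X$. But a classical and basic fact of linear dynamics (see e.g.\ \cite{BM09}, Proposition 1.17, or \cite{GrPe11}) is that \emph{the adjoint of a hypercyclic operator has empty point spectrum}: if $T^*w=\mu w$ with $w\ne 0$ and $x$ is hypercyclic for $T$, then $\langle T^nx,w\rangle=\bar\mu^{\,n}\langle x,w\rangle$, and this set of scalars is never dense in $\mathbb{K}$, contradicting the density of $\operatorname{Orb}(x,T)$ composed with the open surjection $w$. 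Hence for \emph{any} chaotic (indeed any hypercyclic) $T$ the set of unimodular eigenvectors of $T^*$ is empty, so it can never be total, and your hypothesis is vacuous. In particular the assertion that the Kalisch operator's adjoint ``possesses a total family of eigenvectors with unimodular eigenvalues'' is false for this reason --- the Kalisch operator is frequently hypercyclic, so $\sigma_p(T^*)=\emptyset$. The pairing computation itself is formally fine; it is the existence part that cannot be met, so the proof cannot be completed along these lines.

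The paper obtains $(\star)$ by a norm estimate rather than a duality estimate: it invokes Badea--Grivaux \cite[Corollary 4.7]{BaGri}, which provides a chaotic operator $S_\delta$ on a Hilbert space with $\sup_k\|S_\delta^{n_k}\|<\infty$ along some increasing sequence $(n_k)$ (hence $\sup_k\|S_\delta^{n_kd}\|<\infty$ for each fixed $d$), and then for a periodic point $x$ of $B_{S_\delta}$ of period $d$ writes $\|x_n\|=\|S_\delta^{n_kd}x_{n+n_kd}\|\le\|S_\delta^{n_kd}\|\,\|x_{n+n_kd}\|\to 0$. If you want to keep a duality flavour, the correct weakening of your hypothesis would be a total set of functionals $w$ with $\sup_k\|(T^*)^{n_k}w\|<\infty$ along a subsequence --- which is implied by, and in practice obtained from, the Badea--Grivaux bound --- not eigenvectors of $T^*$, which cannot exist.
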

\begin{proof}
We know thanks to Badea and Grivaux \cite[Corollary 4.7]{BaGri} that there exists a chaotic operator $S_{\delta}$ on a Hilbert space $X$ such that $\sup_k\|S_{\delta}^{n_k}\|<\infty$ for an increasing sequence $(n_k)$. It implies that for any $d\ge 1$,  $\sup_k\|S_{\delta}^{n_kd}\|<\infty$. If we now assume that $x$ is a periodic point for $B_{S_{\delta}}$ of period $d$ then for any $k,n\in \mathbb{N}$, we get 
\[\|x_n\|=\|S^{n_kd}_{\delta}x_{n+n_kd}\|\le \|S_{\delta}^{n_kd}\| \|x_{n+n_kd}\|\xrightarrow[k\to \infty]{} 0  \]
and thus $x=0$.
\end{proof}

In particular, the above proposition implies that for some operator $T$, the operator \textbf{$B_T$ is not quasi-conjugate to $T$ on $\ell^p(X,\mathbb{N})$ or $c_0(X,\mathbb{N})$}. Obviously, by considering $T=2 \text{Id}$ on $\mathbb{K}$, we also have an example of operator $T$ such that $B_T$ is chaotic on $\ell^p(X,\mathbb{N})$ or $c_0(X,\mathbb{N})$ but $T$ is not. One can wonder if when we consider the bilateral case, under the condition that $B_T$ is chaotic, we can deduce that $T$ is chaotic. We will see that the answer will depend on the value of $p$.

\begin{prop}
There exist a Banach space $X$ and an operator $T$ on $X$ such that $B_T$ is chaotic on $\ell^p(X,\mathbb{Z})$ for any $1<p<\infty$ and on $c_0(X,\mathbb{Z})$ but $T$ is not chaotic on $X$. 
\end{prop}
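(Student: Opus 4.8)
The plan is to produce $T$ as a weighted backward shift on the \emph{fixed} space $X=\ell^1(\mathbb{N})$ and to read off chaos of $B_T$ directly from the characterization in Corollary~\ref{cor-chaos-uni}. The guiding observation is that chaos of $B_T$ on $\ell^p(X,\mathbb{Z})$ only requires the two-sided orbit through a dense set to be $p$-th power summable \emph{in norm}, whereas for $T$ itself to be chaotic one needs the backward orbit vectors to be genuinely summable in $X$, i.e. $1$-summable in norm. For $p>1$ the former is strictly weaker than the latter, and the boundary exponent $1$ for the ground space is chosen so that $T$ sits precisely in this gap.

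Concretely, let $X=\ell^1(\mathbb{N})$ with canonical basis $(e_n)_{n\ge 0}$ and let $T=B_w$ be the backward shift $B_w e_n=w_n e_{n-1}$, $B_w e_0=0$, with weights $w_n=\frac{n+1}{n}$, so that $\prod_{k=1}^n w_k=n+1$ and $\sup_n w_n=2<\infty$; thus $T$ is a bounded operator. To verify that $B_T$ is chaotic I would take $\mathcal{D}=\{e_n:n\ge 0\}$, whose span is dense in $\ell^1(\mathbb{N})$. For $x_1=e_n$ the forward iterates $T^k e_n=(w_n\cdots w_{n-k+1})\,e_{n-k}$ vanish for $k>n$, so the ``past'' of the two-sided sequence is finitely supported. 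A backward orbit is furnished by $x_m=\frac{n+1}{n+m}\,e_{n+m-1}$, for which one checks by a telescoping computation that $Tx_m=x_{m-1}$; its $\ell^1$-norm is $\frac{n+1}{n+m}$. Hence $\sum_{m\ge 2}\|x_m\|^p<\infty$ for every $p>1$ and $\|x_m\|\to 0$, so the sequence $(\dots,T^2e_n,Te_n,e_n,x_2,x_3,\dots)$ lies in $\ell^p(X,\mathbb{Z})$ for all $1<p<\infty$ and in $c_0(X,\mathbb{Z})$. By Corollary~\ref{cor-chaos-uni}, $B_T$ is then chaotic on each of these spaces.

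It remains to show that $T$ is not chaotic on $X=\ell^1(\mathbb{N})$, and here I would argue that $T$ has no nonzero periodic point, so that its periodic points are certainly not dense. A point $u$ of period $d$ satisfies $B_w^d u=u$, which along each residue class modulo $d$ forces $|u_{n+md}|=|u_n|\,\frac{n+1}{n+md+1}$ after telescoping. Since $\sum_m \frac{n+1}{n+md+1}=\infty$, membership $u\in\ell^1$ requires $u_n=0$ for every representative $n$, whence $u=0$. Thus the only periodic point of $T$ is $0$ and $T$ fails to be chaotic.

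The essential point — and the only place where the construction must be tuned — is the pairing of the ground exponent $1$ with the normalization $\prod_{k=1}^n w_k\sim n$: this makes the backward-orbit norms decay exactly like $1/m$, which is $p$-summable for every $p>1$ (and null, for the $c_0$-case) but not $1$-summable. The $p$-summability is what delivers chaos of $B_T$ through Corollary~\ref{cor-chaos-uni}, while the failure of $1$-summability is exactly what forces the periodic points of $T$ to reduce to $0$. Everything beyond this balancing is a routine telescoping verification.
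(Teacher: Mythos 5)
Your proposal is correct and follows essentially the same strategy as the paper: a weighted shift on an $\ell^1$ ground space whose relevant orbit norms decay like $1/n$ — hence $p$-summable for every $p>1$ and null, but not $1$-summable — verified against Corollary~\ref{cor-chaos-uni}, with non-chaoticity of $T$ coming from the failure of $1$-summability. The only (harmless) difference is that the paper uses a \emph{bilateral} shift on $\ell^1(\mathbb{K},\mathbb{Z})$ and places the harmonic decay on the forward orbit $T^n x_1$ while the backward orbit decays geometrically, whereas you use a unilateral shift so the forward orbit is eventually zero and the harmonic decay sits on the backward orbit; both computations are sound.
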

\begin{proof}
Let $X=\ell^1(\mathbb{K},\mathbb{Z})$ and $T$ the bilateral weighted shift $B_w$ with $w_n=2$ if $n\ge -1$ and $w_n=\frac{n+1}{n}$ otherwise. We then have that for every $n\ge 2$ \[w_{-n}\cdots w_{-2}=1/n.\] The operator $T$ is therefore not chaotic on $X$ since the series $\sum_{n=2}^{\infty} |w_{-n}\cdots w_{-2}|$ is divergent. 
However, we can deduce from Corollary~\ref{cor-chaos-uni} that $B_T$ is chaotic on $\ell^p(X,\mathbb{Z})$ for any $1<p<\infty$ and on $c_0(X,\mathbb{Z})$. Indeed, if we consider $\mathcal{D}=c_{00}(\mathbb{K},\mathbb{Z})$ and $x_1\in \mathcal{D}$ then we can let $x_n=F^{n-1}_v x_1$ for every $n\ge 2$ where $F_v$ is the forward shift associated to the weights $v_n=1/w_{n+1}$ so that $B_w F_v=Id$. It follows that \[(\cdots,B_w^2x_1,B_w x_1,x_1,x_2,\cdots)\in \ell^p(X,\mathbb{Z})\ \text{for every $p>1$}\]
since for $n$ sufficiently big, $\|x_{n+1}\|= \|x_n\|/2$ and $\|B_w^nx_1\|\le C\frac{\|x_1\|}{n}$ for some constant $C>0$.
\end{proof}
We deduce from the previous result that in general, \textbf{$T$ is not quasi-conjugate to $B_T$ on $\ell^p(X,\mathbb{Z})$ for $1<p<\infty$ or on $c_0(X,\mathbb{Z})$}. However, the situation is surprisingly different on $\ell_1(X,\mathbb{Z})$.

\begin{prop}\label{conj-l^1}
Every operator $T$ on $X$ is quasi-conjugate to $B_T$ on $\ell^1(X,\mathbb{Z})$. 
\end{prop}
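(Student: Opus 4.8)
The plan is to produce an explicit quasi-factor map. By definition it suffices to exhibit a continuous linear map $\phi\colon \ell^1(X,\mathbb{Z})\to X$ with dense range satisfying $\phi\circ B_T=T\circ\phi$, in the same spirit as the construction behind Proposition~\ref{comparison}. The candidate I would try first is the ``total sum'' map
\[
\phi\big((x_k)_{k\in\mathbb{Z}}\big)=\sum_{k\in\mathbb{Z}}x_k,
\]
the key point being that this series makes sense precisely on $\ell^1$.

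First I would check that $\phi$ is well defined and contractive. Since $X$ is complete and $\sum_{k\in\mathbb{Z}}\|x_k\|=\|(x_k)_k\|<\infty$ for every $(x_k)_k\in\ell^1(X,\mathbb{Z})$, the series $\sum_k x_k$ converges absolutely, hence converges; moreover $\|\phi((x_k)_k)\|\le\sum_k\|x_k\|=\|(x_k)_k\|$, so $\phi$ is a linear contraction. This is the only step that uses $p=1$ in an essential way: for $p>1$, or on $c_0$, a sequence in the sum need not be absolutely summable, so this $\phi$ would not even be well defined — consistent with the failure of quasi-conjugacy recorded just above for those cases.

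It then remains to verify the intertwining relation and the density of the range. For the former, using that $B_T$ sends $(x_k)_k$ to $(Tx_{k+1})_k$ and that $T$ is continuous, I would compute
\[
\phi(B_Tx)=\sum_{k\in\mathbb{Z}}Tx_{k+1}=T\sum_{k\in\mathbb{Z}}x_{k+1}=T\sum_{k\in\mathbb{Z}}x_k=T\phi(x),
\]
where the middle equality pulls the bounded operator $T$ through the convergent series and the reindexing $\sum_k x_{k+1}=\sum_k x_k$ is legitimate because the sum is bilateral and absolutely convergent. For the range, I would observe that $\phi$ is in fact surjective: given $y\in X$, the sequence supported at the single index $0$ with value $y$ is sent by $\phi$ to $y$. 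Hence $\phi$ is a continuous surjection intertwining $B_T$ and $T$, yielding the claimed quasi-conjugacy. I do not expect a genuine obstacle here: once the map is written down every verification is routine, and the only subtle point is recognizing that absolute summability on $\ell^1$ is exactly what simultaneously legitimizes the definition of $\phi$ and the interchange of $T$ with the infinite sum.
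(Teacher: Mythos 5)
Your proposal is correct and coincides with the paper's own proof: the authors use exactly the same total-sum map $\phi((x_n)_{n\in\mathbb{Z}})=\sum_{n\in\mathbb{Z}}x_n$, noting that it is a linear surjective contraction intertwining $B_T$ and $T$ precisely because absolute summability on $\ell^1$ makes the series well defined. Your write-up merely spells out the routine verifications in more detail.
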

\begin{proof}
Let $Y=\ell^{1}(X,\mathbb{Z})$. It suffices to consider the continuous map $\phi:Y\to X$ given by
    $$
    \phi:(x_n)_{n\in \mathbb{Z}} \mapsto \sum_{n\in \mathbb{Z}} x_n.
    $$
Since $(x_n)_{n\in \mathbb{Z}}\in \ell^1(X,\mathbb{Z})$, the series $\sum_{n\in \mathbb{Z}} x_n$ is well-defined and the map $\phi$ is actually a linear surjective contraction, which satisfies that $\phi\circ B_{T}=T\circ \phi$.
\end{proof}

We have seen that if $B_{(T_k)}$ is chaotic then $B_{(T_k)}$ is mixing. For the usual unilateral weighted backward shifts acting on $c_0(\mathbb{K},J)$, the converse is also true. We end up this section by showing that this equivalence is not true for $B_T$ on $c_0(X,J)$.  Notice that on $\ell^p(X,J)$ with $1\le p<\infty$, we can easily get such an example by considering $T=B_w$ the unilateral weighted shift with $w_n=(\frac{n+1}{n})^{\frac{1}{p}}$ on $X=\ell^p(\mathbb{K},\mathbb{N})$. Indeed, since $T$ is mixing, $B_T$ is mixing on $\ell^p(X,J)$ (Corollary~\ref{mix,uni,T}) but since $T$ is not chaotic, $B_T$ cannot be chaotic on $\ell^p(X,J)$ by Proposition~\ref{comparison} and Remark~\ref{comprem}. Our approach for $c_0(X,\mathbb{N})$ is similar except that we will not find such a counter-example by considering unilateral weighted shifts on $c_0(\mathbb{K},\mathbb{N})$. However we will find such a counterexample in the family of weighted shifts on directed trees. Such an example can already be found in \cite[Example 9.10 (b)]{GrPa24} under additional assumptions such as never having infinitely many children. We give here another example relying on the contrary on this possibility of trees to have infinitely many branches from a vertex. Moreover, this example will also be used in the proof of Theorem~\ref{kitai Thm}. 

\begin{prop}\label{mixc0}
There exists a rooted directed tree $(V,E)$ and a weighted shift on $c_0(V)$ that is mixing but not chaotic. Consequently, there exists a Banach space $X$ and an operator $T$ on $X$ such that $B_T$ is mixing on $c_0(X,J)$ but not chaotic on $c_0(X,J)$.
\end{prop}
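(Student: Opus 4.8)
The second (``Consequently'') statement follows from the first together with the earlier results. Suppose we have a weighted shift $T$ on a rooted directed tree $(V,E)$ that is mixing but not chaotic, and set $X=c_0(V)$. Since $T$ is mixing, $B_T$ is mixing on $c_0(X,J)$ (Corollary~\ref{mix,uni,T} for $J=\mathbb{N}$ and Corollary~\ref{bilmix} for $J=\mathbb{Z}$); and since a weighted shift on a tree is quasi-conjugate to $B_T$ on $c_0(X,J)$ (the observation following Remark~\ref{comprem}) and chaos is inherited by quasi-factors, the fact that $T$ is not chaotic forces $B_T$ not to be chaotic. So everything reduces to constructing the tree.

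I would first translate the two relevant properties of the tree shift $S_\lambda$ on $c_0(V)$ --- which is exactly the operator $B_{(T_k)}$ with $X_k=c_0(V_k)$, $V_k$ the $k$-th generation --- into combinatorial conditions. Writing $P(u\to w)$ for the product of the weights along the path joining a descendant $u$ to an ancestor $w$, Proposition~\ref{mixing:uni} shows that $S_\lambda$ is mixing iff for every vertex $w$ one has $M_w(n):=\max\{P(u\to w): u \text{ a descendant of } w \text{ at distance } n\}\to\infty$. For chaos I would use Proposition~\ref{chaos-uni}: a $c_0$ backward orbit of a vector $x_k\in X_k$ is precisely a family of $c_0$ fixed points of the subtree shifts rooted at the generation-$k$ vertices. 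Hence, if no subtree (rooted at any vertex) carries a nonzero $c_0$ fixed point, then $\mathrm{span}(\mathcal{D}_k)=\{0\}$ for any admissible $\mathcal{D}_k$, so $S_\lambda$ cannot be chaotic; this lets me sidestep periodic points of higher period altogether.

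For the construction, the tree alternates ``base'' vertices and ``spikes''. Every base vertex $\beta$ has infinitely many children, one starting each spike $S_m$ ($m\ge 1$), where $S_m$ is a finite path of $2m$ edges: a launch edge of weight $\nu_m=m2^{-m}$, then $m-1$ climbing edges of weight $2$, then $m$ descending edges of weight $1/2$, ending again at a base vertex. Thus $\sup_v\sum_{u}\lambda_u=2$ (sum over the children $u$ of $v$), so $S_\lambda$ is bounded; the product from $\beta$ to the peak of $S_m$ is $m/2\to\infty$, while the net product over a whole spike is $\mu_m:=\nu_m/2=m2^{-m-1}$ with $\sum_{m\ge1}\mu_m=1$. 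Mixing is then straightforward: from any base, the peak of $S_n$ is a descendant at distance $n$ of product $n/2$; from a mid-spike vertex one first runs the remaining spike to the next base and then launches a tall spike, so $M_w(n)\to\infty$ for every $w$.

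The heart of the matter --- and the step I expect to be the main obstacle --- is proving that no subtree carries a nonzero $c_0$ fixed point, i.e. that mixing does not drag chaos along. Here infinite branching is essential: for a finitely branching tree, K\"onig's lemma would assemble the depth-$n$ witnesses of mixing into a single infinite branch along which $P\to\infty$, producing a $c_0$ fixed point. The plan is to substitute $z_v=P(v\to r)\,x_v$, turning the fixed-point equation into the additive relation $z_v=\sum_{u}z_u$ (sum over the children $u$ of $v$); since every interior spike vertex has a single child, $z$ is constant along each spike and therefore descends to an additive function on the base tree, which is the stationary $\mathbb{N}$-ary tree with weights $\mu_m$. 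As $x\in c_0$, the induced function $x^{\mathrm{base}}$ lies in $c_0$ and attains its sup-norm $M'$ at some base $\beta^*$; then $M'=\bigl|\sum_m\mu_m x^{\mathrm{base}}_{\beta^*_m}\bigr|\le\sum_m\mu_m M'=M'$, and equality (using $\sum_m\mu_m=1$ and $\mu_m>0$ for every $m$) forces $|x^{\mathrm{base}}_{\beta^*_m}|=M'$ at infinitely many vertices, contradicting membership in $c_0$ unless $M'=0$. Hence $z\equiv 0$ and $x=0$. The same argument applies verbatim to every base-rooted subtree, since they are all isomorphic to the whole tree, and, by the constancy of $z$ along spikes, to every spike-rooted subtree. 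This would complete the proof that $S_\lambda$ is mixing but not chaotic.
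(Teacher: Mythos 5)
Your proposal is correct, but both the tree you build and the argument that mixing does not force chaos are genuinely different from the paper's. The paper attaches to each branch end a finite branch of every length, with weight $4$ along the branch and a calibrated small launch weight $2^{-N^{(\cdot)}}$; it then kills chaos by a weighted pigeonhole over the infinitely many children: a fixed point with $|x_v|\ge 1$ at a branch end must have some child-branch coordinate of modulus $\ge 1$ (this is where the condition $N^{(1,n_1,\dots,n_k,1)}\ge 2n_k$ enters), and iterating contradicts $x\in c_0(V)$. Your tree is instead a subdivided $\mathbb{N}$-ary tree of up--down spikes whose net products $\mu_m=m2^{-m-1}$ sum to exactly $1$, and your non-chaos step linearizes the fixed-point equation via $z_v=P(v\to r)x_v$ into the additive relation $z_w=\sum_u z_u$ and runs a maximum principle on the base tree, where the criticality $\sum_m\mu_m=1$ plays the role of the paper's quantitative choice of exponents; your reduction from chaos to ``some subtree carries a nonzero $c_0$ fixed point'' is a valid (indeed slightly stronger) use of Proposition~\ref{chaos-uni}, and both mixing proofs choose a tall branch or spike depending on the iterate. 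Your maximum-principle argument is cleaner and makes transparent why infinite branching is indispensable (the K\"onig's lemma obstruction you mention), whereas the paper's propagation argument is reused verbatim later (in the proof of Theorem~\ref{kitai Thm}) to separate the Hypercyclicity Criterion along $(n)$ from the Kitai Criterion. One small imprecision: on $c_0(V)$, Proposition~\ref{mixing:uni} characterizes mixing by the divergence of $\sum_{u}|P(u\to w)|$ over the descendants $u$ of $w$ at distance $n$, not of the maximum, so your ``iff'' should be a ``if''; since you only invoke the (correct) sufficiency of the max condition, nothing in your proof breaks.
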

\begin{proof}
We first explain the construction of the tree that we will consider. We start by denoting the root $e^{(1)}_{1}$. We then add a finite branch of each size from the root that we will denote $e^{(1,n_1)}_{j}$ for $1\le j\le n_1$. In other words, we have $\text{Chi}(e^{(1)}_{1})=\{e^{(1,n_1)}_{1}: n_1 \ge 1\}$ and for each $1\le j< n_1$, $\text{Chi}(e^{(1,n_1)}_{j})=\{e^{(1,n_1)}_{j+1}\}$. We continue by adding from each vertex $e^{(1,n_1)}_{n_1}$ a finite branch of each size that we will denote $e^{(1,n_1,n_2)}_{j}$ for $1\le j\le n_2$.
At the end, for each $(1,n_1,\dots,n_k)\in\mathcal{N}$, where \[\mathcal{N}=\{(n_0,n_1,\dots,n_k): n_0=1,\ k\ge 0, \ \text{$n_i \in \mathbb{N}$ for any $1\le i\le k$}\},\]
we get
\[\text{Chi}(e^{(1,n_1,\dots,n_k)}_{n_k})=\{e^{(1,n_1,\dots,n_k,n_{k+1})}_{1}: n_{k+1} \ge 1\}\]
and for all $1\le j< n_k$, \[\text{Chi}(e^{(1,n_1,\dots,n_k)}_{j})=\{e^{(1,n_1,\dots,n_k)}_{j+1}\}.\]

 Given $(1,n_1,\dots,n_k)\in\mathcal{N}$, we consider an integer $N^{(1,n_1,\dots,n_k)}\ge 1$ such that for any $k,n$, 
\[N^{(1,n_1,\dots,n_{k},n)}=N^{(1,n_1,\dots,n_{k},1)}+n-1 \quad \text{and}\quad N^{(1,n_1,\dots,n_k,1)}\ge 2n_k.\]
We can now define our weighted shift $T$ on $c_0(V)$ as follows:
\begin{itemize}
\item $Te^{(1)}_{1}=0$,
\item $Te^{(1,n_1,\dots,n_k)}_j=4e^{(1,n_1,\dots,n_k)}_{j-1}$ if $2\le j\leq n_k$,
\item $Te^{(n_0,n_1,\dots,n_k)}_{1}=2^{-N^{(n_0,n_1,\dots,n_{k})}}e^{(n_0,n_1,\dots,n_{k-1})}_{n_{k-1}}$ where $n_0=1$ and $k\ge 1$.
\end{itemize}
We remark that $T$ is bounded since for any $k \ge 1$ 
\[\sum_{n_k=1}^{\infty}2^{-N^{(1,n_1,\dots,n_k)}}\le 1.\]

We first prove that $T$ is mixing. Let $U_1,U_2$ be two non-empty open sets in $c_0(V)$. We consider $x\in U_1\cap c_{00}(V)$ and $y\in U_2\cap c_{00}(V)$. Since our directed tree has a root, it is clear that there exists $N_0$ such that for any $n\ge N_0$, $T^nx=0$. On the other hand, it is possible to find for each $n$ a vector $S_ny$ such that $T^nS_n y=y$ and $S_n y$ tends to $0$ when $n$ tends to infinity. This can be easily done by relying on a different branch for each $n$. Indeed, if $y=\sum_{(n_0,n_1,\dots,n_k)\in \mathcal{F}}\sum_{j=1}^{n_k} y_{(n_0,n_1,\dots,n_k),j}e^{(n_0,n_1,\dots,n_k)}_j$ for some finite set $\mathcal{F} \subset \mathcal{N}$ then by letting
\begin{align*}
S_n y=& \sum_{(n_0,n_1,\dots,n_k)\in \mathcal{F}}\left(\sum_{j=1}^{n_k-n} \frac{y_{(n_0,n_1,\dots,n_k),j}}{4^n}e^{(n_0,n_1,\dots,n_k)}_{j+n}\right. \\
&\quad\left. +\sum_{j=\max\{1,n_k-n+1\}}^{n_k} \frac{2^{N^{(n_0,n_1,\dots,n_k,n)}}}{4^{n-1}}y_{(n_0,n_1,\dots,n_k),j}e^{(n_0,n_1,\dots,n_k,n)}_{j+n-n_k}\right),
\end{align*}
we can compute that for any $n$, $T^nS_n y=y$ and $S_n y$ tends to $0$ when $n$ tends to infinity since for any $(n_0,n_1,\dots,n_k)\in \mathcal{F}$, we have
\[\frac{2^{N^{(n_0,n_1,\dots,n_k,n)}}}{4^{n-1}}= \frac{2^{N^{(n_0,n_1,\dots,n_k,1)}+n-1}}{4^{n-1}}\to 0.\]
By considering $x+S_n y$, we now deduce that there exists $N_1$ such that for any $n \ge N_1$, $T^n U_1\cap U_2\ne \emptyset$.

Let's show that $T$ is not chaotic. We first remark that $T$ can be seen as an operator $B_{(T_k)}$ defined on $c_0((X_k)_{k},\mathbb{N})$ where $X_k$ is the $c_0$-space supported by the elements in $\text{Chi}^{k-1}(e^{(1)}_{1})$ and $T_k$ is the restriction of $T$ on $X_k$. It follows from Proposition~ \ref{chaos-uni} that if $T$ is chaotic then $T$ possesses a fixed point $x$ such that the coordinate $x^{(1)}_{1}$ has an absolute value bigger than $1$. Assuming that such a vector $x$ exists, we show that for any $(n_0,\dots,n_k)\in \mathcal{N}$, if $|x^{(n_0,\dots,n_k)}_{1}|\ge 1$ then there exists $n\ge 1$ such that $|x^{(n_0,\dots,n_k,n)}_{1}|\ge 1$. This will contradict the fact that $x \in c_0(V)$. If $x$ is a fixed point for $T$ then for any $(n_0,\dots,n_k)\in \mathcal{N}$, $x^{(n_0,\dots,n_k)}_{n_k}=4^{-n_k+1}x^{(n_0,\dots,n_k)}_{1}$ and since
\[x^{(n_0,\dots,n_k)}_{n_k}=\sum_{n=1}^{\infty}T(x^{(n_0,\dots,n_k,n)}_1e^{(n_0,\dots,n_k,n)}_1),\]
there exists $n\ge 1$ such that \[\|T(x^{(n_0,\dots,n_k,n)}_1e^{(n_0,\dots,n_k,n)}_1)\|\ge 2^{-n}4^{-n_k+1}|x^{(n_0,\dots,n_k)}_{1}|.\]
Therefore, if $|x^{(n_0,\dots,n_k)}_{1}|\ge 1$, we conclude that there exists $n\ge 1$ such that
\begin{align*}
|x^{(n_0,\dots,n_k,n)}_{1}|&\ge 2^{N^{(n_0,\dots,n_k,n)}}2^{-n}4^{-n_k+1}|x^{(n_0,\dots,n_k)}_{1}|\\
&\ge 2^{N^{(n_0,\dots,n_k,1)}-2n_k}\ge 1
\end{align*}
by our assumption on $N^{(n_0,\dots,n_k,1)}$. 

The first part of our statement is thus proved and for the second part, it suffices to use Corollary~\ref{mix,uni,T} (or Corollary~\ref{bilmix}) and Proposition~\ref{comparison} (or Remark~\ref{comprem}) to conclude that $B_T$ is mixing but not chaotic on $c_0(c_0(V),J)$.
\end{proof}

\subsection{Frequent hypercyclicity}

For the usual weighted shifts, we can use the characterization obtained for chaos in terms of weights to show that all chaotic weighted shifts are frequently hypercyclic. This is done by using the following criterion \cite{BoGr}.

\begin{theorem}\label{unc-fr-S_n}
Let $X$ be a separable Banach space and $T\in L(X)$.
Suppose that there are a dense subset $X_0$ of $X$ and mappings $S_n:X_0\to X$ such that for all $x\in X_0$, the following assertions hold:
\begin{enumerate}
\item $\sum_{n=0}^k T^kS_{k-n}x$ converges unconditionally in $X$, uniformly in $k$,
\item $\sum_{n=0}^{\infty} T^kS_{k+n}x$ converges unconditionally in $X$, uniformly in $k$,
\item $\sum_{n=0}^{\infty} S_n x$ converges unconditionally in $X$,
\item $T^nS_n x\to x$,
\end{enumerate}
then $T$ is frequently hypercyclic.
\end{theorem}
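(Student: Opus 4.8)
The plan is to establish the Frequent Hypercyclicity Criterion (Theorem~\ref{unc-fr-S_n}) by constructing, for each non-empty open set, a frequently visited orbit. First I would fix a countable dense sequence $(y_j)_{j\ge 1}$ in $X_0$ and, for each $j$, use assertions (3) and (4) to control the tails $\sum_{n\ge M} S_n y_j$ and the approximation $T^n S_n y_j \to y_j$, choosing a threshold $M_j$ so that these quantities are smaller than $2^{-j}$. The standard strategy is then to select pairwise disjoint subsets $A_j \subset \mathbb{N}$ of positive lower density with large gaps (so that indices in distinct $A_j$, and distinct indices within the same $A_j$, are separated by more than $M_j$), and to define the candidate frequently hypercyclic vector by
\[
x = \sum_{j\ge 1} \sum_{m \in A_j} S_m y_j.
\]
The separation of the supports $A_j$ ensures the inner structure does not interfere across blocks.

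Next I would verify that $x$ is well-defined in $X$: this follows from assertion (3) applied to each $y_j$ together with the summability $\sum_j 2^{-j}$, using the unconditional convergence to rearrange freely. The heart of the argument is to show, for each $j$ and each $k \in A_j$, that $T^k x$ is close to $y_j$. Writing
\[
T^k x = T^k S_k y_j + \sum_{m \in A_j, m < k} T^k S_m y_j + \sum_{m \in A_j, m > k} T^k S_m y_j + \sum_{i \ne j} \sum_{m \in A_i} T^k S_m y_i,
\]
the first term is close to $y_j$ by (4), the second (finitely many backward terms) is controlled uniformly in $k$ by (1), the third (forward tail) by (2), and the cross terms over $i \ne j$ by the analogous tail estimates, all made small through the large-gap construction. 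Since $A_j$ has positive lower density, the set of $k$ with $T^k x \in B(y_j,\varepsilon)$ has positive lower density, giving frequent hypercyclicity.

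The main obstacle I expect is the bookkeeping that makes the four unconditional-convergence hypotheses combine correctly: specifically, arranging the gaps in the sets $A_j$ so that (1) bounds the backward sums uniformly in $k$, (2) bounds the forward sums uniformly in $k$, and the cross-block interference remains summable, all simultaneously. Unconditionality is essential here, since the index sets $A_j$ are not intervals and the series must converge regardless of the order of summation. Once the gap structure is fixed so that each individual block contributes less than a prescribed $\varepsilon/2^{j}$, the estimates assemble routinely, and the positive lower density of $A_j$ transfers directly to the visiting set $N_T(x, B(y_j,\varepsilon))$.
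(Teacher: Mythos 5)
The paper does not prove this theorem: it is quoted as a known result with a citation to Bonilla and Grosse-Erdmann \cite{BoGr}, so there is no internal proof to compare against. Your sketch correctly reproduces the standard argument from that reference (dense sequence $(y_j)$ repeated with increasing accuracy, disjoint positive-lower-density sets $A_j$ with separation exceeding the unconditionality thresholds, the vector $x=\sum_j\sum_{m\in A_j}S_m y_j$, and the four-term decomposition of $T^kx-y_j$ controlled by hypotheses (1)--(4)), and the bookkeeping you flag is exactly the content of the standard separation lemma, so the proposal is sound.
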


This link between chaos and frequent hypercyclicity for weighted shifts can be extended to the operators $(B_{(T_k)})$.

\begin{prop}\label{chaosfhc}
On $\ell^p((X_k)_k,J)$ with $1\le p<\infty$ or on $c_0((X_k)_k,J)$, if $B_{(T_k)}$ is chaotic then $B_{(T_k)}$ is frequently hypercyclic.
\end{prop}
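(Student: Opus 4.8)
The plan is to verify the Frequent Hypercyclicity Criterion of Theorem~\ref{unc-fr-S_n} for $B:=B_{(T_k)}$, feeding it the data produced by the chaos characterization of Proposition~\ref{chaos-uni}. First note that since $B$ is chaotic it is in particular hypercyclic, so the ambient space is separable and the criterion applies. As in the proof of Proposition~\ref{chaos-uni} we may assume each $\mathcal{D}_k$ is a dense linear subspace of $X_k$ and that the assignment sending $x_k\in\mathcal{D}_k$ to its forward sequence $(x_n)_{n>k}$ (satisfying $T_nx_n=x_{n-1}$) is linear. Writing $e_j(u)$ for the vector equal to $u\in X_j$ at coordinate $j$ and $0$ elsewhere, I take $X_0$ to be the (dense) set of finitely supported vectors whose $k$-th coordinate lies in $\mathcal{D}_k$, and I define $S_n$ on monomials by $S_ne_j(x_j)=e_{j+n}(x_{j+n})$, where $x_{j+n}$ is the $(j+n)$-th term of the forward sequence attached to $x_j$; I then extend $S_n$ linearly to $X_0$.

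The computation that drives everything is the identity $(B^m z)_i=T_{i,i+m}z_{i+m}$ combined with the coordinates of the chaos vector
\[ y^{(j)}:=(\dots,T_{j-1,j}x_j,\,x_j,\,x_{j+1},\dots)\in X, \]
namely $y^{(j)}_i=T_{i,j}x_j$ for $i\le j$ and $y^{(j)}_i=x_i$ for $i\ge j$. Applying this to a monomial gives $B^nS_ne_j(x_j)=e_j(x_j)$, so condition $(4)$ holds \emph{exactly}; moreover both $\sum_{n\ge 0}S_ne_j(x_j)$ and $\sum_{n\ge 0}B^kS_{k+n}e_j(x_j)$ equal the forward part $\sum_{n\ge0}e_{j+n}(y^{(j)}_{j+n})$ of $y^{(j)}$, independently of $k$. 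Since $y^{(j)}\in\ell^p$ (resp.\ $c_0$) and these terms have pairwise disjoint supports, the series converges unconditionally, which yields conditions $(3)$ and $(2)$, the latter uniformly in $k$ because the terms do not depend on $k$.

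For condition $(1)$ the same identity gives, in the bilateral case, $B^kS_{k-n}e_j(x_j)=e_{j-n}(y^{(j)}_{j-n})$, again independent of $k$, so $\sum_{n=0}^k B^kS_{k-n}e_j(x_j)$ is a partial sum of the backward part $\sum_{n\ge 0}e_{j-n}(y^{(j)}_{j-n})$ of $y^{(j)}$; its unconditional convergence, uniformly in $k$, follows once more from $y^{(j)}\in X$ and the disjointness of supports. In the unilateral case $J=\mathbb{N}$ this sum is even finite, since the coordinate $j-n$ falls past the root once $n\ge j$, so $(1)$ is immediate. Finally, because $B$ and every $S_n$ are linear and the four conditions pass to finite sums, it suffices to have checked them on monomials, whence they hold for all $x\in X_0$, and Theorem~\ref{unc-fr-S_n} gives that $B$ is frequently hypercyclic. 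I expect the only genuinely delicate point to be the bookkeeping that matches $B^kS_{k\pm n}e_j(x_j)$ with the coordinates of $y^{(j)}$; once this is in place, every convergence requirement reduces to the single fact that $y^{(j)}\in\ell^p$ (resp.\ $c_0$).
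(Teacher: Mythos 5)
Your proof is correct and follows essentially the same route as the paper: both verify Theorem~\ref{unc-fr-S_n} by taking $X_0=c_{00}((\mathcal{D}_k)_k,J)$ with the $\mathcal{D}_k$ from Proposition~\ref{chaos-uni}, defining $S_n$ on monomials via the forward sequences and extending coordinatewise, and identifying the three series with the forward and backward parts of the chaos vector $y^{(j)}$, whose membership in $\ell^p$ (resp.\ $c_0$) gives all the unconditional convergences uniformly in $k$. (Your extra assumption that the forward-sequence assignment is linear on each $\mathcal{D}_k$ is not needed, since elements of $X_0$ decompose uniquely into monomials supported on distinct coordinates.)
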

\begin{proof} 
We perform the proof for $J=\mathbb{Z}$. Let $Y=\ell^p((X_k)_k,J)$ with $1\le p<\infty$ or $Y=c_0((X_k)_k,J)$.
We show that if $B_{(T_k)}$ is chaotic then we can apply Theorem \ref{unc-fr-S_n}. By Proposition \ref{chaos-uni} and its proof, for each $k\in \mathbb{Z}$, there exists a dense subset $\mathcal{D}_k$ of $X_k$ satisfying that for every $x_k\in \mathcal{D}_k$, there exists $(x_n)_{n\ge k+1}$ such that $T_{n}x_n=x_{n-1}$ for any $n\ge k+1$ and such that 
\[(\cdots,T_{k-2,k}x_k,T_{k-1,k}x_k,x_k,x_{k+1},\cdots)\in Y.\] 
Let $X_0:=c_{00}((\mathcal{D}_k)_k,\mathbb{Z})$. For each $n\geq 0$, for each $x_i\in \mathcal{D}_i$, we set
$$
S_n((\dots ,0,0,\underbrace{x_i}_{i-\text{position}},0,0\dots ))=(\dots ,0,0,\underbrace{x_{i+n}}_{(i+n)-\text{position}},0,0,\dots)
$$
where $x_{i+n}$ is chosen as above. We then extend $S_n$ in a natural way to the finitely supported sequences in $X_0$. It is immediate that $B_{(T_k)}^nS_nx=x$ for every $x\in X_0$. Moreover, for each $x_i\in \mathcal{D}_i$, we have
$$
\sum_{n=1}^{\infty}S_n((\dots ,0,0,\underbrace{x_i}_{i-\text{position}},0,0\dots ))=(\dots ,0,0,0,x_{i+1},x_{i+2},\dots )\in Y,
$$
and we deduce that $\sum_{n=0}^{\infty}S_nx$ converges unconditionally in $Y$ for all $x\in X_0$.
Finally, for each $x_i\in \mathcal{D}_i$ and $l\in \mathbb{N}$, we have
$$
\sum_{n=0}^{l}B^l_{(T_k)}S_{l-n}((\dots ,0,0,\underbrace{x_i}_{i-\text{position}},0,0\dots ))=(\cdots,0,T_{i-l,i}x_i,\dots ,T_{i-1,i}x_i,x_i,0,\cdots)
$$
and
$$
\sum_{n=0}^{\infty}B^l_{(T_k)}S_{l+n}((\dots ,0,0,\underbrace{x_i}_{i-\text{position}},0,0\dots ))=(\dots ,0,x_i,x_{i+1},x_{i+2},\dots ).
$$
From the fact that $(\cdots,T_{k-2,k}x_k,T_{k-1,k}x_k,x_k,x_{k+1},\cdots)\in Y$, we may conclude that the series above converge unconditionally in $Y$, uniformly in $l$ and hence that $B_{(T_k)}$ is frequently hypercyclic. 
\end{proof}

Bayart and Ruzsa showed in \cite{BaRu15} the impressive result that on $\ell^p(\mathbb{K},J)$, a weighted shift $B_w$ is chaotic if and only if $B_w$ is frequently hypercyclic. We can wonder if this equivalence is still true in the context of the operators $B_{(T_k)}$ on $\ell^p((X_k)_k,J)$ with $1\le p<\infty$. The approach of Bayart and Ruzsa allows us to get the following result.

\begin{theorem}\label{fhcrusza}
If $B_{(T_k)}$ is frequently hypercyclic on $\ell_p((X_k)_k,J)$ with $1\le p<\infty$ then for any $k\in J$, any non-empty open set $U_k\subset X_k$,
\[\sum_{n\ge 1}\inf\{\|z\|_{k+n}^p:z\in T^{-1}_{k,k+n}U_k\}<\infty \]
and if $J=\mathbb{Z}$, we also have
\[\sum_{n\ge 1}\inf\{\|z\|_{k-n}^p:z\in T_{k-n,k}U_k\}<\infty.\]
\end{theorem}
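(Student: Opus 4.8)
The plan is to read both series off a single frequently hypercyclic vector, exploiting that such a vector visits \emph{every} nonempty open set with positive lower density and that distinct coordinates of one orbit point are ``orthogonal'' for the $\ell^p$-norm. First I would fix $k\in J$, a nonempty open $U_k\subseteq X_k$, and a constant $M>0$ large enough that $W:=\{y\in Y:\ \|y\|<M,\ y_k\in U_k\}$ is a nonempty open subset of $Y:=\ell^p((X_k)_k,J)$ (take $u\in U_k$ with $\|u\|_k<M$ and the vector supported at coordinate $k$ by $u$). Letting $x$ be a frequently hypercyclic vector, set $A:=\{r\ge 0:\ B_{(T_k)}^r x\in W\}$; this set has positive lower density, and $r\in A$ forces both $\|B_{(T_k)}^r x\|<M$ and $(B_{(T_k)}^r x)_k\in U_k$.

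The core is a packing identity. For $r\in A$ and $s\ge 1$ with $r+s\in A$ one computes $T_{k,k+s}(B_{(T_k)}^r x)_{k+s}=(B_{(T_k)}^{r+s}x)_k\in U_k$, so the coordinate $(B_{(T_k)}^r x)_{k+s}$ belongs to $T_{k,k+s}^{-1}U_k$ and therefore $\|(B_{(T_k)}^r x)_{k+s}\|_{k+s}^p\ge a_s$, where $a_s:=\inf\{\|z\|_{k+s}^p:z\in T_{k,k+s}^{-1}U_k\}$. Since distinct values of $s$ index distinct coordinates $k+s$, summing these disjoint contributions inside the single vector $B_{(T_k)}^r x$ yields, for every $r\in A$,
\[\sum_{s\ge 1,\ r+s\in A}a_s\ \le\ \|B_{(T_k)}^r x\|^p\ <\ M^p.\]
In the bilateral case the backward series is treated symmetrically: from $(B_{(T_k)}^r x)_{k-n}=T_{k-n,k}(B_{(T_k)}^{r-n}x)_k$ together with $r-n\in A$ one gets $\|(B_{(T_k)}^r x)_{k-n}\|_{k-n}^p\ge b_n:=\inf\{\|z\|_{k-n}^p:z\in T_{k-n,k}U_k\}$, whence $\sum_{n\ge 1,\ r-n\in A}b_n\le M^p$ for every $r\in A$. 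Note that these two steps are purely geometric and do not involve the operator norms $\|T_{j,j+t}\|$; the recurrence of $x$ to the bounded set $W$ supplies the uniform bound $M^p$.

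What remains is to upgrade these \emph{shift-restricted} estimates to the genuine sums $\sum_{s\ge 1}a_s<\infty$ and $\sum_{n\ge 1}b_n<\infty$, and this is precisely where the method of Bayart and Ruzsa is needed. Averaging the displayed inequality over $r\in A\cap[0,N)$ and interchanging summations gives $\sum_{s\ge 1}a_s\,|A\cap(A-s)\cap[0,N)|\le M^p N$, so one is reduced to a statement about the set $A$ alone: a set of positive lower density with $\sup_{r\in A}\sum_{s\in(A-r)\cap\mathbb{N}}a_s<\infty$ must have $\sum_s a_s<\infty$. I expect this final combinatorial passage to be the main obstacle, far more delicate than the routine packing and recurrence above: the self-correlations $|A\cap(A-s)\cap[0,N)|$ need not be uniformly large for the individual indices $s$ carrying most of the mass of $(a_s)$, and controlling the full sum from its restriction to the difference set $A-A$ is exactly the content of the quantitative difference-set results of Bayart–Ruzsa, which I would import to conclude.
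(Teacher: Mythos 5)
Your packing argument is correct and is essentially the paper's: the paper takes for your $W$ a small ball around a vector supported at coordinate $k$ and derives, for $n$ in the return set $A$, the bound $\sum_{m\in A}\alpha_{m-n}\le \varepsilon^p+\alpha_0$, where $\alpha_s$ is your $a_s$ for $s>0$ and your $b_{-s}$ for $s<0$; your choice $W=\{y:\|y\|<M,\ y_k\in U_k\}$ yields the same inequality with $M^p$ in place of $\varepsilon^p+\alpha_0$. The concluding appeal to the difference-set results of Bayart and Ruzsa is also exactly where the paper goes (Corollary 9 of \cite{BaRu15}).

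There is, however, a genuine gap in that final step, and it is not just a deferred citation: the combinatorial statement you propose to import --- that a set $A$ of positive lower density with $\sup_{r\in A}\sum_{s\in(A-r)\cap\mathbb{N}}a_s<\infty$ forces $\sum_s a_s<\infty$ --- is false for arbitrary nonnegative $(a_s)$. Take $A=\{0,2,4,\dots\}$ and $a_s=1$ for $s$ odd, $a_s=0$ for $s$ even: every sum over $(A-r)\cap\mathbb{N}$ with $r\in A$ vanishes, yet $\sum_s a_s=\infty$. The Bayart--Ruzsa lemma applies only to sequences satisfying a regularity condition of the form $a_s\ge \delta\, a_{s-1}$, and the paper devotes a block of its proof to verifying precisely this: if $T_{k,k+s}z\in U_k$ then $w:=T_{k+s-1,k+s}z$ satisfies $T_{k,k+s-1}w\in U_k$ and $\|w\|\le\|B_{(T_k)}\|\,\|z\|$, whence $a_s\ge \|B_{(T_k)}\|^{-p}a_{s-1}$, with the analogous estimate $b_n\ge\|B_{(T_k)}\|^{-p}b_{n-1}$ on the backward side. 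Your remark that the argument ``does not involve the operator norms'' is exactly where it goes astray: the operator norm is what supplies the regularity without which the difference-set machinery cannot be invoked. Once this estimate is added, your argument coincides with the paper's.
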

\begin{proof}
We again perform the proof for $J=\mathbb{Z}$. Assume that $B_{(T_k)}$ is frequently hypercyclic and that $x\in \ell_p((X_k)_k,\mathbb{Z})$ is a frequently hypercyclic vector for $B_{(T_k)}$. Let $k\in \mathbb{Z}$ and $U_k\subset X_k$ be a non-empty open set. Let $y_k\in U_k\backslash\{0\}$ and $\varepsilon>0$ be such that $\|y_k\|_k\ge 2\varepsilon$ and $B_k(y_k,\varepsilon)\subset U_k$.
Let
\[A=\{n\ge 0:\|B^n_{(T_k)}x-(\dots ,0,0,\underbrace{y_k}_{k-\text{position}},0,0\dots )\|_p< \varepsilon\}.\]
It follows that $\underline{\text{dens}}(A)>0$. If $n\in A$, we have
\begin{align*}
\varepsilon^p&\ge \sum_{m<n}\|T_{k+m-n,k+m}x_{k+m}\|_{k+m-n}^p+\sum_{m>n}\|T_{k+m-n,k+m}x_{k+m}\|_{k+m-n}^p\\
&\ge \sum_{m<n, m\in A}\|T_{k+m-n,k+m}x_{k+m}\|_{k+m-n}^p+\sum_{m>n, m\in A}\|T_{k+m-n,k+m}x_{k+m}\|_{k+m-n}^p
\end{align*}
while 
\begin{align*}
\sum_{m<n, m\in A}\|T_{k+m-n,k+m}x_{k+m}\|_{k+m-n}^p&=\sum_{m<n, m\in A}\|T_{k+m-n,k}T_{k,k+m}x_{k+m}\|_{k+m-n}^p\\
&\ge \sum_{m<n, m\in A}\inf_{z\in B_k(y_k,\varepsilon)}\|T_{k+m-n,k}z\|_{k+m-n}^p
\end{align*}
and
\[\sum_{m>n, m\in A}\|T_{k+m-n,k+m}x_{k+m}\|_{k+m-n}^p\ge \sum_{m>n, m\in A}\inf\{\|z\|_{k+m-n}^p:T_{k,k+m-n}z\in B_k(y_k,\varepsilon)\}.\]
Let $\alpha_n=\inf\{\|z\|_{k+n}^p:T_{k,k+n}z\in B_k(y_k,\varepsilon)\}$ if $n\ge 0$ and $\alpha_n=\inf_{z\in B_k(y_k,\varepsilon)}\|T_{k+n,k}z\|_{k+n}^p$ if $n\le 0$. We remark that $\alpha_0=\inf_{z\in B_k(y_k,\varepsilon)}\|z\|_k^p$ in both cases and that for every $n\geq 1$, we have
\[\{z\in X_{k+n-1}:T_{k,k+n-1}z\in B_k(y_k,\varepsilon)\}\supset \{T_{k+n-1,k+n}z\in X_{k+n-1}:T_{k,k+n}z\in B_k(y_k,\varepsilon)\}\]
and thus
\begin{align*}
\alpha_n&=\inf\{\|z\|_{k+n}^p:T_{k,k+n}z\in B_k(y_k,\varepsilon)\}\\
&\ge \|T_{k+n-1,k+n}\|^{-p}\inf\{\|T_{k+n-1,k+n}z\|_{k+n}^p:T_{k,k+n}z\in B_k(y_k,\varepsilon)\}\\
&\ge \|B_{(T_k)}\|^{-p}\inf\{\|z\|_{k+n-1}^p:T_{k,k+n-1}z\in B_k(y_k,\varepsilon)\}= \|B_{(T_k)}\|^{-p}\alpha_{n-1}.
\end{align*}
Moreover, for every $n\le 0$, we have
\begin{align*}
\alpha_n&=\inf_{z\in B_k(y_k,\varepsilon)}\|T_{k+n,k}z\|_{k+n}^p\\
& \ge \|T_{k+n-1,k+n}\|^{-p}\inf_{z\in B_k(y_k,\varepsilon)}\|T_{k+n-1,k}z\|_{k+n-1}^p\\
& \ge \|B_{(T_k)}\|^{-p}\alpha_{n-1}.
\end{align*}
Let $\beta_n=\sum_{m\in A}\alpha_{m-n}$. We get that
\begin{align*}
\beta_n&=\sum_{m<n, m\in A}\inf_{z\in B_k(y_k,\varepsilon)}\|T_{k+m-n,k}z\|_{k+m-n}^p+\alpha_0\\
&\quad\quad+\sum_{m>n, m\in A}\inf\{\|z\|_{k+m-n}^p:T_{k,k+m-n}z\in B_k(y_k,\varepsilon)\}\le \varepsilon^p+\alpha_0
\end{align*}
In other words, the sequence $(\beta_n)$ is bounded and it follows from Corollary 9 in \cite{BaRu15} that $\sum_{n\in \mathbb{Z}}\alpha_n$ is convergent, i.e.
\[\sum_{n>0}\inf\{\|z\|_{k+n}^p:T_{k,k+n}z\in B_k(y_k,\varepsilon)\}<\infty \quad\text{and}\quad \sum_{n>0}\inf_{z\in B_k(y_k,\varepsilon)}\|T_{k-n,k}z\|^p<\infty.\]
Finally, since $B_k(y_k,\varepsilon)\subset U_k$, we get the desired result.
\end{proof}

This result allows us to deduce that the frequent hypercyclicity of $B_{(T_k)}$ on $\ell_p((X_k)_k,J)$ implies that $B_{(T_k)}$ is mixing.

\begin{cor}\label{fhcmix}
If $B_{(T_k)}$ is frequently hypercyclic on $\ell_p((X_k)_k,J)$ with $1\le p <\infty$ then $B_{(T_k)}$ is mixing on $\ell_p((X_k)_k,J)$.
\end{cor}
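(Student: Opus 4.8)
The plan is to deduce Corollary~\ref{fhcmix} directly from the quantitative summability estimates of Theorem~\ref{fhcrusza} together with the mixing characterization in Proposition~\ref{mixing:uni} (unilateral case) and Proposition~\ref{prop mixing} (bilateral case). The key observation is that a convergent series has terms tending to zero, and this vanishing is exactly what the mixing condition demands. So first I would fix $j\in J$ and a non-empty open set $U_j\subset X_j$. By Theorem~\ref{fhcrusza}, frequent hypercyclicity gives
\[\sum_{n\ge 1}\inf\{\|z\|_{j+n}^p:z\in T^{-1}_{j,j+n}U_j\}<\infty,\]
so in particular $\inf\{\|z\|_{j+n}^p:z\in T^{-1}_{j,j+n}U_j\}\to 0$ as $n\to\infty$.

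Next I would translate this vanishing into the mixing condition. Fix any $\varepsilon>0$; since the infimum above tends to $0$, there is an $n_0$ such that for every $n\ge n_0$ there exists $z\in X_{j+n}$ with $T_{j,j+n}z\in U_j$ and $\|z\|_{j+n}<1$ (taking $\varepsilon=1$ suffices). This says precisely that $T_{j,j+n}B_{j+n}(0,1)\cap U_j\ne\emptyset$ for all $n\ge n_0$, which is exactly condition (2) of Proposition~\ref{mixing:uni}. Hence in the unilateral case $B_{(T_k)}$ is mixing. For the bilateral case I would argue symmetrically: the second summability conclusion of Theorem~\ref{fhcrusza},
\[\sum_{n\ge 1}\inf\{\|z\|_{k-n}^p:z\in T_{k-n,k}U_k\}<\infty,\]
forces $\inf\{\|z\|_{j-n}^p:z\in T_{j-n,j}U_j\}\to 0$, giving $T_{j-n,j}U_j\cap B_{j-n}(0,1)\ne\emptyset$ for all large $n$; combined with the first estimate this yields both clauses of condition (2) in Proposition~\ref{prop mixing}, so $B_{(T_k)}$ is mixing there too.

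The only mild subtlety to get right is the bookkeeping between the two equivalent formulations of the infimum. Theorem~\ref{fhcrusza} is stated with the preimage notation $z\in T^{-1}_{j,j+n}U_j$, meaning $T_{j,j+n}z\in U_j$, and I should make sure that the witness $z$ realizing (approximately) the infimum simultaneously satisfies $\|z\|<1$; since the infimum of $\|z\|^p$ over this set tends to $0$, for large $n$ one can choose such $z$ with $\|z\|^p<1$, which is all that is needed. There is no real obstacle here: the entire content was already packed into Theorem~\ref{fhcrusza}, and this corollary is simply the remark that ``summable implies the terms vanish'' converts the frequent-hypercyclicity estimate into the cofiniteness phrasing of mixing. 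I expect the writeup to be short, essentially two sentences per case.
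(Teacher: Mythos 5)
Your proposal is correct and follows essentially the same route as the paper: the authors likewise combine Theorem~\ref{fhcrusza} with Proposition~\ref{prop mixing} (or Proposition~\ref{mixing:uni}), observing that convergence of the series forces the infima to be eventually less than $1$, which is equivalent to the intersection conditions in the mixing characterization. Your attention to the strict inequality needed to extract a witness in the open unit ball matches the paper's explicit statement of the equivalence with the bound $<1$.
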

\begin{proof}
The result follows from Theorem~\ref{fhcrusza} and Proposition~\ref{prop mixing} (or Proposition~\ref{mixing:uni}). It suffices to remark that for every $n\ge 1$ and every $U_j\subset X_j$ non-empty open set,
\[T_{j,j+n}B_{j+n}(0,1)\cap U_j\ne \emptyset \quad \Leftrightarrow  \quad \inf\{\|z\|_{j+n}^p:z\in T^{-1}_{j,j+n}U_j\}<1\]
and that 
\[T_{j-n,j}U_j\cap B_{j-n}(0,1)\ne \emptyset \quad \Leftrightarrow  \quad 
\inf\{\|z\|_{j-n}^p:z\in T_{j-n,j}U_j\}<1.\]
\end{proof}

Notice that this implication is not true in general on $c_0((X_k)_k,J)$ since even on $c_0(\mathbb{K},\mathbb{N})$ we can find a frequently hypercyclic weighted shift that is not mixing (\cite{BaGr07}). If we want to extend the result of Bayart and Ruzsa for weighted shifts on $\ell^p$, it will be nice if the convergences obtained in Theorem~\ref{fhcrusza} imply chaos. However, this will not be the case because these conditions are too weak. In fact, these conditions are not even equivalent to frequent hypercyclicity in the case of $B_T$.

\begin{prop}\label{proptree}
Let $1\le p<\infty$. There exist a rooted directed tree $V$ and a weighted shift $T$ on $X=\ell^p(V)$ (resp. on $X=c_0(V)$) such that for any non-empty open set $U\subset X$,
\[\sum_{n\ge 1}\inf\{\|z\|_{X}:z\in T^{-n}U\}<\infty \quad\text{and}\quad \sum_{n\ge 1}\inf\{\|z\|_{X}:z\in T^n U\}<\infty.\]
but $B_T$ is not frequently hypercyclic on $\ell^p(X,J)$ (resp. on $c_0(X,J)$).
\end{prop}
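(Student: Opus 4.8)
The plan is to reduce the assertion about $B_T$ to the corresponding assertion about $T$ itself and then to build $T$ by hand. First I would record the reduction. A weighted shift $T$ on $X=\ell^p(V)$ (resp. $c_0(V)$) over a rooted tree $V$ is itself of the form $B_{(T_k)}$, where $X_k$ is the subspace supported on the $k$-th generation of $V$ and $T_k$ is the restriction of $T$; hence Proposition~\ref{comparison} (and, for $J=\mathbb{Z}$, Remark~\ref{comprem}) shows that $T$ is quasi-conjugate to $B_T$ on $\ell^p(X,J)$ (resp. $c_0(X,J)$). Since frequent hypercyclicity passes from a system to any of its factors, if $B_T$ were frequently hypercyclic then $T$ would be frequently hypercyclic on $X$. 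Consequently it suffices to exhibit a rooted directed tree $V$ and a weighted shift $T$ on $X$ for which the two displayed series converge for every non-empty open $U$, yet $T$ is \emph{not} frequently hypercyclic; this automatically settles both $J=\mathbb{N}$ and $J=\mathbb{Z}$. Note that the two series are precisely the necessary conditions of Theorem~\ref{fhcrusza}, so the failure of frequent hypercyclicity cannot be extracted from that theorem and must be established directly.

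For the construction I would take a tree in the spirit of Proposition~\ref{mixc0} --- infinitely many finite branches of every length hanging from every vertex, with weights equal to $4$ along the interior of each branch and to a small factor $2^{-N}$ at each passage from the start of a branch to the end of its parent branch --- possibly re-tuning the exponents $N$ to drive the density estimate below. Checking the convergence conditions is the routine part. The image series $\sum_n\inf\{\|z\|:z\in T^nU\}$ is finite for free: the tree is rooted, so $T^n$ annihilates every finitely supported vector once $n$ exceeds its height, whence $0\in T^nU$ for all large $n$ as soon as $U$ meets $c_{00}(V)$, leaving only finitely many nonzero terms. For the preimage series $\sum_n\inf\{\|z\|:z\in T^{-n}U\}$ I would reduce, by linearity, density and scaling, to balls $U=B(e_v,\varepsilon)$ about basis vectors; to reach distance $n$ below $v$ one descends $v$'s own branch and then enters a single sufficiently long sub-branch, so that the cumulative weight is of order $2^{n}$ up to a constant depending only on $v$ (the $\times 4$ gains beat the single $2^{-N}$ loss). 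This yields a preimage of $e_v$ of norm at most $C_v\,2^{-n}$, and the resulting geometric series converges.

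The genuinely hard part is to prove that $T$ is not frequently hypercyclic, and this is where the hypothesis of infinitely many branches is used in the opposite direction. Suppose $x$ were a frequently hypercyclic vector, let $e$ be the basis vector at the root, and set $A=\{n:\ \|T^nx-e\|<\varepsilon\}$, so that $\underline{\text{dens}}(A)>0$. The mechanism I would exploit is a tension between budget and interference: keeping $\|x\|$ finite forces each root-visit to be produced by a highly amplified, essentially concentrated ``wave'' climbing one branch, but such a wave passes one step before arrival through a coordinate of enormous size, so two visits realized by concentrated waves cannot occur within a bounded number of steps of one another without one spoiling the other; the interference can be avoided only by spreading a wave over the many branches of a single generation, which lowers its $\ell^p$ (resp. $c_0$) footprint at intermediate times but forces its source mass onto lower-amplification vertices and thereby inflates $\|x\|$. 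The plan is to make this dichotomy quantitative: estimate, over a block of times, the mass that $x$ must place at each distance from the root in order that a positive-density subset of the block produce root-visits, and show that no distribution of this mass keeps $\|x\|$ finite while holding the interference below $\varepsilon$. I expect this combined density/budget estimate --- tracking how mass at a fixed distance from the root contributes at once to the root amplitude and to the norm across a wide generation --- to be the main obstacle; everything else is bookkeeping. The $c_0$ case runs verbatim, with the $\ell^p$ budget replaced by the requirement that coordinates tend to $0$.
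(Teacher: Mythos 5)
Your reduction is sound and is in fact available in the paper's own toolkit: since a weighted shift on a rooted tree is itself of the form $B_{(T_k)}$ with $X_k$ the span of the $k$-th generation, Proposition~\ref{comparison} together with Remark~\ref{comprem} makes $T$ a factor of $B_T$, so non-frequent-hypercyclicity of $T$ transfers to $B_T$. Your verification of the two convergence conditions is also essentially the paper's: the image series is finite because the tree is rooted, and the preimage series is controlled by routing the $n$-th preimage through a different sub-branch for each $n$. The problem is that the part you yourself identify as ``the genuinely hard part'' is not actually proved, and the sketch you give of it points in the wrong direction in two respects.

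First, the construction. You propose weights equal to $4$ along the interior of every branch, with only the connection weights $2^{-N}$ left to tune. That is the configuration of Proposition~\ref{mixc0}, and with constant interior amplification the obstruction you need does not appear; indeed the frequently hypercyclic example of Proposition~\ref{fhcnotchaos} has exactly this all-$4$ interior structure. The essential feature of the correct construction is a \emph{two-phase} weight profile on each branch of length $l$: a long prefix of $a$ edges with weight $1$ followed by a short amplifying suffix of weight-$4$ edges, with the parameters chosen so that $a/l\to 1$ as one moves deep into the tree. Second, the mechanism. Your ``interference between concentrated waves'' heuristic aims to show that two target-visits cannot occur too close together, but positive lower density does not force visits to be close, so no contradiction is in sight along that route. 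The argument that actually works is different: because of the weight-$1$ prefix, any coordinate $x_j$ on a branch capable of producing a visit to a neighbourhood of $2e^{(1)}_1$ at time $n$ must satisfy $j>a$ and $|x_j|\ge 4^{-(j-a)}$, which forces $\|T^m x\|\ge 1$ for \emph{all} $m$ with $j-a\le m<j$, i.e.\ for a proportion at least $a/l$ of the times up to $n$. Letting the visits go to infinity (so that $a/l\to 1$) shows that $N_T(x,B(0,1))$ has lower density $0$, so $x$ cannot be frequently hypercyclic. Without the two-phase weight structure and this density-of-large-norm-times argument, the proof does not close.
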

\begin{proof}
Let $\mathcal{N}=\{(n_0,n_1,\dots,n_k): n_0=1,\ k\ge 0\ \text{and $n_j\in \mathbb{N}$ for any $1\le j\le k$}\}$ and a family  $(l^{(1,n_1,\dots,n_k)})_{(1,n_1,\dots,n_k)\in\mathcal{N}}\subset  \mathbb{N}$. We consider the rooted directed tree $V$ where $e^{(1)}_1$ is the root and where for each $(1,n_1,\dots,n_k)\in\mathcal{N}$, 
\[\text{Chi}(e^{(1,n_1,\dots,n_k)}_{l^{(1,n_1,\dots,n_k)}})=\{e^{(1,n_1,\dots,n_k,n_{k+1})}_{1}: n_{k+1} \ge 1\}\]
and for all $1\le j< l^{(1,n_1,\dots,n_k)}$, \[\text{Chi}(e^{(1,n_1,\dots,n_k)}_{j})=\{e^{(1,n_1,\dots,n_k)}_{j+1}\}.\]
This tree is similar to the tree considered in the proof of Proposition~\ref{mixc0} excepted that the length of the segment $(e^{(1,n_1,\dots,n_k)}_{j})_{1\le j\le l^{(1,n_1,\dots,n_k)}}$ is now given by the parameter $l^{(1,n_1,\dots,n_k)}$.\\

Let $X=\ell^p(V)$ (resp. $c_0(V)$), $a^{(1)}=1$ and  $l^{(1)}=1$. Given $(1,n_1,\dots,n_k)\in\mathcal{N}$ with $k\ge 1$, we consider 
\[a^{(1,n_1,\dots,n_k)}=(n_1+\dots+n_k+2(l^{(1,n_1,\dots,n_{k-1})}-a^{(1,n_1,\dots,n_{k-1})})+3)^2,\]
\[N^{(1,n_1,\dots,n_k)}= n_k+2(l^{(1,n_1,\dots,n_{k-1})}-a^{(1,n_1,\dots,n_{k-1})})+1\]
and finally
\[l^{(1,n_1,\dots,n_k)}=a^{(1,n_1,\dots,n_k+1)}+N^{(1,n_1,\dots,n_k+1)}+n_k+1\]
so that for any $(1,n_1,\dots,n_k)\in\mathcal{N}$ 
\begin{multline}
\bigcup_{n\ge 1} [a^{(1,n_1\dots,n_k,n)}+N^{(1,n_1,\dots,n_k,n)}+n,l^{(1,n_1,\dots,n_k,n)}]\\
\quad=[a^{(1,n_1\dots,n_k,1)}+N^{(1,n_1,\dots,n_k,1)}+1,\infty[.
\label{cofinite}
\end{multline}

We can now consider the weighted shift $T$ on $X$ given by
\begin{itemize}
\item $Te^{(1)}_{1}=0$
\item $Te^{(n_0,\dots,n_k)}_{1}=2^{-N^{(n_0,\dots,n_k)}}e^{(n_0,\dots,n_{k-1})}_{l^{(n_0,\dots,n_{k-1})}}$ where $n_0=1$ and $k\ge 1$,
\item $Te^{(1,n_1,\dots,n_k)}_j=e^{(1,n_1,\dots,n_k)}_{j-1}$ if $1< j\le a^{(1,n_1,\dots,n_{k})}$ and $k\ge 1$,
\item $Te^{(1,n_1,\dots,n_k)}_j=4e^{(1,n_1,\dots,n_k)}_{j-1}$ if $a^{(1,n_1,\dots,n_{k})}< j\le l^{(1,n_1,\dots,n_{k})}$ and $k\ge 1$.
\end{itemize}
We already remark that for every non-empty open set $U\subset X$, $\inf\{\|z\|_{X}:z\in T^n U\}$ is ultimately equal to $0$ since finitely supported sequences are dense in $X$ and have an orbit eventually equal to $0$. We now show that for every non-empty open set $U\subset X$,
\[\sum_{m\ge 1}\inf\{\|z\|_{X}:z\in T^{-m}U\}<\infty.\]
As in the proof of Proposition~\ref{mixc0}, the idea will be to select a convenient branch depending on the considered iterate. Let $U$ be a non-empty open set in $X$. Since the finitely supported sequences are dense, we can find a finite set $\mathcal{F}\subset \mathcal{N}$ and
\[x=\sum_{(n_0,n_1,\dots,n_k)\in \mathcal{F}}\sum_{j=1}^{l^{(n_0,n_1,\dots,n_{k})}} x^{(n_0,n_1,\dots,n_k)}_je^{(n_0,n_1,\dots,n_k)}_j \in U\]
so that for every $m\ge 1$,
\begin{align*}
&\inf\{\|z\|_{X}:z\in T^{-m}U\}\\
&\quad \le \sum_{(n_0,n_1,\dots,n_k)\in \mathcal{F}}\sum_{j=1}^{l^{(n_0,n_1,\dots,n_{k})}} |x^{(n_0,n_1,\dots,n_k)}_j| \inf\{\|z\|_{X}:z\in T^{-m}\{e^{(n_0,n_1,\dots,n_k)}_j\}\}.
\end{align*} Since $\mathcal{F}$ is finite, it is then enough to show that for any $(n_0,n_1,\dots,n_k)\in \mathcal{N}$ and any $1\le j\le l^{(n_0,n_1,\dots,n_{k})}$,
\[\sum_{m \ge 1}\inf\{\|z\|_{X}:z\in T^{-m}\{e^{(n_0,n_1,\dots,n_k)}_j\}\}<\infty.\]
Let $(n_0,n_1,\dots,n_k)\in \mathcal{N}$ and $1\le j\le l^{(n_0,n_1,\dots,n_{k})}$. Let $m\ge l^{(n_0,n_1,\dots,n_k)}-j+a^{(n_0,n_1,\dots,n_k,1)}+1+N^{(n_0,\dots,n_k,1)}$. By \eqref{cofinite}, there exists $n\ge 1$ such that
\[l^{(n_0,n_1,\dots,n_k)}-j+a^{(n_0,n_1,\dots,n_k,n)}+n+N^{(n_0,\dots,n_k,n)}\le m\le l^{(n_0,n_1,\dots,n_k)}-j+l^{(n_0,n_1,\dots,n_k,n)}.\]
We then let 
\[z=\frac{2^{N^{(n_0,\dots,n_k,n)}}}{4^{l^{(n_0,n_1,\dots,n_k)}-\max\{a^{(n_0,n_1,\dots,n_k)},j\}} 4^{j+m-l^{(n_0,n_1,\dots,n_k)}-a^{(n_0,n_1,\dots,n_k,n)}}}e^{(n_0,n_1,\dots,n_k,n)}_{j+m-l^{(n_0,n_1,\dots,n_k)}}\] so that
\[T^mz=e^{(n_0,n_1,\dots,n_k)}_j \quad \text{and}\quad \|z\|\le \frac{2^{N^{(n_0,\dots,n_k,n)}}}{4^{j+m-l^{(n_0,n_1,\dots,n_k)}-a^{(n_0,n_1,\dots,n_k,n)}}}.\]
We deduce that for any $n\ge 1$
\begin{align*}
&\sum_{m=l^{(n_0,n_1,\dots,n_k)}-j+a^{(n_0,n_1,\dots,n_k,n)}+n+N^{(n_0,\dots,n_k,n)}}^{l^{(n_0,n_1,\dots,n_k)}-j+l^{(n_0,n_1,\dots,n_k,n)}}
\inf\{\|z\|_{X}:z\in T^{-m}\{e^{(n_0,n_1,\dots,n_k)}_j\}\}\\
&\le \sum_{m=l^{(n_0,n_1,\dots,n_k)}-j+a^{(n_0,n_1,\dots,n_k,n)}+n+N^{(n_0,\dots,n_k,n)}}^{l^{(n_0,n_1,\dots,n_k)}-j+l^{(n_0,n_1,\dots,n_k,n)}} \frac{2^{N^{(n_0,\dots,n_k,n)}}}{4^{j+m-l^{(n_0,n_1,\dots,n_k)}-a^{(n_0,n_1,\dots,n_k,n)}}}\\
& \le \frac{2^{N^{(n_0,\dots,n_k,n)}}}{4^{n+N^{(n_0,\dots,n_k,n)}-1}}\le \frac{1}{4^{n-1}}. 
\end{align*}
By \eqref{cofinite}, it follows that \[\sum_{m \ge 1}\inf\{\|z\|_{X}:z\in T^{-m}\{e^{(n_0,n_1,\dots,n_k)}_j\}\}<\infty.\]

It remains to show that $B_T$ is not frequently hypercyclic on $\ell^p(X,J)$ (resp. on $c_0(X,J)$). The strategy consists in showing that if $x$ is hypercyclic for $B_T$ then the norm $\|B_T^{n}x\|$ is too often bigger than $1$ by relying on the fact that the ratio $\frac{a^{(1,n_1,\dots,n_k)}}{l^{(1,n_1,\dots,n_k)}}$ is tending to $1$ when $n_1+\cdots+n_k$ tends to infinity. Let $y=(y_j)_{j\in J}$ given by $y_1=2e^{(1)}_{1}$ and $y_j=0$ for any $j\in J\backslash\{1\}$. If $x=(x_j)_{j\in J}\in \ell_p(X,J)$ (resp. on $c_0(X,J)$) with $\|x\|< 1$ satisfies
\[\|B^n_T x-y\|<1\]
then we have $\|T^nx_{n+1}-2e^{(1)}_{1}\|<1$. It follows that there exists
$(1,n_1,\dots,n_k)\in \mathcal{N}$ such that \[\|T^n x^{(1,n_1,\cdots,n_k)}_{n+1,j}e^{(1,n_1,\cdots,n_k)}_{j}\|\ge 2^{-n_1-\cdots-n_k-k}\] and $n=l^{(1,n_1)}+\cdots+l^{(1,n_1,\cdots,n_{k-1})}+j$ with $1\le j\le l^{(1,n_1,\cdots,n_k)}$. Since for any $l\ge 1$
\[N^{(n_0,n_1,\dots,n_l)}= n_l+2(l^{(n_0,n_1,\dots,n_{l-1})}-a^{(n_0,n_1,\dots,n_{l-1})})+1,\]
we can compute that
\begin{align*}
\|T^n e^{(1,n_1,\cdots,n_k)}_{j}\|&=\frac{4^{(l^{(1,n_1)}-a^{(1,n_1)})+\cdots+(l^{(1,n_1,\cdots,n_{k-1})}-a^{(1,n_1,\cdots,n_{k-1})})+\max(j-a^{(1,n_1,\dots,n_k)},0)}}{2^{N^{(1,n_1)}+\cdots +N^{(1,n_1,\dots,n_k)}}}\\
&= \frac{4^{\max(j-a^{(1,n_1,\dots,n_k)},0)}}{2^{n_1+\cdots+n_k+k}}.
\end{align*}
Therefore, since $|x^{(1,n_1,\cdots,n_k)}_{n+1,j}|< 1$ and $\|T^n x^{(1,n_1,\cdots,n_k)}_{n+1,j}e^{(1,n_1,\cdots,n_k)}_{j}\|\ge 2^{-n_1-\cdots-n_k-k}$, we deduce that $j> a^{(1,n_1,\cdots,n_k)}$ 
and that
\[|x^{(1,n_1,\cdots,n_k)}_{n+1,j}|\ge 4^{-(j-a^{(1,n_1,\dots,n_k)})}.\]
It follows that for any $j-a^{(1,n_1,\dots,n_k)}\le m< j$, we have
\[\|T^m x^{(1,n_1,\cdots,n_k)}_{n+1,j}e^{(1,n_1,\cdots,n_k)}_{j}\|\ge 1\]
and thus $\|B_T^m x\|\ge 1$.
Since $j\le l^{(1,n_1,\cdots,n_k)}$, we get
\[\frac{|\{m\le l^{(1,n_1,\cdots,n_k)}:\|B_T^mx\|\ge 1\}|}{l^{(1,n_1,\cdots,n_k)}}\ge \frac{a^{(1,n_1,\dots,n_k)}}{l^{(1,n_1,\dots,n_k)}}.\]
By definition of our parameters and if we let $C^{(n_0,\dots,n_{k-1})}=l^{(n_0,\dots,n_{k-1})}-a^{(n_0,\dots,n_{k-1})}$, we have
\begin{align*}
\frac{a^{(1,n_1,\dots,n_k)}}{l^{(1,n_1,\dots,n_k)}}&=\frac{a^{(1,n_1,\dots,n_k)}}{a^{(1,n_1,\dots,n_k +1)}+2n_k+2C^{(n_0,\dots,n_{k-1})}+3}\\
&=\frac{(n_1+\dots+n_k+2C^{(n_0,\dots,n_{k-1})}+3)^2}{(n_1+\dots+n_k+1+2C^{(n_0,\dots,n_{k-1})}+3)^2+ 2n_k+2C^{(n_0,\dots,n_{k-1})}+3}
\end{align*}
and thus
\begin{align*}
1&\le \frac{l^{(1,n_1,\dots,n_k)}}{a^{(1,n_1,\dots,n_k)}}\\
&=  \frac{(n_1+\dots+n_k+1+2C^{(n_0,\dots,n_{k-1})}+3)^2}{(n_1+\dots+n_k+2C^{(n_0,\dots,n_{k-1})}+3)^2}+\frac{2n_k+2C^{(n_0,\dots,n_{k-1})}+3}{(n_1+\dots+n_k+2C^{(n_0,\dots,n_{k-1})}+3)^2}\\
&\le \frac{(n_1+\dots+n_k+1)^2}{(n_1+\dots+n_k)^2}+\frac{2}{n_1+\dots+n_k}.
\end{align*}
Let $\varepsilon>0$ and $x$ a hypercyclic vector for $B_T$ with $\|x\|< 1$. Then there exist infinitely many $n$ such that $\|B^n_T x-y\|<\frac{1}{2}$ and thus infinitely many $(1,n_1,\cdots,n_k)$ such that 
\[\frac{|\{m\le l^{(1,n_1,\cdots,n_k)}:\|B_T^mx\|\ge 1\}|}{l^{(1,n_1,\cdots,n_k)}}\ge \frac{a^{(1,n_1,\dots,n_k)}}{l^{(1,n_1,\dots,n_k)}}.\]
In particular, among these ones, there exists $(1,n_1,\cdots,n_k)$ such that $n_1+\dots+n_k$ is arbitrarily big and thus such that
\[\frac{|\{m\le l^{(1,n_1,\cdots,n_k)}:\|B_T^mx\|\ge 1\}|}{l^{(1,n_1,\cdots,n_k)}}\ge 1-\varepsilon.\]
This implies that $\underline{\text{dens}}\{m\ge 1:\|B_T^mx\|< 1\}=0$ and that $x$ is not frequently hypercyclic for $B_T$. Therefore we can conclude that $B_T$ is not frequently hypercyclic.
\end{proof}
\begin{remark}
We can also deduce from the previous proposition that there exist a Banach space $X$ and an operator $T$ on $X$ such that $B_T$ is mixing but not frequently hypercyclic on $c_0(X,J)$.
\end{remark}

We can now wonder if the proof of Bayart-Ruzsa for weighted shifts on $\ell^p(\mathbb{K},J)$ does not work in our context because the conditions obtained in Theorem~\ref{fhcrusza} are too weak or because frequent hypercyclicity itself is too weak to imply chaos. We show that we have no hope to extend the equivalence between frequent hypercyclicity and chaos to the operators $B_T$ on $\ell^p(X,J)$.

\begin{prop}\label{fhcnotchaos}
Let $1\le p<\infty$. There exist a rooted directed tree $V$ and a weighted shift $T$ on $X=\ell^1(V)$ such that $B_T$ is frequently hypercyclic but not chaotic on $\ell^p(X,J)$. 
\end{prop}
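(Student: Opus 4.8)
The plan is to manufacture, in the spirit of Propositions~\ref{mixc0} and~\ref{proptree}, a rooted directed tree $V$ in which every vertex sitting at the end of a branch spawns one finite branch of each length $n\ge 1$, together with a weighted shift $T$ whose weights equal $4$ along the branches (so that $T$ pushes mass toward the root) and a small junction factor $2^{-N^{(\cdots)}}$ at each branching point, the exponents being tuned by a balance relation of the type $N^{(\cdots)}\ge 2\times(\text{length of the parent branch})$. The essential choice is to put everything on the inner space $X=\ell^1(V)$: as I explain below, the $\ell^1$-norm is precisely what forbids a \emph{coherent} backward orbit from being cheap while still allowing \emph{independent} backward orbits (one per iterate) to be cheap. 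I would carry out the argument for $J=\mathbb{Z}$; the case $J=\mathbb{N}$ is analogous (frequent hypercyclicity even descends from $\mathbb{Z}$ to $\mathbb{N}$ through the factor map of Remark~\ref{comprem}, while the failure of chaos is read off directly from Corollary~\ref{cor-chaos-uni} in both cases).

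For frequent hypercyclicity I would apply the criterion of Theorem~\ref{unc-fr-S_n} to $B_T$ on $\ell^p(X,\mathbb{Z})$, taking $X_0$ to be the finitely supported sequences of finitely supported tree vectors and defining $S_n$ by sending an entry $u$ at outer coordinate $i$ to a length-$n$ backward $T$-orbit $z_n(u)$ placed at coordinate $i+n$, chosen on a fresh branch so that the supports are disjoint across $n$. Condition~(4), $B_T^nS_nx=x$, holds by construction, and condition~(1) is \emph{free}: since $V$ is rooted, $T^nu=0$ for $n$ large, so $\sum_{m=0}^{k}B_T^{k}S_{k-m}x$ is a fixed finite sum, uniformly in $k$. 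Descending $z_n(u)$ through the weight-$4$ region makes $\|z_n(u)\|_{\ell^1(V)}$ decay geometrically, which, together with the disjointness of supports, yields the unconditional convergence required in condition~(3) for every $p$.

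The delicate point, which I expect to be the main obstacle, is condition~(2): the uniform-in-$k$ unconditional convergence of $\sum_n B_T^{k}S_{k+n}x$, whose $n$-th term carries the inner value $T^{k}z_{k+n}(u)$, itself a length-$n$ backward orbit of $u$. Since $T^{k+n}z_{k+n}(u)=u$ has norm one while $\|z_{k+n}(u)\|$ is tiny, the forward iterates $T^{k}z_{k+n}(u)$ must climb back up to norm one, and a careless placement makes them \emph{overshoot} to $\sim 2^{N}$ at a junction, destroying uniformity in $k$ — precisely because the junction exponents $N$ must tend to infinity for $T$ to be bounded (the factors $2^{-N}$ over the infinitely many children must sum to a finite number). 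Reconciling these demands is the crux: one must insert a weight-one buffer and place the backward orbits deep enough that their \emph{whole} forward orbit is dominated by a summable profile depending only on the number $n$ of remaining steps, so that $\sum_n\|T^{k}z_{k+n}(u)\|^p_{\ell^1(V)}$ stays bounded uniformly in $k$. This balancing of the parameters $a^{(\cdots)},N^{(\cdots)},l^{(\cdots)}$, of the kind already carried out in Propositions~\ref{mixc0} and~\ref{proptree}, is where the bulk of the computation lies.

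For the negative part I would use the chaos characterization of Corollary~\ref{cor-chaos-uni}: were $B_T$ chaotic on $\ell^p(X,\mathbb{Z})$, a set with dense span would admit, for each of its vectors, a \emph{single coherent} backward orbit $(x_n)$ with $Tx_n=x_{n-1}$ and $\sum_n\|x_n\|^p_{\ell^1(V)}<\infty$, and I would rule this out for every nonzero vector. A coherent orbit is forced to descend, and since the branches are finite it must cross junctions again and again; at each junction the $\ell^1$-minimal preimage is obtained by concentrating the mass on a single child, since spreading it over the infinitely many children only \emph{increases} the $\ell^1$-norm, so the junction costs a factor $2^{N}$ which, by the balance $N\ge 2\times(\text{length of the branch just descended})$, at least cancels the $4^{-(\text{length})}$ saved along that branch. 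Hence $\|x_n\|_{\ell^1(V)}$ never tends to $0$ and no coherent orbit is $p$-summable, so the chaotic vectors reduce to $\{0\}$ and $B_T$ is not chaotic. This is exactly where $\ell^1(V)$ is indispensable: over $\ell^p(V)$ with $p>1$ one could spread a junction preimage over many children and make its norm arbitrarily small, producing cheap coherent orbits and hence chaos — the $\ell^p$-regime already isolated in Proposition~\ref{proptree}. The whole difficulty is thus the genuine tension between keeping independent backward orbits cheap, so as to feed Theorem~\ref{unc-fr-S_n}, while the $\ell^1$-norm keeps every coherent one expensive, so as to block Corollary~\ref{cor-chaos-uni}.
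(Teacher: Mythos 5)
Your plan follows essentially the same route as the paper: the same tree (each terminal vertex of a segment spawning one finite branch of every length), the same weight scheme (weight $4$ along segments with a single cheap edge of weight $2^{-N}$ per segment, $N$ roughly twice the segment length), the same reason for taking $X=\ell^1(V)$, frequent hypercyclicity via Theorem~\ref{unc-fr-S_n} with a fresh branch for each $S_n$, and failure of chaos by showing via Proposition~\ref{chaos-uni} that a coherent backward orbit must pick up a factor $2^{N}\ge 4^{\,\text{length}}$ at every segment it crosses, so its norm cannot tend to $0$. Two remarks on the points you leave open. First, the step you single out as the crux, condition (2) of Theorem~\ref{unc-fr-S_n}, resolves more easily than you fear: with the paper's parameters ($l^{(1,n_1,\dots,n_k)}=n_k+2$, $N^{(1,n_1,\dots,n_k)}=2l^{(1,n_1,\dots,n_k)}+n_k$) one gets $\|B_T^{K}S_{K+n}x\|=2^{N}4^{-(n-1)}$ for all $n>l-j$, a bound independent of $K$ (it depends on the fixed $x\in X_0$ through $N$, which is all the criterion requires), so no weight-one buffer of the kind used in Proposition~\ref{proptree} is needed here. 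Second, your proposal is a blueprint rather than a proof: the explicit choice of parameters and the verification of both the summability estimates and the pigeonhole argument at the branching vertices (some child must receive at least $2^{-n_k}$ of the mass, which then gets amplified back above $1$ by the end of that child's segment) are exactly where the content of the paper's proof lies, and you defer them. Since the mechanism you describe is the correct one and the paper confirms the parameters can be balanced, I see no step that would fail, only work left undone.
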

\begin{proof}
As in the proof of Proposition~\ref{proptree}, we consider $\mathcal{N}=\{(n_0,n_1,\dots,n_k): n_0=1,\ k\ge 0\ \text{and $n_j\in \mathbb{N}$ for any $1\le j\le k$}\}$. We let $l^{(1,n_1,\dots,n_k)}=n_k+2$ for any $(1,n_1,\dots,n_k)\in\mathcal{N}$ and consider the rooted directed tree $V$ where $e^{(1)}_1$ is the root and where for each $(1,n_1,\dots,n_k)\in\mathcal{N}$, 
\[\text{Chi}(e^{(1,n_1,\dots,n_k)}_{l^{(1,n_1,\dots,n_k)}})=\{e^{(1,n_1,\dots,n_k,n_{k+1})}_{1}: n_{k+1} \ge 1\}\]
and for all $1\le j< l^{(1,n_1,\dots,n_k)}$, \[\text{Chi}(e^{(1,n_1,\dots,n_k)}_{j})=\{e^{(1,n_1,\dots,n_k)}_{j+1}\}.\]

Let $N^{(1,n_1,\dots,n_k)}=2l^{(1,n_1,\dots,n_k)}+n_k$ for any $(1,n_1,\dots,n_k)\in\mathcal{N}$. We consider the weighted shift $T$ on $X= \ell^1(V)$ given by
\begin{itemize}
\item $Te^{(1)}_1=0$,
\item $Te^{(n_0,\dots,n_k)}_1=4e^{(n_0,\dots,n_{k-1})}_{l^{(n_0,\dots,n_{k-1})}}$ when $n_0=1$ and $k\ge 1$,
\item $Te^{(1,n_1,\dots,n_k)}_j=4e^{(1,n_1,\dots,n_k)}_{j-1}$ if $1< j<l^{(1,n_1,\dots,n_k)}$,
\item $Te^{(1,n_1,\dots,n_k)}_{l^{(1,n_1,\dots,n_k)}}=2^{-N^{(1,n_1,\dots,n_{k})}}e^{(1,n_1,\dots,n_k)}_{l^{(1,n_1,\dots,n_k)}-1}$.
\end{itemize}
We notice that our choice of $N^{(1,n_1,\dots,n_k)}$ implies that the product of weights along the segment $(e^{(1,n_1,\dots,n_k)}_j)_{1\le j\le l^{(1,n_1,\dots,n_k)}}$ is smaller than $1$. We will use this fact to prove that $B_T$ has no fixed point (excepted $0$).\\

We first show that $B_T$ is frequently hypercyclic on $\ell^p(X,J)$ by using Theorem~\ref{unc-fr-S_n}. Let $X_0=c_{00}(c_{00}(V),J)$. Let $m\in J$, $(1,n_1,\dots,n_k)\in \mathcal{N}$ and $1\le j\le l^{(1,n_1,\dots,n_k)}$. It suffices to show that Theorem~\ref{unc-fr-S_n} is satisfied for any $x\in X_0$ given by $x_m= e^{(1,n_1,\dots,n_k)}_j$ and $x_{m'}=0$ if $m'\ne m$. To this end, we let $S_0x=x$, and for any $n\ge 1$, $S_n x=y$ with $y_{m'+n}=0$ if $m'\ne m$ and 
\[y_{m+n} = \left\{
    \begin{array}{ll}
        4^{-n}e^{(1,n_1,\dots,n_k)}_{j+n} & \mbox{if } j+n<l^{(1,n_1,\dots,n_k)} \\
        4^{-(n-1)}2^{N^{(1,n_1,\dots,n_{k})}}e^{(1,n_1,\dots,n_k)}_{j+n} & \mbox{if } j+n=l^{(1,n_1,\dots,n_k)}\\
        4^{-(n-1)}2^{N^{(1,n_1,\dots,n_{k})}}e^{(1,n_1,\dots,n_k,j+n-l^{(1,n_1,\dots,n_k)})}_{j+n-l^{(1,n_1,\dots,n_k)}}& \mbox{if } j+n>l^{(1,n_1,\dots,n_k)}\\ 
    \end{array}
\right.\]
which is well-defined since for any $m\ge 1$, $m<l^{(1,n_1,\dots,n_k,m)}$. We remark that in the third case, our choice of the branch depends on $n$ and that for every $n$, we have $B_T^n S_nx=x$. We can therefore deduce that the operator $B_T$ is frequently hypercyclic since
\begin{enumerate}
\item $\sum_{n=0}^{K} B_T^KS_{K-n} x=\sum_{n=0}^K B_T^n x$ converges uniformly in $K$ because$B_T^n x=0$ for all $n\geq j+l^{(1,\dots ,n_{k-1})}+\dots +l^{(1)}$;
\item $\sum_{n=0}^{\infty}B_T^KS_{K+n}x$ converges unconditionally in $\ell^p(X,J)$ uniformly in $K$ because for all $n> l^{(1,n_1,\dots ,n_k)}-j$, $\|B^KS_{K+n}x\|=2^{N^{(1,n_1,\dots ,n_k)}}4^{-(n-1)}$;
\item $\sum_{n=0}^{\infty} S_n x$ converges unconditionally because \[\sum_{n=1}^{\infty} \|S_n x\|\le \sum_{n=0}^{\infty}4^{-(n-1)}2^{N^{(1,n_1,\dots,n_k)}}<\infty;\]
\end{enumerate}

We now show that $B_T$ is not chaotic on $\ell^p(X,J)$. Assume that $B_T$ is chaotic. By Proposition~\ref{chaos-uni}, there then exists a fixed point $x\in \ell^p(X,J)$ for $B_T$ such that $|x_{l^{(1)},l^{(1)}}^{(1)}|\ge 1$. We show that for any $(1,n_1,\dots,n_{k-1})\in \mathcal{N}$ if $|x_{n,l^{(1,n_1,\dots,n_{k-1})}}^{(1,n_1,\dots,n_{k-1})}|\ge 1$, where $n=l^{(1)}+\dots +l^{(1,n_1,\dots ,n_{k-1})}$, then there exists $n_k$ such that \[|x_{n+l^{(1,n_1,\dots ,n_k)},l^{(1,n_1,\dots,n_{k})}}^{(1,n_1,\dots,n_{k})}|\ge 1.\] This will be a contradiction with $x \in \ell^p(X,J)$.
Indeed, if $x$ is a fixed point for $B_T$ satisfying $|x_{n,l^{(1,n_1,\dots,n_{k-1})}}^{(1,n_1,\dots,n_{k-1})}|\ge 1$, there exists $n_k\ge 1$ such that 
\[\|T(x_{n+1,1}^{(1,n_1,\dots,n_{k})}e_{1}^{(1,n_1,\dots,n_{k})})\|\ge 2^{-n_k}\]
and thus such that $4|x_{n+1,1}^{(1,n_1,\dots,n_{k})}|\ge 2^{-n_k}$.
Since $x$ is a fixed point, we then also have
\[|x_{n+l^{(1,n_1,\dots ,n_k)},l^{(1,n_1,\dots,n_{k})}}^{(1,n_1,\dots,n_{k})}|\ge 4^{-l^{(1,n_1,\dots,n_{k})}+2}2^{N^{(1,n_1,\dots,n_{k})}}2^{-n_k-2}.\]
Therefore, since $N^{(1,n_1,\dots,n_{k})}=2l^{(1,n_1,\dots,n_{k})}+n_k$, we get $|x_{n+l^{(1,n_1,\dots ,n_k)},l^{(1,n_1,\dots,n_{k})}}^{(1,n_1,\dots,n_{k})}|\ge 1$. By induction, we deduce that $x$ will have infinitely many coordinates with a norm bigger than $1$. Contradiction.
\end{proof}

\begin{remark}
  Let us mention that the weighted shift $T$ on the rooted tree $V$ of the previous proposition, satisfies that it is frequently hypercyclic but not chaotic on $\ell^1(V)$. Indeed, the fact that $T$ is frequently hypercyclic follows immediately from Proposition~\ref{fhcnotchaos} (for $p=1$) and Proposition~\ref{conj-l^1}. The argument that $T$ is not chaotic is similar to the  proof that $B_T$ is not chaotic.
\end{remark}

We conclude from all the previous results that we have the following implications between the five investigated dynamical properties for the operators $(B_{(T_k)})$ and that no other implication is true on $\ell^p((X_k)_k,J)$ or on $c_0((X_k)_k,J)$ (even if we restrict ourselves to the operators $B_T$) as mentioned in the introduction.

\begin{figure}[H]

\begin{subfigure}{.4\textwidth}
      \begin{tikzpicture}[xscale=1,yscale=1.5,>=stealth',shorten >=1pt]
       
        \path (-2,5) node[] (q1) {Chaotic};
        \path (-2,4.2) node[] (q0) {Frequently hypercyclic};
        \path (-2,3.4) node[] (q2) {Mixing};

        \path (-2,2.6) node[] (q3) {Weakly mixing};
        \path (-2,1.8) node[] (q4) {Hypercyclic};

		\draw[double,arrows=->] (q1) -- (q0);
        \draw[double,arrows=->] (q0) -- (q2);
        \draw[double,arrows=->] (q2) -- (q3);
        \draw[double,arrows=<->] (q3) -- (q4);

        \end{tikzpicture}
    \caption*{Links for $B_{(T_k)}$ on $\ell^p((X_k)_k,J)$}
\end{subfigure}
\hspace*{0.2cm}
\begin{subfigure}{.4\textwidth}
      \begin{tikzpicture}[xscale=1,yscale=1.5,>=stealth',shorten >=1pt]
       
        \path (6,3.4) node[] (q2) {Mixing};
        \path (4,4.2) node[] (q1) {Chaotic};
        \path (2,3.4) node[] (q0) {Frequently hypercyclic};

        \path (4,2.6) node[] (q3) {Weakly mixing};
        \path (4,1.7) node[] (q4) {Hypercyclic};

		\draw[double,arrows=->] (q1) -- (q0);
        \draw[double,arrows=->] (q1) -- (q2);
         \draw[double,arrows=->] (q0) -- (q3);        
         \draw[double,arrows=->] (q2) -- (q3);
        \draw[double,arrows=<->] (q3) -- (q4);

        \end{tikzpicture}
    \caption*{Links for $B_{(T_k)}$ on $c_0((X_k)_k,J)$}
\end{subfigure}
\end{figure}

\section{Kitai Criterion and Frequent Hypercyclicity Criterion}\label{Criteria}

In Linear dynamics, an important way to deduce some dynamical properties for an operator consists in showing that the operator possesses a dense set of orbits tending to $0$ (more or less rapidly) and a dense set of vectors with "backward" orbit tending to $0$ (more or less rapidly). Two important examples of such criteria are the Kitai Criterion and the Frequent Hypercyclicity Criterion. We show in this section how these criteria are related to the dynamical behavior of $B_T$.

\subsection{Kitai Criterion}

The criterion given by Kitai is the following (see \cite{Kit}). 

\begin{theorem}[Kitai Criterion]
If there are dense subsets $X_0,Y_0$ in $X$ and a map $S:Y_0\to Y_0$ such that
\begin{enumerate}
\item $T^nx\to 0$ for each $x\in X_0$,
\item $S^ny\to 0$ for each $y\in Y_0$,
\item $TSy=y$ for each $y\in Y_0$,
\end{enumerate}
then $T$ is mixing.
\end{theorem}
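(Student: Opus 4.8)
The plan is to verify mixing directly from the definition: for arbitrary non-empty open sets $U$ and $V$, I will show that $N_T(U,V)=\{n\ge 0: T^nU\cap V\ne\emptyset\}$ is cofinite by exhibiting, for every large $n$, a single vector that lies in $U$ and is sent into $V$ by $T^n$.

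First I would fix non-empty open $U,V\subset X$ and use the density of $X_0$ and $Y_0$ to choose $x\in X_0\cap U$ and $y\in Y_0\cap V$. The candidate witness will be $z_n:=x+S^ny$. Here two observations do all the work. On the one hand, condition (2) gives $S^ny\to 0$, so $z_n\to x$; since $U$ is open and $x\in U$, we get $z_n\in U$ for all $n$ large enough. On the other hand, iterating condition (3) along the orbit of $y$ under $S$ (which stays in $Y_0$ because $S:Y_0\to Y_0$) yields $T^nS^ny=y$ for every $n$: indeed $T^nS^ny=T^{n-1}(TS)(S^{n-1}y)=T^{n-1}S^{n-1}y$, and induction gives the claim.

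Then I would combine these with condition (1). Since $T^nz_n=T^nx+T^nS^ny=T^nx+y$ and $T^nx\to 0$ by (1), we obtain $T^nz_n\to y\in V$, so $T^nz_n\in V$ for all large $n$. Consequently, for every sufficiently large $n$ the vector $z_n$ witnesses $T^nU\cap V\ne\emptyset$, i.e. $N_T(U,V)$ contains a cofinite set, which is exactly what mixing requires.

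The argument is short and presents no real obstacle; the only points to watch are that $S$ maps $Y_0$ into itself so that the iterated identity $T^nS^ny=y$ makes sense and holds for all $n$, and that the single family $(z_n)_n$ simultaneously realizes membership in $U$ and image in $V$ for all large $n$. This simultaneity is precisely what upgrades mere transitivity to the cofiniteness of $N_T(U,V)$, and hence to mixing rather than just hypercyclicity.
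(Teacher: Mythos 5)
Your argument is correct and is the standard proof of the Kitai Criterion: the witness $z_n=x+S^ny$ lies in $U$ for large $n$ because $S^ny\to 0$, and $T^nz_n=T^nx+y\to y\in V$ because $T^nS^ny=y$ (which you rightly justify by induction using $S(Y_0)\subset Y_0$ and $TS=\mathrm{Id}$ on $Y_0$), so $N_T(U,V)$ is cofinite. The paper states this theorem without proof, citing Kitai, so there is nothing to compare against; your write-up would serve as a complete proof.
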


However, it is well-known that we can replace the map $S$ and its iterates by a sequence of maps $(S_n)$ (see \cite{BePe}). In doing so, we get the Hypercyclicity Criterion along the whole sequence $(n)$.

\begin{theorem}[Hypercyclicity Criterion along $(n)$]\label{kit5}
If there are dense subsets $X_0,Y_0$ in $X$ and maps $S_n:Y_0\to X$ such that
\begin{enumerate}
\item $T^nx\to 0$ for each $x\in X_0$,
\item $S_ny\to 0$ for each $y\in Y_0$,
\item $T^nS_ny\to y$ for each $y\in Y_0$,
\end{enumerate}
then $T$ is mixing.
\end{theorem}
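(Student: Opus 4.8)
The plan is to verify the definition of mixing directly: I must show that for every pair of non-empty open sets $U,V\subset X$, the set $N_T(U,V)=\{n\ge 0:T^nU\cap V\ne\emptyset\}$ is cofinite. The strategy is the classical one of combining all three hypotheses into a single family of test vectors that simultaneously lies in $U$ and is sent into $V$ by $T^n$, for all large $n$.

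First I would fix non-empty open sets $U$ and $V$ in $X$. Using that $X_0$ is dense, I pick $x\in X_0\cap U$, and using that $Y_0$ is dense, I pick $y\in Y_0\cap V$. The point of choosing the approximating points \emph{inside} $X_0$ and $Y_0$ is that the hypotheses (1)--(3) only furnish control on vectors of these two dense sets. I would then introduce the auxiliary vectors $z_n:=x+S_ny\in X$, which mix a "source" part $x$ controlled by condition (1) and a "correcting" part $S_ny$ controlled by conditions (2) and (3).

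Next I would track two limits as $n\to\infty$. On the one hand, by hypothesis (2) we have $S_ny\to 0$, hence $z_n=x+S_ny\to x\in U$; since $U$ is open, there is $n_1$ with $z_n\in U$ for all $n\ge n_1$. On the other hand, applying $T^n$ and using linearity gives $T^nz_n=T^nx+T^nS_ny$, and by hypotheses (1) and (3) this tends to $0+y=y\in V$; since $V$ is open, there is $n_2$ with $T^nz_n\in V$ for all $n\ge n_2$. Setting $n_0=\max\{n_1,n_2\}$, for every $n\ge n_0$ the vector $z_n$ witnesses that $T^nU\cap V\ne\emptyset$, so $N_T(U,V)\supset[n_0,\infty)$ is cofinite. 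As $U,V$ were arbitrary, $T$ is mixing.

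I do not expect any genuine obstacle here; the only point requiring care is that the three convergences must be invoked for the \emph{same} vector and the \emph{same} index $n$, which is exactly why the single test vector $x+S_ny$ (rather than two separate vectors) does the job, and why the openness of $U$ and $V$ is used to upgrade the two limits into eventual membership. The possibility of replacing a fixed $S$ by a sequence $(S_n)$ is transparent in this argument, since $S$ is never iterated: one only ever evaluates $S_n$ once and then applies $T^n$.
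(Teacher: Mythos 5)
Your proof is correct and is exactly the standard argument: the paper does not prove Theorem~\ref{kit5} itself but cites it as well known (referring to B\`es--Peris), and the classical proof is precisely the one you give, testing mixing on the single vector $x+S_ny$ with $x\in X_0\cap U$ and $y\in Y_0\cap V$. Nothing is missing.
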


These criteria do not characterize mixing operators. Indeed, it was shown by Grivaux in \cite{Gri}, that there exists a mixing operator $T$ such that for any non-zero vector $x$, $T^nx$ does not tend to $0$. We investigate the links between the dynamical properties of $B_T$ and these two criteria (with the additional assumption that $X_0=Y_0$ or not).

\begin{theorem}\label{kitai Thm}
Let $X$ be a separable Banach space and $T\in L(X)$.
Given the following assertions : 
\begin{enumerate}
\item $B_T$ is chaotic on $c_0(X,\mathbb{Z})$,
\item $T$ satisfies the Kitai Criterion with $X_0=Y_0$,
\item $T$ satisfies the Kitai Criterion,
\item $T$ satisfies the Hypercyclicity Criterion along $(n)$ with $X_0=Y_0$,
\item $T$ satisfies the Hypercyclicity Criterion along $(n)$,
\item $T$ is mixing,
\item $B_T$ is mixing on $c_0(X,\mathbb{Z})$,
\end{enumerate}
then 
\[(1)\Leftrightarrow (2) \Rightarrow (3) \Rightarrow (4) \Leftrightarrow (5) \Rightarrow (6) \Leftrightarrow (7).\]
and the other implications are false in general.
\end{theorem}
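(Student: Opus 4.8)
The plan is to collapse the long chain to a handful of genuinely new steps, reading the two endpoints off results already established and dispatching the formal implications. The equivalence $(1)\Leftrightarrow(2)$ is a restatement of the chaos characterization (Corollary~\ref{cor-chaos-uni}) on $c_0(X,\mathbb Z)$; the equivalence $(6)\Leftrightarrow(7)$ is exactly Corollary~\ref{bilmix} (equivalently Corollary~\ref{mix,uni,T}); the implication $(5)\Rightarrow(6)$ is the content of Theorem~\ref{kit5}; and $(2)\Rightarrow(3)$, $(4)\Rightarrow(5)$, $(3)\Rightarrow(5)$ are immediate, since imposing $X_0=Y_0$, or taking $S_n:=S^n$, turns a witness for the stronger statement into a witness for the weaker one. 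After these, the only substantial arrow left is the merge $(5)\Rightarrow(4)$, which together with $(3)\Rightarrow(5)$ yields $(3)\Rightarrow(4)$ and with $(4)\Rightarrow(5)$ gives $(4)\Leftrightarrow(5)$.

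For $(1)\Leftrightarrow(2)$ I would argue directly through Corollary~\ref{cor-chaos-uni}. Assuming (2), with dense $X_0=Y_0=:D_0$, a map $S$ with $TS=\mathrm{Id}$ and $T^nx\to0$, $S^nx\to0$ on $D_0$, set $x_n:=S^{n-1}x_1$ for $x_1\in D_0$; then $Tx_n=x_{n-1}$, $x_n\to0$ and $T^jx_1\to0$, so $(\dots,T^2x_1,Tx_1,x_1,x_2,\dots)\in c_0(X,\mathbb Z)$ and Corollary~\ref{cor-chaos-uni} gives (1). Conversely, from (1) take $\mathcal D$ and, for each $x_1\in\mathcal D$, the backward orbit $(x_n)$ it supplies; let $Y_0=X_0$ be the span of all these orbit vectors (dense, since it contains $\mathcal D$) and let $S$ be the shift $x_n\mapsto x_{n+1}$. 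Then $TS=\mathrm{Id}$, $S^nx_m=x_{m+n}\to0$, and $T^nx_m\to0$ (it equals $x_{m-n}$ for $n<m$ and $T^{n-m+1}x_1\to0$ for $n\ge m$), so (2) holds; the only care needed is to choose the backward orbits so that the vectors $\{x_n\}$ are linearly independent (extract a Hamel basis from $\mathcal D$ and build independent orbits), which makes $S$ well defined.

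The heart of the proof is $(5)\Rightarrow(4)$: from the two-set Hypercyclicity Criterion along $(n)$ one must produce a single dense set. The naïve choice $X_0+Y_0$ fails because there is no control on $T^ny$ for $y\in Y_0$. Instead I would keep $X_0$, where forward orbits already vanish, and manufacture inverse maps on it by a block diagonalisation. Fix $x\in X_0$; by density pick $y^{(k)}\in Y_0$ with $\|y^{(k)}-x\|<1/k$, and, using $S_ny^{(k)}\to0$ and $T^nS_ny^{(k)}\to y^{(k)}$ for the \emph{fixed} vector $y^{(k)}$, choose $N_1<N_2<\cdots$ so that $\|S_ny^{(k)}\|<1/k$ and $\|T^nS_ny^{(k)}-y^{(k)}\|<1/k$ for $n\ge N_k$. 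Setting $R_nx:=S_ny^{(k)}$ for $n\in[N_k,N_{k+1})$ gives $R_nx\to0$ and $\|T^nR_nx-x\|<2/k\to0$, while $T^nx\to0$ since $x\in X_0$. Thus $X_0$ itself witnesses the Criterion along $(n)$ with $X_0=Y_0$, i.e. (4). The maps $R_n$ need only be maps, so no consistency issue arises; the point is exactly that letting the approximant depend on $n$ only through the block index circumvents the absence of continuity of the $S_n$.

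Finally, to show that no other implication holds I would exhibit the three strictness gaps. Between $\{6,7\}$ and $\{4,5\}$, Grivaux's mixing operator \cite{Gri}, for which no nonzero vector satisfies $T^nx\to0$, cannot satisfy (5), so $(6)\not\Rightarrow(5)$. Between $\{4,5\}$ and $\{3\}$, the rooted-tree weighted shift of Proposition~\ref{mixc0} satisfies the Criterion along $(n)$ with $X_0=Y_0=c_{00}(V)$ (forward orbits of finitely supported vectors vanish, and its varying-branch right inverses $S_n$ obey $S_ny\to0$, $T^nS_ny=y$), yet it fails the Kitai Criterion: along any fixed backward path the norm at successive branch-starts grows by a factor $\ge4$ (from $N^{(\ldots,1)}\ge 2n_k$ and the weight $4$ inside branches, a more careful bookkeeping ruling out the spreading of mass over several branches), so no single right inverse $S$ can have $S^n\to0$; hence $(4)\not\Rightarrow(3)$. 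Between $\{3\}$ and $\{1,2\}$ one needs an operator satisfying the two-set Kitai Criterion whose $B_T$ is not chaotic on $c_0(X,\mathbb Z)$, i.e. which fails Kitai with $X_0=Y_0$. I expect the two main obstacles to be precisely (a) the merge $(5)\Rightarrow(4)$ above and (b) this last separation $(3)\not\Rightarrow(2)$, the latter being delicate because it is the rigidity of the single map $S$ in Kitai (with $S^n\to0$), as opposed to the flexible $S_n$ of the Criterion along $(n)$, that makes reconciling a forward-null and a backward-null dense set into one genuinely obstructable.
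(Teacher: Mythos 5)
Your reduction of the chain is sound, and your block-diagonal argument for $(5)\Rightarrow(4)$ is a correct alternative to the paper's route: the paper first notes $(5)\Rightarrow(6)$, then uses mixing to build maps $S_n$ on \emph{all} of $X$ with $S_ny\to 0$ and $T^nS_ny\to y$, and finally pairs them with the dense set $X_0$ from $(5)$; your version, which keeps $X_0$ and defines $R_nx:=S_ny^{(k)}$ for $n$ in the $k$-th block using approximants $y^{(k)}\in Y_0$ of $x$, reaches the same conclusion without passing through mixing. Your treatment of $(1)\Leftrightarrow(2)$ is also essentially the paper's (the paper resolves the well-definedness of $S$ by a canonical-occurrence map $\phi$ rather than by forcing linear independence of the orbits; either device works, but note that you must also check $S^ny\to 0$ survives whatever identification you make).

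The genuine gap is that the theorem asserts \emph{all} remaining implications fail, and you do not prove $(3)\not\Rightarrow(2)$ --- you explicitly defer it as an ``obstacle''. This is in fact the bulk of the paper's proof: a bespoke ``weighted shift on a graph'' on $\ell^1(V)$, built from a backbone $(e_n)$ feeding infinitely many weighted branches $(e^{(n,k)}_j)$, which satisfies the Kitai Criterion (with $X_0=\bigcup_m\ker(T^m)$ shown dense by explicit approximants, and $Y_0=c_{00}(V)$ with an explicit right inverse) but fails the Kitai Criterion with $X_0=Y_0$ --- indeed fails the Gethner--Shapiro Criterion with $X_0=Y_0$ along \emph{every} sequence, because any $x$ near $e_1$ with $\|T^{n_l}x\|$ small must carry a coordinate forcing $\|S^{n_l}x\|\ge 1/4$. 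Without such a construction the statement ``the other implications are false in general'' is unproved. A secondary weakness: for $(4)\not\Rightarrow(3)$ your direct norm-growth argument (``a more careful bookkeeping ruling out the spreading of mass over several branches'') is not carried out and is nontrivial precisely because mass can spread over infinitely many children; the paper avoids this entirely by observing that the Kitai Criterion would force $(y,Sy,S^2y,\dots)\in c_0(X,\mathbb{N})$ for all $y$ in a dense set, hence $B_T$ chaotic on $c_0(X,\mathbb{N})$ by Corollary~\ref{cor-chaos-uni}, contradicting Proposition~\ref{mixc0}. You should adopt that indirect argument rather than attempt the bookkeeping.
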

\begin{proof}\text{}\\
$(1)\Rightarrow (2)$. If we denote by $\text{Fix}(B_T)$ the set of fixed points for $B_T$, it follows from Proposition~\ref{chaos-uni} that there exists a sequence $(z_n)_{n\in \mathbb{N}}\subset \text{Fix}(B_T)\backslash\{0\}$ such that $\{z_{n,m}\in X:n\in \mathbb{N}, m\in \mathbb{Z}\}$ is dense in $X$. We let $\phi:\mathbb{N}\times \mathbb{Z} \to \mathbb{N}\times \mathbb{Z}$ be given by $\phi(n,m)=(n,m)$ if $z_{n,m}=0$ and by $\phi(n,m)=(k,j)$ if $z_{n,m}\ne 0$ so that $z_{n,m}=z_{k,j}$, that for any $k'<k$, any $j'\in \mathbb{Z}$, $z_{n,m}\ne z_{k',j'}$ and that for any $j'>j$, $z_{n,m}\ne z_{k,j'}$. The map $\phi$ is well-defined because each sequence $z_n\in c_0(X,\mathbb{Z})$. Then it suffices to consider $X_0=Y_0=\{z_{n,m}\in X:n\in \mathbb{N}, m\in \mathbb{Z}\}$, $S(0)=0$ and if $\phi(n,m)=(k,j)$ and $z_{n,m}\ne 0$ to let $Sz_{n,m}=z_{k,j+1}$.\\
$(2)\Rightarrow (1)$. It suffices to remark that for every $k\in \mathbb{Z}$, for every $x_k\in X_0$, we have
\[(\cdots,T^2x_k,T x_k,x_k,S x_{k},S^2x_k\cdots)\in c_0(X,\mathbb{Z}).\]
We can then conclude by applying Proposition~\ref{chaos-uni}.\\
$(2) \Rightarrow (3)$ Obvious.\\
$(3) \Rightarrow (5)$ Obvious.\\
$(4) \Rightarrow (5)$ Obvious.\\
$(5) \Rightarrow (6)$ Theorem~\ref{kit5}.\\
$(6) \Leftrightarrow (7)$ Corollary~\ref{bilmix}.\\
$(5) \Rightarrow (4)$ Since $(5) \Rightarrow (6)$, we know that $T$ is mixing. Therefore, it is enough to show that if $T$ is mixing, there are always maps $S_n:X\to X$ such that for each $y\in X$, $S^ny\to 0$  and $T^nS_ny\to y$. Let $y\in X$. Since $T$ is mixing, there exists an increasing sequence $(N_n)_{n\ge 1}$ such that for any $k\ge N_n$, $T^k(B_X(0,2^{-n}))\cap B_X(y,2^{-n})\ne \emptyset$. We can therefore let for any $k\le N_1$, $S_ky=0$ and for any $N_n\le k<N_{n+1}$, $S_ky=y_k$ with $y_k\in B_X(0,2^{-n})$ and $T^ky_k\in B(y,2^{-n})$ so that $S_ky\to 0$ and $T^kS_ky\to y$. Since $(5)$ gives us also a dense set $X_0$ in $X$ such that $T^nx\to 0$ for each $x\in X_0$, we get $(4)$.\\

To complete the proof, we still need to remark that there exists an operator satisfying $(6)$ but not $(5)$, that there exists an operator satisfying $(4)$ but not $(3)$ and that there exists an operator satisfying $(3)$ but not $(2)$.\\

An example of operator satisfying $(6)$ but not $(5)$ was given by Grivaux~\cite{Gri}. She showed that there exists a mixing operator $T$ on $X$ such that for every $x\in X\backslash\{0\}$, $T^nx$ does not tend to $0$. In particular, $T$ does not satisfy $(5)$.\\

An example of an operator satisfying $(4)$ but not $(3)$ is given by the weighted shift $T$ on $c_0(V)$ considered in the proof of Proposition~\ref{mixc0}. Indeed, we showed in this proof that for any $y\in X_0=Y_0=c_{00}(V)$, $T^n y$ tends to $0$ because $V$ is a rooted tree and that there exist maps $S_n:Y_0\to Y_0$ such that $T^nS_n y=y$ and $S_n y$ tends to $0$. In other words, $T$ satisfies the Hypercyclicity Criterion along $(n)$ with $X_0=Y_0$. However, $T$ does not satisfy the Kitai Criterion because we know that if $T$ satisfied this criterion then for every $y\in Y_0$, we would get a fixed point $(y,Sy,S^2y,S^3y,\dots)$ for $B_T$ on $c_0(X,\mathbb{N})$. It would then follow from Corollary~\ref{cor-chaos-uni} that $B_T$ is chaotic on $c_0(X,\mathbb{N})$ and we proved in Proposition~\ref{mixc0} that $B_T$ is not chaotic on $c_0(X,\mathbb{N})$.\\
 
 It will be more difficult to get an operator satisfying $(3)$ but not $(2)$. To this end, we are going to define an operator that can be seen as a "weighted shift on graph". Let $X=\ell^1(V)$ where $V$ consists of a family $(e_n)_{n\ge 1}$ and a family $(e^{(n,k)}_j)_{n,k,j\ge 1}$. We consider the operator $T$ given by
\[Te_{n+1}=2e_{n} \quad \text{for any $n\ge 1$ and } \quad Te_1=\sum_{n=1}^{\infty}2^{-n}e^{(n,1)}_1\]
and
\[Te^{(n,k)}_{j+1}=w_{n,k,j+1}e^{(n,k)}_{j} \quad \text{for any $j\ge 1$ and } \quad Te^{(n,k)}_1=2e^{(n,k+1)}_1\] 
where $w_{n,k,j}=4$ for any $k> n+1$ and any $j\ge 2$ and where for $1\le k\le n+1$
\[w_{n,k,j}= \left\{
    \begin{array}{ll}
        4 & \mbox{if } 2\le j\le n+k+1 \\
        \frac{1}{(n+1)4^{n+k}2^{n-k+1}} & \mbox{if } j=n+k+2\\
        2& \mbox{if } j>n+k+2
    \end{array}
\right. .\]

We remark that $T$ is well-defined and continuous on $\ell^1(V)$. We first show that $T$ satisfies the Kitai Criterion. To this end, we let $Y_0=c_{00}(V)$ and the linear map $S$ be given for any $n,k,j\ge 1$ by 
\[Se_n=\frac{1}{2}e_{n+1} \quad\text{ and } \quad Se^{(n,k)}_j=\frac{1}{w_{n,k,j+1}}e^{(n,k)}_{j+1}.\]
It is easy to check that for all $y\in Y_0$, we have $TSy=y$ and $S^ny\to 0$. In order to get the desired dense set $X_0$ such that $T^nx$ tends to $0$ for any $x\in X_0$, we show that $T$ has a dense generalized kernel so that we can consider $X_0=\bigcup_{m\ge 1}\ker(T^m)$. 

Let $n,k,j\ge 1$. For any $K>\max\{k,n+1\}$, the vector
\[y_K=e^{(n,k)}_{j}-2^{K-k}\frac{\prod_{i=2}^{j}w_{n,k,i}}{\prod_{i=2}^{j+K-k}w_{n,K,i}}e^{(n,K)}_{j+K-k}\]
belongs to the generalized kernel of $T$ since \[T^{j+K-k-1}y_K=\left(\prod_{i=2}^{j}w_{n,k,i}\right)T^{K-k}e^{(n,k)}_{1}-2^{K-k}\left(\prod_{i=2}^{j}w_{n,k,i}\right)e^{(n,K)}_{1}=0.\]
Moreover, we have that $(y_K)_K$ tends to $e^{(n,k)}_{j}$ as $K$ tends to $\infty$ since for $K>n+1$, $\prod_{i=2}^{j+K-k}w_{n,K,i}=4^{j+K-k-1}$.
In a similar way, for any $K\ge n$, the vector
\[z_K=e_n-2^{n+K-2} \sum_{m=1}^{\infty}\frac{2^{-m}}{\prod_{i=2}^{n+K}w_{m,K,i}}e^{(m,K)}_{n+K}\]
belongs to the generalized kernel since
\[T^{n+K-1}z_K=2^{n-1}\sum_{m=1}^{\infty}2^{-m}T^{K-1} e^{(m,1)}_1-2^{n+K-2} \sum_{m=1}^{\infty}2^{-m}e^{(m,K)}_{1}=0.\]
We remark that if $1\le m<K-1$ then $\prod_{i=2}^{n+K}w_{m,K,i}=4^{n+K-1}$ because $K>m+1$ and if $m\ge K-1$ then $\prod_{i=2}^{n+K}w_{m,K,i}=4^{n+K-1}$ because $n+K\le 2K\le m+K+1$.
Therefore, the sequence $(z_K)_K$ tends to $e_n$ as $K$ tends to $\infty$ and we deduce that $T$ has a dense generalized kernel and thus satisfies the Kitai Criterion.

We have still to prove that $T$ does not satisfy the Kitai Criterion with $X_0=Y_0$. We will actually show that for any increasing sequence $(n_l)$, $T$ does not admit a dense set $X_0$ in $X$ and a map $S:X_0\to X_0$ such that
\begin{enumerate}
\item $T^{n_l}x\to 0$ for each $x\in X_0$,
\item $S^{n_l}y\to 0$ for each $y\in X_0$,
\item $TSy=y$ for each $y\in X_0$.
\end{enumerate} Assume that these conditions are satisfied for an increasing sequence $(n_l)$ and a dense set $X_0$. Then, there exists $x\in X_0$ such that $\|x-e_1\|<\frac{1}{4}$ and $L\ge 1$ such that for every $l\ge L$, $\|T^{n_l}x\|<\frac{1}{2}$.
Let $l\ge L$ and $y=T^{n_l}x$. For any $n\ge 1$, we then have $|y^{(n,n_l)}_1|<\frac{1}{2}$ and
\[y^{(n,n_l)}_1=\frac{2^{n_l-1}}{2^n}x_1+\sum_{k=1}^{n_l}2^{n_l-k}\left(\prod_{i=2}^{k+1} w_{n,k,i}\right)x^{(n,k)}_{k+1}.\]
In particular, by considering $n=n_l-1$ and since  $|x_1|>\frac{3}{4}$, we deduce that 
\[\sum_{k=1}^{n_l}2^{n_l-k}\left(\prod_{i=2}^{k+1} w_{n_l-1,k,i}\right)|x^{(n_l-1,k)}_{k+1}|\ge 1/4.\]
Thus, there exists $1\le k_l\le n_l$ such that 
\[2^{n_l-k_l}\left(\prod_{i=2}^{k_l+1} w_{n_l-1,k_l,i}\right)|x^{(n_l-1,k_l)}_{k_l+1}|\ge \frac{1}{4n_l}.\]
However, since $TS=Id$ on $X_0$, it then follows that
\[\|S^{n_l}x\|\ge \frac{1}{\prod_{i=k_l+2}^{k_l+n_l+1} w_{n_l-1,k_l,i}}|x^{(n_l-1,k_l)}_{k_l+1}|\ge \frac{1}{\prod_{i=2}^{k_l+n_l+1} w_{n_l-1,k_l,i}}\frac{1}{4n_l 2^{n_l-k_l}}=\frac{1}{4}.\]
We conclude that $S^{n_l}x$ does not tend to $0$. Contradiction.
\end{proof}

This last counterexample allows us to answer an open question posed in \cite{GrPe11} and related to the Gethner-Shapiro Criterion (see \cite{GeSha}). We recall that this criterion is the following.

\begin{theorem}[Gethner-Shapiro Criterion along $(n_k)$]
If there are dense subsets $X_0,Y_0$ in $X$ and a map $S:Y_0\to Y_0$ such that
\begin{enumerate}
\item $T^{n_k}x\to 0$ for each $x\in X_0$,
\item $S^{n_k}y\to 0$ for each $y\in Y_0$,
\item $TSy=y$ for each $y\in Y_0$,
\end{enumerate}
then $T$ is weakly mixing.
\end{theorem}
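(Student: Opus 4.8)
The plan is to recognize the Gethner--Shapiro Criterion along $(n_k)$ as the special case $S_{n_k}:=S^{n_k}$ of a hypercyclicity criterion along the same subsequence, and then to run the standard transitivity argument to obtain weak mixing. Because $S$ maps $Y_0$ into itself, iterating the single map $S$ produces the family $S_{n_k}=S^{n_k}:Y_0\to Y_0$, and the first task is to check that these satisfy the three hypotheses of such a criterion.

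I would begin with the purely algebraic observation that $T^{n}S^{n}y=y$ for every $y\in Y_0$ and every $n\ge 1$. This follows by induction from $TSy=y$: since $S(Y_0)\subset Y_0$, the element $S^{n}y$ again lies in $Y_0$, so $TS(S^{n}y)=S^{n}y$, and hence $T^{n+1}S^{n+1}y=T^{n}\bigl(TS(S^{n}y)\bigr)=T^{n}S^{n}y=y$. Thus, with $S_{n_k}=S^{n_k}$, the condition $T^{n_k}S_{n_k}y\to y$ holds as an exact equality, while $T^{n_k}x\to 0$ on $X_0$ and $S_{n_k}y=S^{n_k}y\to 0$ on $Y_0$ are precisely the given hypotheses (1) and (2).

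Next I would prove weak mixing by showing that $T\oplus T$ is topologically transitive (the argument works verbatim for any $d$-fold sum $T\oplus\cdots\oplus T$). Fix non-empty open sets $U_1,V_1,U_2,V_2$ in $X$. Using density of $X_0$ and $Y_0$, choose $a_i\in X_0\cap U_i$ and $b_i\in Y_0\cap V_i$, and set $u_i^{(k)}=a_i+S^{n_k}b_i$. As $k\to\infty$ we have $u_i^{(k)}\to a_i$ because $S^{n_k}b_i\to 0$, while $T^{n_k}u_i^{(k)}=T^{n_k}a_i+T^{n_k}S^{n_k}b_i=T^{n_k}a_i+b_i\to b_i$. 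Hence for all sufficiently large $k$ one has simultaneously $u_i^{(k)}\in U_i$ and $T^{n_k}u_i^{(k)}\in V_i$ for $i=1,2$, so a single iterate $n_k$ lies in $N_T(U_1,V_1)\cap N_T(U_2,V_2)$. This is topological transitivity of $T\oplus T$, and since $X$ is separable the Birkhoff transitivity theorem upgrades it to hypercyclicity of $T\oplus T$, that is, weak mixing of $T$.

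The main point requiring care is that one cannot simply invoke Theorem~\ref{kit5}: that statement is formulated along the \emph{full} sequence $(n)$ and yields the stronger conclusion of mixing, whereas here the hypotheses hold only along a subsequence $(n_k)$ and can produce no more than weak mixing. Consequently the transitivity argument must be carried out directly along $(n_k)$ rather than quoted, and the output is weak mixing rather than mixing; this subsequence-versus-full-sequence distinction is the only substantive difference from the version already established in the paper.
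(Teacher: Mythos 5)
Your proof is correct. Note that the paper does not actually prove this statement: it is quoted as a known criterion with a citation to Gethner--Shapiro and to the B\`es--Peris framework, so there is no in-paper argument to compare against. What you give is the standard proof: the identity $T^{n}S^{n}y=y$ (valid because $S$ maps $Y_0$ into itself), the perturbation $u_i^{(k)}=a_i+S^{n_k}b_i$ with $a_i\in X_0\cap U_i$, $b_i\in Y_0\cap V_i$, and the observation that a single $n_k$ eventually lies in $N_T(U_1,V_1)\cap N_T(U_2,V_2)$, which yields transitivity of $T\oplus T$ and hence, via Birkhoff's transitivity theorem on the separable space $X\oplus X$, weak mixing in the sense used in this paper. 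Your closing remark is also the right caution: Theorem~\ref{kit5} cannot be invoked here since it is stated along the full sequence and concludes mixing, whereas along a subsequence one only gets weak mixing.
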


It is known that the following assertions are equivalent (see \cite{BePe} and \cite{Pe}):
\begin{itemize}
\item there exists $(n_k)$ such that $T$ satisfies the Gethner-Shapiro Criterion along $(n_k)$,
\item there exists $(n_k)$ such that $T$ satisfies the Hypercyclicity Criterion along $(n_k)$ with $X_0=Y_0$,
\item there exists $(n_k)$ such that $T$ satisfies the Hypercyclicity Criterion along $(n_k)$,
\end{itemize}

However, it was not known if when an operator satisfies the Gethner-Shapiro Criterion along some sequence, this operator has to satisfy the Gethner-Shapiro Criterion along some sequence with the additional assumption that $X_0=Y_0$. Thanks to the proof of Theorem~\ref{kitai Thm}, we can now answer this question in the negative.

\begin{theorem}
There exist an infinite-dimensional separable Banach space $X$ and $T\in L(X)$ such that $T$ satisfies the Gethner-Shapiro Criterion along $(n)$ but for all increasing sequences $(m_k)$, $T$ does not satisfy the Gethner-Shapiro Criterion along $(m_k)$ with $X_0=Y_0$.
\end{theorem}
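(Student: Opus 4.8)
The plan is to reuse the operator constructed in the last part of the proof of Theorem~\ref{kitai Thm} and to observe that the Gethner-Shapiro Criterion along the full sequence $(n)$ is literally the Kitai Criterion. Indeed, in the Gethner-Shapiro Criterion the map $S$ sends $Y_0$ into $Y_0$ and one asks for $S^{n_k}y\to 0$ together with $TSy=y$; specializing to $n_k=k$ recovers exactly the three hypotheses of the Kitai Criterion. Hence ``$T$ satisfies the Gethner-Shapiro Criterion along $(n)$'' means precisely ``$T$ satisfies the Kitai Criterion,'' and ``$T$ satisfies the Gethner-Shapiro Criterion along $(m_k)$ with $X_0=Y_0$'' means precisely that there are a dense set $X_0$ and a map $S:X_0\to X_0$ with $T^{m_k}x\to 0$, $S^{m_k}y\to 0$ and $TSy=y$ for all $x,y\in X_0$.

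First I would take $X=\ell^1(V)$ and $T$ the ``weighted shift on graph'' built in the proof of Theorem~\ref{kitai Thm} to satisfy assertion $(3)$ but not assertion $(2)$; this $X$ is infinite-dimensional and separable. In that proof $T$ was shown to satisfy the Kitai Criterion, via the explicit right inverse $S$ defined on $Y_0=c_{00}(V)$ together with a dense generalized kernel playing the role of $X_0$. By the identification above, this is exactly the Gethner-Shapiro Criterion along $(n)$, which gives the positive half of the statement.

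It then remains to exclude the criterion with $X_0=Y_0$ along every subsequence. But the same proof already establishes the stronger fact that for \emph{any} increasing sequence $(n_l)$ there is no dense set $X_0$ and map $S:X_0\to X_0$ with $T^{n_l}x\to 0$, $S^{n_l}y\to 0$ and $TSy=y$ for all $x,y\in X_0$. By the identification above this is exactly the assertion that $T$ fails the Gethner-Shapiro Criterion along $(m_k)$ with $X_0=Y_0$ for every increasing sequence $(m_k)$, which completes the argument.

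Since the genuinely hard step---the combinatorial estimate showing that a common domain $X_0=Y_0$ forces $S^{n_l}x$ not to tend to $0$, based on the behaviour of the weights $w_{n,k,j}$ along the branches $e^{(n,k)}_j$---was already carried out in the proof of Theorem~\ref{kitai Thm}, no real obstacle remains here. The only point demanding care is the bookkeeping identification of the two criteria: that ``Gethner-Shapiro along $(n)$'' unwinds to the Kitai hypotheses, and that ``Gethner-Shapiro along $(m_k)$ with $X_0=Y_0$'' unwinds precisely to the family of conditions already ruled out.
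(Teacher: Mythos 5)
Your proposal is correct and is exactly the paper's argument: the paper proves this theorem simply by pointing back to the weighted shift on the graph $V$ constructed in the proof of Theorem~\ref{kitai Thm}, which was shown there to satisfy the Kitai Criterion (i.e.\ the Gethner-Shapiro Criterion along $(n)$) while admitting, for \emph{every} increasing sequence $(n_l)$, no dense set $X_0$ and map $S:X_0\to X_0$ with $T^{n_l}x\to 0$, $S^{n_l}y\to 0$ and $TSy=y$. Your identification of the two criteria and your appeal to the already-established negative part are both accurate, so nothing is missing.
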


\subsection{Frequent Hypercyclicity Criterion}

The first version of the Frequent Hypercyclicity Criterion was given by Bayart and Grivaux in \cite{BaGr06} and was stated as follows.

\begin{theorem}[Frequent Hypercyclicity Criterion]
Let $X$ be a Banach space and $T\in L(X)$.
Suppose that there are a dense subset $X_0$ of $X$ and a map $S:X_0\to X_0$ such that for all $x\in X_0$, the following assertions hold:
\begin{enumerate}
\item $\sum_{n=1}^{\infty} \|T^nx\|<\infty$,
\item $\sum_{n=1}^{\infty} \|S^n x\|<\infty$,
\item $TS x=x$,
\end{enumerate}
then $T$ is frequently hypercyclic.
\end{theorem}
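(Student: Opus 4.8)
The plan is to deduce this statement directly from the unconditional Frequent Hypercyclicity Criterion already recorded as Theorem~\ref{unc-fr-S_n}, rather than re-running the underlying ergodic construction. The idea is that the single map $S$ appearing here plays the role of the family $(S_n)$ there through the choice $S_n := S^n$ (with $S_0 = I$), and that absolute summability is more than enough to supply the unconditional, uniform-in-$k$ convergence demanded by Theorem~\ref{unc-fr-S_n}.

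Concretely, I would fix $X_0$ and $S$ as in the hypotheses, set $S_n = S^n : X_0 \to X_0$, and first record the two telescoping identities valid on $X_0$: since $TSx = x$ for $x \in X_0$ and $S(X_0)\subseteq X_0$, one gets $T^m S^m = I$ on $X_0$ for every $m\ge 0$ by induction, whence
\[
T^k S^{k-n} x = T^n x \quad (0\le n\le k) \qquad\text{and}\qquad T^k S^{k+n} x = S^n x \quad (n\ge 0).
\]
These identities convert the four series appearing in Theorem~\ref{unc-fr-S_n} into series built solely from $(T^n x)_n$ and $(S^n x)_n$, which are governed by hypotheses (1) and (2).

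Then I would verify the four conditions in turn. Condition (4), $T^nS_nx = T^nS^nx = x \to x$, is immediate. For (1), the partial sums $\sum_{n=0}^k T^kS_{k-n}x = \sum_{n=0}^k T^n x$ are partial sums of $\sum_{n\ge 0}T^nx$, which converges absolutely by hypothesis (1); since $\sum_n\|T^nx\|<\infty$, the convergence is unconditional and the tail bound is independent of $k$, giving uniformity in $k$. For (2), $\sum_{n\ge 0} T^kS_{k+n}x = \sum_{n\ge 0}S^n x$, again absolutely convergent by hypothesis (2) with a $k$-independent tail, hence unconditionally and uniformly in $k$. Condition (3) is exactly $\sum_{n\ge 0}S^n x$, unconditionally convergent for the same reason. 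Having checked all hypotheses of Theorem~\ref{unc-fr-S_n}, I conclude that $T$ is frequently hypercyclic.

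The only point requiring care---hardly an obstacle---is observing that \emph{absolute} convergence of $\sum_n\|T^nx\|$ and $\sum_n\|S^nx\|$ automatically yields the \emph{unconditional} convergence, uniformly in $k$, required by Theorem~\ref{unc-fr-S_n}: after the telescoping identities are applied, the relevant tails $\sum_{n\ge M}\|T^nx\|$ and $\sum_{n\ge M}\|S^nx\|$ no longer depend on the outer index $k$, so uniformity is free and the reordering invariance is standard for absolutely convergent series. No ergodic-theoretic or combinatorial machinery is needed once the reduction to Theorem~\ref{unc-fr-S_n} via $S_n=S^n$ is in place.
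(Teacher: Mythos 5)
Your reduction is correct. The paper itself offers no proof of this theorem --- it is quoted verbatim as the classical criterion of Bayart and Grivaux \cite{BaGr06} --- so there is nothing internal to compare against; but your derivation from Theorem~\ref{unc-fr-S_n} is sound and is in fact the standard way the original criterion is recovered from the Bonilla--Grosse-Erdmann version \cite{BoGr}. The two identities $T^kS^{k-n}x=T^nx$ and $T^kS^{k+n}x=S^nx$ do hold on $X_0$: since $S$ maps $X_0$ into $X_0$ and $TSx=x$ there, induction gives $T^mS^m=I$ on $X_0$, and the hypotheses $\sum_n\|T^nx\|<\infty$ and $\sum_n\|S^nx\|<\infty$ then make all four conditions of Theorem~\ref{unc-fr-S_n} immediate, with uniformity in $k$ free because the relevant tails no longer involve $k$. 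Two harmless points worth stating explicitly if you write this up: the hypotheses index the series from $n=1$ while Theorem~\ref{unc-fr-S_n} starts at $n=0$, which only adds the single finite term $\|x\|$; and the verification of condition~(2) uses that $S^nx$ again lies in $X_0$ so that $T^kS^k$ may be cancelled against it --- this is exactly where the hypothesis $S:X_0\to X_0$ (rather than $S:X_0\to X$) is needed. Note also that this argument is essentially the same mechanism the paper does use in the proof of Proposition~\ref{chaosfhc}, where explicit right inverses $S_n$ are built and Theorem~\ref{unc-fr-S_n} is invoked in just this fashion.
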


However, this criterion is too strong to characterize frequent hypercyclicty. In particular, the frequent hypercyclicity criterion also implies that $T$ is chaotic and mixing. Another criterion allowing to get frequent hypercyclicity was given in \cite[Theorem 3]{BMPP16}. This second criterion can be stated for any notion of $\mathcal{A}$-hypercyclicity and thus for the notion of frequent hypercyclicity by considering $\A=\underline{D}$ where $\underline{D}$ is the family of sets with positive lower density.

\begin{theorem}[$\underline{D}$-Hypercyclicity Criterion]\label{Ahypc}
Let $X$ be a separable Banach space and $T\in \mathcal{L}(X)$. If there exist a dense subset $Y_0\subset X$, $S_n:Y_0\rightarrow X$, $n\ge 0$, and disjoint sets $A_k\in \underline{D}$, $k\ge 1$, such that for each $y\in Y_0$,
\begin{enumerate}
\item $\sum_{n\in A_k}\|S_n y\|$ converges uniformly in $k\ge 1$,
\item for any $k_0\ge 1$, any $\varepsilon>0$, there exists $k\ge k_0$ such that
for any $n\in \bigcup_{l\ge 1} A_l$, we have
\[\sum_{i\in A_k\backslash\{n\}}\|T^nS_iy\|\le \varepsilon,\]
and such that for any $\delta>0$, there exists $l_0\ge 1$ such that for any $n\in \bigcup_{l\ge l_0} A_l$, we have
\[\sum_{i\in A_k\backslash\{n\}}\|T^nS_iy\|\le \delta,\]
\item $\sup_{n\in A_k}\|T^nS_ny-y\|\to 0$ as $k\to\infty$,
\end{enumerate}
then $T$ is frequently hypercyclic.
\end{theorem}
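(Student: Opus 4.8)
The plan is to deduce frequent hypercyclicity by constructing a single vector $x$ together with a family of pairwise disjoint sets $C_j\in\underline{D}$ indexed by a suitable dense sequence. First I would fix a countable dense subset of $Y_0$ and list it as a sequence $(y_j)_{j\ge1}$ in which every element recurs for arbitrarily large indices $j$. Because removing finitely many integers leaves the lower density unchanged, it is enough to produce $x$ and disjoint $C_j\in\underline{D}$ with $\sup_{n\in C_j}\|T^nx-y_j\|\to0$ as $j\to\infty$: given a ball $B(z,\rho)$, picking an index $j$ large with $y_j$ close to $z$ yields $C_j\subset\{n:T^nx\in B(z,\rho)\}$, whence this visiting set belongs to $\underline{D}$, so $x$ is frequently hypercyclic.

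To build the $C_j$ I would extract them from the given family $(A_k)$. Using the uniform convergence in assertion (1), for each $j$ choose $N_j$ with $\sum_{n\in A_k,\,n>N_j}\|S_ny_j\|<2^{-j}$ for every $k$, and then select, inductively, strictly increasing indices $k_j$ and put $C_j:=A_{k_j}\cap(N_j,\infty)$. Truncating an initial segment preserves the lower density, so each $C_j\in\underline{D}$, and the $C_j$ are disjoint since the $A_k$ are. The tail bound gives $\sum_{n\in C_j}\|S_ny_j\|<2^{-j}$, so
\[
x:=\sum_{j\ge1}\ \sum_{n\in C_j}S_ny_j
\]
converges absolutely in $X$. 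At the $j$-th step I would take $k_j$ large enough that assertion (3) makes the diagonal term accurate, $\sup_{n\in C_j}\|T^nS_ny_j-y_j\|\le2^{-j}$, and that the first clause of assertion (2), applied to $y_j$ with target $2^{-j}$, forces $\sum_{i\in A_{k_j}\setminus\{n\}}\|T^nS_iy_j\|\le2^{-j}$ for \emph{every} time $n$.

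The heart of the argument is the off-diagonal estimate. For $n\in C_j$, disjointness of the $C_{j'}$ leaves a single diagonal term, and
\[
T^nx-y_j=\big(T^nS_ny_j-y_j\big)+\sum_{j'\ge1}\ \sum_{\substack{i\in C_{j'}\\(j',i)\ne(j,n)}}T^nS_iy_{j'}.
\]
The same-block ($j'=j$) and higher-block ($j'>j$) contributions are each at most $2^{-j'}$ by the first clause of (2) (whose bound is uniform in the time $n$), and they sum to a quantity of size $2^{-j}$. The lower-block contributions ($j'<j$) are the obstruction: the first clause only bounds their total by the constant $\sum_{j'<j}2^{-j'}$, which does not vanish. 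Here I would use the \emph{second} clause of (2): when constructing block $j$, further enlarge $k_j$ so that, for each of the finitely many $j'<j$, the interaction $\sum_{i\in A_{k_{j'}}\setminus\{n\}}\|T^nS_iy_{j'}\|$ of the fixed block $A_{k_{j'}}$ with any time $n\in A_{k_j}$ is at most $2^{-j'}2^{-j}$; this is possible precisely because $n$ then lies in the high block $A_{k_j}$ and clause (ii) makes a fixed block negligible from sufficiently high times. Summing over $j'<j$ bounds the lower-block part by $2^{-j}$ as well.

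Combining the three regimes gives $\sup_{n\in C_j}\|T^nx-y_j\|=O(2^{-j})\to0$, which is exactly what the reduction in the first paragraph requires, so $x$ is frequently hypercyclic. I expect the delicate point to be the simultaneous inductive choice of the indices $k_j$: each $k_j$ must meet assertion (3) and the first clause of (2) for $y_j$, be strictly larger than all previous $k_{j'}$, and at the same time exceed every threshold produced by the second clause of (2) for the earlier blocks $j'<j$ (with the correspondingly small targets $2^{-j'}2^{-j}$). It is only this second clause---yielding lower-block errors that genuinely vanish rather than merely being summable---that allows the total off-diagonal error to tend to $0$ and the scheme to close.
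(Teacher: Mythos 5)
Your proof is correct. Note that the paper itself does not prove this statement: it imports it from \cite[Theorem 3]{BMPP16} (specialized to $\mathcal{A}=\underline{D}$), and your argument --- enumerating a dense sequence with recurring terms, selecting blocks $C_j=A_{k_j}\cap(N_j,\infty)$ inductively so that (3) and the first clause of (2) control the diagonal, same-block and higher-block terms, and invoking the second clause of (2) to make the finitely many lower-block interactions vanish as $j\to\infty$ --- is essentially the standard proof given there, with all the delicate points (the simultaneous choice of $k_j$, the role of the second clause) correctly identified.
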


This criterion will have the advantage of characterizing when $B_T$ is frequently hypercyclic on $\ell^1(X,\mathbb{Z})$.

\begin{theorem}
Let $X$ be a separable Banach space and $T\in L(X)$.
Given the following assertions : 
\begin{enumerate}
\item $B_T$ is chaotic on $\ell^1(X,\mathbb{Z})$,
\item $T$ satisfies the Frequent Hypercyclicity Criterion,
\item $T$ satisfies the $\underline{D}$-Hypercyclicity Criterion,
\item $B_T$ is frequently hypercyclic on $\ell^1(X,\mathbb{Z})$,
\item $T$ is frequently hypercyclic on $X$,
\end{enumerate}
then 
\[(1)\Leftrightarrow (2) \Rightarrow (3) \Leftrightarrow (4) \Rightarrow (5).\]
and the other implications are false in general.
\end{theorem}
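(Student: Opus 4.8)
The plan is to follow the template of Theorem~\ref{kitai Thm}, replacing the ``$\to 0$'' conditions by absolute summability and the space $c_0(X,\mathbb{Z})$ by $\ell^1(X,\mathbb{Z})$, and to reduce everything to the chaos characterisation of Corollary~\ref{cor-chaos-uni} together with the $\underline{D}$-Hypercyclicity Criterion (Theorem~\ref{Ahypc}). I would first establish $(1)\Leftrightarrow(2)$. For $(2)\Rightarrow(1)$, if $T$ satisfies the Frequent Hypercyclicity Criterion with dense $X_0$ and $S:X_0\to X_0$, then for each $x\in X_0$ the two-sided orbit $(\dots,T^2x,Tx,x,Sx,S^2x,\dots)$ lies in $\ell^1(X,\mathbb{Z})$ precisely because $\sum_n\|T^nx\|<\infty$ and $\sum_n\|S^nx\|<\infty$, and Corollary~\ref{cor-chaos-uni} (with $\mathcal{D}=X_0$ and $x_n=S^{n-1}x$) gives that $B_T$ is chaotic on $\ell^1(X,\mathbb{Z})$. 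For $(1)\Rightarrow(2)$ I would argue exactly as in the implication $(1)\Rightarrow(2)$ of Theorem~\ref{kitai Thm}: chaos of $B_T$ yields a sequence of fixed points whose coordinates are dense in $X$, each $z=(z_m)_{m\in\mathbb{Z}}\in\ell^1(X,\mathbb{Z})$ satisfying $Tz_{m+1}=z_m$ and $\sum_m\|z_m\|<\infty$. Taking $X_0=Y_0$ to be the set of coordinates, using the same single-valued selection map to define $Sz_m=z_{m+1}$, the summability of $\sum_k\|T^kz_m\|=\sum_k\|z_{m-k}\|$ and of $\sum_k\|S^kz_m\|=\sum_k\|z_{m+k}\|$ together with $TSz_m=z_m$ yields the Frequent Hypercyclicity Criterion.

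The remaining easy steps are as follows. $(2)\Rightarrow(3)$ is the classical comparison between the two criteria: take $S_n=S^n$, so that $T^nS_ny=y$ exactly, and choose well-separated disjoint sets $A_k$ of positive lower density; this is the content of \cite{BMPP16}. The implication $(4)\Rightarrow(5)$ is immediate from the quasi-conjugacy of Proposition~\ref{conj-l^1}, since frequent hypercyclicity passes to factors. For $(3)\Rightarrow(4)$ I would \emph{lift} the criterion to $B_T$: given $Y_0$, $(S_n)$ and disjoint $(A_k)$ for $T$, set the dense subset to be $c_{00}((Y_0)_k,\mathbb{Z})$ and, writing $e_i(y)$ for the sequence with $y$ in coordinate $i$, define $\tilde S_n e_i(y)=e_{i+n}(S_ny)$ extended by linearity, keeping the \emph{same} sets $A_k$. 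Since $B_T^ne_{i+n}(S_ny)=e_i(T^nS_ny)$ and the $\ell^1$-norm of a one-point sequence equals the norm of its entry, the three conditions of Theorem~\ref{Ahypc} for $B_T$ reduce coordinatewise to those for $T$; hence $B_T$ satisfies the $\underline{D}$-Hypercyclicity Criterion on $\ell^1(X,\mathbb{Z})$ and is frequently hypercyclic.

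The main obstacle is the reverse implication $(4)\Rightarrow(3)$. Here I would start from a frequently hypercyclic vector $u=(u_k)_{k\in\mathbb{Z}}$ for $B_T$ and manufacture the data $(Y_0,(S_n),(A_k))$ for $T$. Fixing a dense sequence $(y_j)$ in $X$ and looking at the return sets to the targets $e_0(y_j)$, the identity $\|B_T^nu-e_0(y_j)\|_{\ell^1}=\|T^nu_n-y_j\|+\sum_{m\ne n}\|T^nu_m\|$ shows that along a set of positive lower density one has simultaneously $T^nu_n\approx y_j$ \emph{and} absolute smallness of the off-diagonal mass $\sum_{m\ne n}\|T^nu_m\|$; it is exactly here that the $\ell^1$ norm, rather than a general $\ell^p$ or $c_0$ norm, is indispensable. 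Turning these returns into \emph{disjoint} positive-density sets $A_k$ and into approximate inverses $S_n$ that work simultaneously for \emph{every} $y\in Y_0$ (and not only for the single target attached to the given return) is the delicate point. I expect to control the cross-sums $\sum_{i\in A_k\setminus\{n\}}\|T^nS_iy\|$ by combining these off-diagonal estimates with the summability tool of Bayart and Ruzsa (Corollary~9 of \cite{BaRu15}) already used in Theorem~\ref{fhcrusza}, and to obtain preimages $S_ny$ for arbitrary $y$ using that $(4)$ forces $T$ to be mixing by Corollaries~\ref{fhcmix} and~\ref{bilmix}.

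For the optimality claim I would exhibit two counterexamples breaking every implication outside the stated chain. The implication $(4)\Rightarrow(1)$, equivalently $(3)\Rightarrow(2)$, fails by Proposition~\ref{fhcnotchaos}, which provides a tree weighted shift $T$ on $\ell^1(V)$ with $B_T$ frequently hypercyclic but not chaotic on $\ell^1(X,\mathbb{Z})$. The implication $(5)\Rightarrow(4)$ fails because $(4)$ forces $T$ to be mixing (Corollary~\ref{fhcmix} together with Corollary~\ref{bilmix}), whereas there exist frequently hypercyclic operators that are not mixing, for instance a frequently hypercyclic weighted shift on $c_0$ that is not mixing \cite{BaGr07}; such a $T$ satisfies $(5)$ but not $(4)$.
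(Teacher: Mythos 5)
Most of your outline either matches the paper's proof or takes harmless variants of it: the equivalence $(1)\Leftrightarrow(2)$ is argued exactly as in the paper (via Proposition~\ref{chaos-uni} and the selection map on the coordinates of fixed points), your lifting argument for $(3)\Rightarrow(4)$ is the paper's, and your routes for $(2)\Rightarrow(3)$ (directly, with $S_n=S^n$ and well-separated sets $A_k$) and for $(4)\Rightarrow(5)$ (via the quasi-conjugacy of Proposition~\ref{conj-l^1}) are legitimate shortcuts; the paper instead obtains $(2)\Rightarrow(3)$ through the chain $(2)\Rightarrow(1)\Rightarrow(4)\Rightarrow(3)$ using Proposition~\ref{chaosfhc}, and obtains $(5)$ from $(3)$ via Theorem~\ref{Ahypc}. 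The two counterexamples you cite are the ones the paper uses.

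The genuine gap is $(4)\Rightarrow(3)$, which you correctly identify as the main obstacle but do not actually prove. Your sketch --- return sets of the frequently hypercyclic vector $u$ to the targets $e_0(y_j)$ for a dense sequence $(y_j)$, plus ``mixing gives preimages'' --- does not produce the data that the $\underline{D}$-Hypercyclicity Criterion requires: the sets $A_k$ must form a \emph{single} family of disjoint positive-lower-density sets working simultaneously for all $y\in Y_0$, whereas return sets to $e_0(y_j)$ depend on $j$; and mixing gives no control whatsoever on $\sum_{n\in A_k}\|S_ny\|$ or on the cross-sums $\sum_{i\in A_k\setminus\{n\}}\|T^nS_iy\|$. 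The paper's construction is different in exactly the place where yours is vague: it takes $B_k=N_{B_T}(x,B(C_kx,\varepsilon_k))\setminus\{0\}$, i.e.\ returns of the frequently hypercyclic vector $x$ to \emph{scaled copies of $x$ itself}, so that the separated sets $A_k\subset B_k$ extracted via \cite[Lemma 2.2]{MaMePu} are independent of $y$; then, for each $y$, it chooses a secondary sequence $(M_{j_k})$ with $B_T^{M_{j_k}}x$ close to $e_0(y)$, $M_{j_k}\le N_k$ and $\|B_T^{M_{j_k}}\|/C_k\to 0$, and sets $S_ny=\frac{1}{C_k}T^{M_{j_k}}x_{M_{j_k}+n}$ for $n\in A_k$. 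The three conditions of Theorem~\ref{Ahypc} then follow from $\|B_T^{n}x-C_kx\|\le\varepsilon_k$, the separation $|n-m|>N_k+N_j$, and the tail condition $C_l\sum_{|i|>N_l}\|x_i\|\le 1/l$. None of this is recoverable from your sketch, so as written the implication $(4)\Rightarrow(3)$ --- and hence the equivalence $(3)\Leftrightarrow(4)$ --- is unproven.
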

\begin{proof}
We proceed as in the proof of Theorem~\ref{kitai Thm} to show that $(1)\Leftrightarrow (2)$.\\
$(1)\Rightarrow (2)$. Let $\text{Fix}(B_T)$ be the set of fixed points for $B_T$. We know by Proposition~\ref{chaos-uni} that there exists a sequence $(z_n)_{n\in \mathbb{N}}\subset \text{Fix}(B_T)\backslash\{0\}$ such that $\{z_{n,m}\in X:n\in \mathbb{N}, m\in \mathbb{Z}\}$ is dense in $X$. We let $\phi:\mathbb{N}\times \mathbb{Z} \to \mathbb{N}\times \mathbb{Z}$ be given by $\phi(n,m)=(n,m)$ if $z_{n,m}=0$ and by $\phi(n,m)=(k,j)$ if $z_{n,m}\ne 0$ so that $z_{n,m}=z_{k,j}$, that for any $k'<k$, any $j'\in \mathbb{Z}$, $z_{n,m}\ne z_{k',j'}$ and that for any $j'>j$, $z_{n,m}\ne z_{k,j'}$. The map $\phi$ is well-defined because each sequence $z_n\in \ell^1(X,\mathbb{Z})$. We may conclude by considering $X_0=Y_0=\{z_{n,m}\in X:n\in \mathbb{N}, m\in \mathbb{Z}\}$, $S(0)=0$ and $Sz_{n,m}=z_{k,j+1}$ when $z_{n,m}\ne 0$ and  $\phi(n,m)=(k,j)$.\\
$(2)\Rightarrow (1)$. It suffices to remark that for every $k\in \mathbb{Z}$, for every $x_k\in X_0$, we have
\[(\cdots,T^2x_k,T x_k,x_k,S x_{k},S^2x_k\cdots)\in \ell^1(X,\mathbb{Z}).\]
We can then conclude by applying Proposition~\ref{chaos-uni}.\\
$(1) \Rightarrow (4)$ Proposition~\ref{chaosfhc}.\\
$(3)\Rightarrow (4)$ It suffices to remark that if $T$ satisfies the $\underline{D}$-Hypercyclicity Criterion for a dense set $Y_0$ in $X$, a sequence of maps $S_n:Y_0\to X$ and a sequence of sets $(A_k)$ of positive lower density then $B_T$ also satisfies the $\underline{D}$-Hypercyclicity Criterion with $Y'_0=c_{00}(Y_0,\mathbb{Z})$, $S'_n(y)=z$ where $z_{k+n}=S_n(y_k)$ for all $k\in \mathbb{Z}$ and the same family $(A_k)$.\\
$(3)\Rightarrow (5)$ Theorem~\ref{Ahypc}.\\
$(4)\Rightarrow (3)$ Let $x$ be a frequently hypercyclic vector for $B_T$, let $(C_k)_{k\ge 1}$ be an increasing sequence tending to infinity and $(\varepsilon_k)_{k\ge 1}$ a decreasing sequence tending to $0$. We consider $B_k=N_{B_T}(x,B(C_kx,\varepsilon_k))\backslash\{0\}$. Each set $B_k$ is thus a set of positive lower density. We also consider an increasing sequence of integers $(N_l)_{l\ge 1}$ such that
\begin{equation}
C_l\sum_{i\notin[-N_l,N_l]}\|x_i\|_X\le \frac{1}{l}.
\label{paramC}
\end{equation}
Thanks to \cite[Lemma 2.2]{MaMePu}, we know that we can find a family $(A_k)$ such that $A_k\subset B_k$, $A_k$ has positive lower density and for any $n\in A_k$, any  $m\in A_j$, if $n\ne m$ then 
\begin{equation}
|n-m|> N_k+N_j.
\label{sep}
\end{equation}

Let $y\in X$. We let $z\in \ell^1(X,\mathbb{Z})$ be given by $z_0=y$ and $z_k=0$ for all $k\ne 0$. Since $x$ is hypercyclic for $B_T$, there exists an increasing sequence $(M_j)_{j\ge 1}$ with $M_1=0$ such that $\|B^{M_j}_Tx-z\|\le \frac{\|x-z\|}{j}$ and a non-decreasing sequence $(j_k)_{k\ge 1}$ tending to $\infty$ such that
\[M_{j_k}\le N_k \quad \text{and}\quad \frac{\|B^{M_{j_k}}_T\|}{C_k}\to 0.\]
 We then let for any $n\in A_k$
\[S_n y=\frac{1}{C_k}T^{M_{j_k}} x_{M_{j_k}+n} \]
so that
\begin{align*}
\|T^nS_ny-y\|_X&=\|\frac{1}{C_k}T^{M_{j_k}+n}x_{M_{j_k}+n}-y\|_X\\
&\le \|\frac{1}{C_k}B^{M_{j_k}+n}_Tx-z\|\\
&\le 
\|\frac{1}{C_k}B^{M_{j_k}+n}_Tx-B^{M_{j_k}}_Tx\|+\|B^{M_{j_k}}_Tx-z\|\\
&\le 
\frac{\|B_T^{M_{j_k}}\|}{C_k} \|B^{n}_Tx-C_k x\|+ \frac{\|x-z\|}{j_k}\\
&\le \frac{\|B^{M_{j_k}}_T\|\varepsilon_k}{C_k}+ \frac{\|x-z\|}{j_k}
\to 0.
\end{align*}
Moreover, we have 
\[\sum_{n\in A_k}\|S_ny\|_X= \frac{1}{C_k} \sum_{n\in A_k} \|T^{M_{j_k}} x_{M_{j_k}+n}\|_X
\le \frac{1}{C_k}\|B^{M_{j_k}}_Tx\|\le \frac{\|B^{M_{j_k}}_T\|}{C_k}\|x\|.\]
Since $\frac{\|B^{M_{j_k}}_T\|}{C_k}$ tends to 0, we deduce that $\sum_{n\in A_k}\|S_ny\|_X$ converges uniformly in $k\ge 1$. Finally, if $n\in A_l$, we have
\begin{align*}
\sum_{i\in A_k\backslash\{n\}}\|T^nS_iy\|_X&=\sum_{i\in A_k\backslash\{n\}}\frac{1}{C_k}\|T^{M_{j_k}+n} x_{M_{j_k}+i}\|_X\\
&\le \frac{\|B_T^{M_{j_k}}\|}{C_k}\sum_{i\in A_k\backslash\{n\}}\|T^nx_{M_{j_k}+i}\|_X\\
&\le  \frac{\|B_T^{M_{j_k}}\|}{C_k}\sum_{i\notin[-N_l,N_l]}\|T^nx_{n+i}\|_X \quad \text{(by \eqref{sep} and $M_{j_k}\le N_k$)}\\
&\le  \frac{\|B_T^{M_{j_k}}\|}{C_k}\left(\sum_{i\notin[-N_l,N_l]}\|T^{n}x_{n+i}-C_lx_i\|_X
+ C_l\sum_{i\notin[-N_l,N_l]}\|x_i\|_X\right)\\
&\le  \frac{\|B_T^{M_{j_k}}\|}{C_k}\left(\|B_T^{n}x-C_lx\|
+ \frac{1}{l}\right) \quad \text{(by \eqref{paramC})}\\
&\le  \frac{\|B_T^{M_{j_k}}\|}{C_k}\left(\varepsilon_l+ \frac{1}{l} \right).
\end{align*}
Since $\frac{\|B_T^{M_{j_k}}\|}{C_k}$ and $\left(\varepsilon_l+ \frac{1}{l} \right)$ tend to $0$, we get the desired inequalities.\\

To conclude, we know that $(4)$ does not imply $(1)$ thanks to Proposition~\ref{fhcnotchaos} and that
(5) does not imply (4) by considering a frequently hypercyclic operator $T$ that is not mixing so that $B_T$ is not mixing on $\ell^1(X,\mathbb{Z})$ by Corollary~\ref{bilmix} and thus not frequently hypercyclic on $\ell^1(X,\mathbb{Z})$ by Corollary~\ref{fhcmix}.
\end{proof}

\end{document}